\DeclareMathOperator*{\bsq}{\scalerel*{\square}{\textstyle\bigcup}} 
\DeclareMathOperator*{\sq}{\scalerel*{\square}{\cup}}
\DeclareMathOperator{\Cyl}{Cyl}
\DeclareMathOperator{\law}{Law}
\DeclareMathOperator{\ord}{ord}
\DeclareMathOperator{\len}{len}
\newcommand{\ed}{\ \stackrel{d}{=} \ }
\newcommand{\FF}{\mbox{${\cal F}$}}
\newcommand{\eps}{\varepsilon}
\newcommand{\bZ}{{\bf Z}}
\newcommand{\bX}{{\bf X}}
\newcommand{\bzeta}{\mathbf{\zeta}}
\newcommand{\bY}{{\bf Y}}
\newcommand{\bx}{{\bf x}}
\newcommand{\by}{{\bf y}}
\newcommand{\MC}{mul\-ti\-pli\-ca\-tive co\-a\-le\-scent }
\newcommand{\cvd}{l^2_{\mbox{{\scriptsize $\searrow$}}}}
\newcommand{\IMC}{in\-ter\-act\-ing mul\-ti\-pli\-ca\-tive co\-a\-le\-scent}
\newcommand{\IMCap}{In\-ter\-act\-ing Mul\-ti\-pli\-ca\-tive Co\-a\-le\-scent}
\newcommand{\cvzero}{l_{\mbox{{\scriptsize $\searrow$}}}}
\newcommand{\ba}{{\bf a}}
\newcommand{\bA}{{\bf A}}
\newcommand{\R}{\mathbb{R}}
\newcommand{\I}{\mathbb{I}}
\newcommand{\N}{\mathbb{N}}
\newcommand{\p}{\mathbb{P}}
\newcommand{\E}{\mathbb{E}}
\newcommand{\MCv}{\mathrm{RMM}}
\newcommand{\G}{\mathrm{G}}
\newcommand{\expf}{\mathrm{Exp}^{\infty}(1)}
\newcommand{\glue}{\rho_1^m}
\newcommand{\comp}{{\rm C}(n,p_n,q_n)}
\begin{document}



\section{Introduction}
The \MC is a process constructed in \cite{Aldous:1997}.
The {\em Aldous' standard  \MC} is the scaling limit of near-critical Erd\H{o}s-R\'enyi graphs. 
The entrance boundary for the \MC was exhibited in \cite{Aldous:1998,Limic:1998:thesis}.

Informally, the  \MC takes values in the space of
collections
of blocks with mass (a number in $(0,\infty)$) and evolves according to the following dynamics:
\begin{equation}
\label{merge}
\begin{array}{c}
\mbox{ each pair of blocks of mass $x$ and $y$ merges at rate $xy$ }\\
\mbox{into a single block of mass $x+y$.}
\end{array}
\end{equation}
We will soon recall its connection to Erd\H{o}s-R\'enyi \cite{Erdos:1960} random graph, viewed in continuous time. 

Erd\H{o}s-R\'enyi-Stepanov (binomial) model is the prototype random graph.
Over the years more complicated random graphs (and networks) have been introduced, either by theoretical or applied mathematicians, and many aspects of them were rigorously studied in the meantime (see for example 
books by Bollob\'as~\cite{Bollobas:2001}, Durrett~\cite{Durrett:2010}, R. van der Hofstad~\cite{Hofstad:2017,Hofstad:2020}, or a survey by Bollob\'as and Riordan~\cite{Bollobas:2013}). 

In particular, Bollob\'as et al.~\cite{Bollobas:2007} also consider a family of random graphs called the {\em finite-type case} (see their Example 4.3) which includes the {\em Stochastic Block Model} (or SBM for short) as a special case. 
Here we study SBM in a novel near critical regime.
It seems plausible that SBM is not the only (but it is the most natural or elementary) family of random graphs exhibiting this phase-transition.

Let us briefly recall the basic SBM model with $m$ classes, frequently denoted by $G(n,p,q)$. 
The issued random graph has the set of vertices divided (in a deterministic way)
into $m$ subsets ({\em  classes} or blocks) of equal size (here we set this size to $n$ and therefore there are $mn$ vertices in total), and a random set of edges, where the edges are drawn independently, and the intra (resp.~inter) class edges are drawn with probability  $p$ (resp.~$q$).
We prefer to use the word ``class" in the present context, since in this area ``block'' is frequently used interchangeably with ``connected component''. 
We consider large graphs ($n$ will diverge to $\infty$), and the connectivity parameters $p$ and $q$ will depend on $n$.
There is one considerable difference in the connectivity parameter scaling (as $n$ diverges) in our work with respect to that in \cite{Bollobas:2007}, applied to finite-type graphs. 
Like in \cite{Bollobas:2007}, here $p_n$ scales inversely proportionally to $n$, but unlike in \cite{Bollobas:2007} our $q_n$ is of much smaller asymptotic order (notably it scales like $n^{-4/3}$).
Regimes where $q_n << p_n$ seem quite natural from the perspective of applications (in view of the formation of clusters in networks).

The scaling limit of Theorem~\ref{the_convergence_of_stochastic_model} in Section~\ref{sec:convergence_of_stochastic_block_model}, is to the best of our knowledge, a completely new stochastic object, of an independent interest to probability theory and applications.
We name it the {\em (standard) \IMC}.
Here two or more (initially independent) multiplicative coalescents interact through an analogue of the ``color and collapse'' mechanism (from \cite{Aldous:1998,Limic:1998:thesis}), which will be made more explicit in our work in progress \cite{Konarovskyi:EIMC:2020}.

The results of our study, proved rigorously for SBM, imply in some (imprecise) sense that for many ``typical'' SBM-alike large neworks there may exist another phase transition (for the formation of the giant component) when the intra-class connectivity is much (an order of magnitude) larger than the inter-class connectivity.
In Section~\ref{sec:concluding_remarks} we provide some further discussion.

We now introduce additional notation necessary for the presentation of our setting and results in the forthcoming sections.
While $l^2$ typically denotes the usual Hilbert
space of all sequences $\bx=(x_k)_{k\geq 1}$ with $\|\bx\|^2:=\sum_{ k=1 }^{ \infty }x_k^2<\infty $, we will henceforth assume without further mention that any $\bx \in l^2$ (of interest to us) also satisfies
$$ 
x_k  \in [0,\infty), \ \forall k\geq 1. 
$$
The subset of $l^2$ consisting of all $\bx$ with non-increasing component will be denoted by $\cvd$. 
Note that $\cvd$ is a Polish space with respect to the induced metric. 
In addition let $\cvzero$ be the set of all infinite vectors with non-increasing components in $[0,+\infty]$ (here the coordinates could take value $\infty$). 
Let $$\ord(\bx): [0,+\infty]^\infty \mapsto \cvzero$$ be the map which non-increasingly orders the components (coordinates) of an infinite vector (in case of ties, the ordering is specified in some natural way, not particularly important for our study). Note that $\ord$ cannot be well-defined (in the sense that all the coordinates of $\bx$ become listed in $\ord(\bx)$) unless for any finite $M$ there are 
at most finitely many coordinates of $\bx$ which are larger than $M$. We implicitly assume this property here, as it
will be true (almost surely) in the sequel.

The \MC is a process taking values in $\cvd$. 
If it starts from an initial value $\bx$ in $l^1$, it will reach the constant state $(\sum_i x_i, 0,\ldots)$ in finite time.
This is not considered an interesting behavior.
If it starts from an initial value $\bx\not \in l^2$, it will immediately collapse to
$(\infty, 0,\ldots)$, which is even less interesting.
Finally, if it starts from $\bx\in \cvd \setminus l^1$, it will stay forever after in this state space.
It is a Markov process with a generator that we prefer to describe in words: if the current state is $\bx$ then for any two different $i,j$ the jump to $\ord((x_i+x_j, \bx^{-i,-j}))$ happens at rate $x_ix_j$. Here $\bx^{-i,-j}$ is the vector obtained from $\bx$ by deleting the $i$th and the $j$th coordinate.

The \MC has interesting entrance laws which live at all times (or rather they are parametrized by $\R$).
The first and the most well-known such law is called the {\em Aldous standard (eternal) \MC}. 
Following the tradition set in \cite{Aldous:1997}, we denote it here by $(\bX^*(t),t\in \R)$.
The law of $\bX^*$ is closely linked (see \cite{Aldous:1997,Armendariz:2001,Broutin:2016}) to that of the following family of diffusions with non-constant drift
$$
\left\{\left\{B_s + ts - \frac{s^2}{2}, \ s \geq 0\right\},\ t\in \R\right\}.
$$
There are uncountably many different non-standard extreme eternal \MC entrance laws and they have been classified in \cite{Aldous:1998}, and linked further in \cite{Martin:2017,Limic:2019} to an analogous family of L\'evy-type processes.
We discuss and use these links in detail in~\cite{Konarovskyi:EIMC:2020}. 


The rest of the paper is organized as follows. 
In Section~\ref{S:RMM} 
we introduce the notion of restricted multiplicative merging $\MCv$, and we also state its basic properties. Section~\ref{sec:continuity_of_the_rmm_in_the_spatial_variable} is devoted to the continuity of $\MCv$ which is used for the proof of the scaling limit of stochastic block model in Section~\ref{sec:convergence_of_stochastic_block_model}. 
A brief discussion of phase transition for the stochastic block model and of the Markov property of the stochastic block model and the {\IMC} is done in Section~\ref{sec:concluding_remarks}. 

\subsection{Restricted multiplicative merging and first consequences}
\label{S:RMM}
A key initial observation in \cite{Aldous:1997} is that Erd\H{o}s-R\'enyi graph, viewed in continuous time, is a (finite-state space) \MC\!\!. 
We extend this construction to fit our purposes in Section \ref{sec:convergence_of_stochastic_block_model}.

For any triangular table (or matrix) $\ba=\{ a_{i,j}: i,j \in \N,\ i<j \}$ of non-negative real numbers, 
any symmetric relation $R$ on $\N$ and any $t \geq 0$ we define the {\em restricted multiplicative merging} (with respect to ($\ba,R$))  
\[
  \MCv_t(\cdot;\ba,R):l^2\to \cvzero 
\]
as follows: to an element $\bx \in l^2$ associate a labelled graph, denoted by $\G_t(\bx;\ba,R)$ 
\begin{itemize}
\item
with
the vertex set $\N$
and the edge set $\{\{i,j\} \in R: a_{i,j}\leq x_i x_j t \}$,
 where it is natural to extend the definition $a_{i,j} := a_{j,i}$ whenever $j<i$;
\item
 for each $i$ assign label $x_i$ to vertex $i$, this label we also call the {\em mass} of $i$;
\item
each connected component (in the usual graph theoretic sense) 
of  $\G_t(\bx;\ba,R)$ is then endowed with its {\em total mass} - the sum of labels or masses $x_\cdot$ over all of its vertices.
\end{itemize} 
Then $\MCv_t(\bx;\ba,R)$ is defined as the vector of the ordered masses of components of $G_t(\bx;\ba,R)$. 
Note that $\MCv_t(\cdot;\ba,R)$ is a deterministic map, and it is clearly measurable
since, for each $n$, $\MCv_t(\cdot;\ba,R):\R^n \to \cvzero$ (defined in the same way as above except with finitely many initial blocks) 
is continuous except on the set $\{\by: \exists i,j \in [n] \mbox{ s.t. } a_{i,j}= y_i y_j t\}$, a union of finitely many demi-hyperbolas (in particular a set of measure $0$), and moreover if $\pi^n(\bx):=(x_1,x_2,\ldots,x_n)$ then 
$\MCv_t(\pi^n(\cdot);\ba,R)$ converges in $\cvd$ to $\MCv_t(\cdot;\ba,R)$ point-wise.
 
Furthermore if $\bA=(A_{i,j})_{i,j}$ is a family of i.i.d.~exponential (rate $1$) random variables, and the relation $R^*$ is maximal (meaning $\{i,j\}\in R^*$ for all $i \neq j$) then $\MCv_t(\bx;\bA,R^*)$ has the law of the \MC  (from the introduction) started from $\ord(\bx)$ and evaluated at time $t$. 
In particular $\p\Big\{\MCv_t(\bx;\bA,R^*) \in \cvd\Big\} =1$. 
Even more is true: $\left\{\MCv_t(\bx;\bA,R^*),\ t \geq 0\right\}$ is equivalent to the Aldous graphical construction of the \MC  process started from $\ord(\bx)$ at time $0$.

It now seems natural to extend the above graphical construction with any infinite relation $R$ as the third parameter.
For our study a family of conveniently chosen relations will be particularly interesting, as explained in Section~\ref{sec:convergence_of_stochastic_block_model}.

For two elements $\bx=(x_k)_{k\geq 1},\by=(y_k)_{k\geq 1} \in l^2$ we will write $\bx\leq \by$ if $x_k\leq y_k$ for every $k\geq 1$. The following lemma is a trivial consequence of the definitions (and the inequality $x^2+y^2\leq (x+y)^2$, $x,y\geq 0$).

\begin{lemma} 
  \label{lem_norm_property_of_mc}
  For any two symmetric relations $R_1\subseteq R_2$, $\bx\leq \by$ and any two times $0\leq t_1\leq t_2$ we have that
  $\G_{t_1}(\bx;\ba,R_1) \subset \G_{t_2}(\bx;\ba,R_2)$ and
  \[
    \left\| \MCv_{t_1}(\bx;\ba,R_1) \right\|\leq \left\| \MCv_{t_2}(\by;\ba,R_2) \right\| ,
  \]
  where we extend the definition of  $\|\cdot \|$ to infinity on $\cvzero \setminus \cvd$.
\end{lemma}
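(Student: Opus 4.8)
The plan is to verify the two assertions of the lemma essentially directly from the graphical construction, treating the edge-set inclusion first and then deducing the norm inequality from it together with a subadditivity (superadditivity) argument for the component masses.

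\emph{Step 1: the graph inclusion.} First I would show $\G_{t_1}(\bx;\ba,R_1)\subset \G_{t_2}(\bx;\ba,R_2)$. Both graphs have the same vertex set $\N$, so it suffices to compare edge sets. An edge $\{i,j\}$ belongs to $\G_{t_1}(\bx;\ba,R_1)$ iff $\{i,j\}\in R_1$ and $a_{i,j}\le x_i x_j t_1$. Since $R_1\subseteq R_2$ the first condition is inherited, and since $a_{i,j}\ge 0$ and $0\le t_1\le t_2$ we have $a_{i,j}\le x_i x_j t_1\le x_i x_j t_2$, so the second condition is inherited as well; hence the edge lies in $\G_{t_2}(\bx;\ba,R_2)$. (Note this step uses $t_1\le t_2$ but, so far, only $\bx$ on both sides; the mass comparison $\bx\le\by$ enters in the next step.)

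\emph{Step 2: from more edges (and larger masses) to a larger norm.} Fix a connected component $C$ of $\G_{t_1}(\bx;\ba,R_1)$. By Step 1 all vertices of $C$ lie in a single connected component $C'$ of $\G_{t_2}(\bx;\ba,R_2)$, with $C\subseteq C'$. Since the masses assigned in $\G_{t_2}(\by;\ba,R_2)$ are the $y_i\ge x_i\ge 0$, the total mass of $C'$ under $\by$ is at least $\sum_{i\in C'} y_i\ge \sum_{i\in C} x_i$, i.e.\ at least the $\bx$-mass of $C$. Now group the components of $\G_{t_1}(\bx;\ba,R_1)$ according to which component of $\G_{t_2}(\by;\ba,R_2)$ contains them: for each component $C'$ of the larger graph, the components $C_1,\dots,C_r$ of the smaller graph contained in it satisfy $\sum_{\ell} (\sum_{i\in C_\ell} x_i) \le \sum_{i\in C'} y_i =: M_{C'}$ (the $x$-masses are nonnegative and the $C_\ell$ are disjoint). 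Writing $m_1,m_2,\dots$ for the (nonnegative) $\bx$-masses of the components of $\G_{t_1}(\bx;\ba,R_1)$ contained in a fixed $C'$, the elementary inequality $\sum_\ell m_\ell^2\le(\sum_\ell m_\ell)^2\le M_{C'}^2$ gives that the contribution of these components to $\|\MCv_{t_1}(\bx;\ba,R_1)\|^2$ is at most $M_{C'}^2$. Summing over all components $C'$ of $\G_{t_2}(\by;\ba,R_2)$ yields $\|\MCv_{t_1}(\bx;\ba,R_1)\|^2\le \sum_{C'} M_{C'}^2 = \|\MCv_{t_2}(\by;\ba,R_2)\|^2$, which is the claimed inequality (with the convention that the right-hand side is $+\infty$ if the ordered mass vector of $\G_{t_2}(\by;\ba,R_2)$ fails to be square-summable, making the bound vacuous in that case).

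\emph{Expected main obstacle.} There is no deep obstacle here — as the authors say, it is a trivial consequence of the definitions. The only points requiring a little care are bookkeeping ones: making sure the grouping of small-graph components inside a single large-graph component is legitimate when there are infinitely many components (which is fine, since all sums involved have nonnegative terms and one can pass to suprema over finite truncations), and handling the degenerate case where some component of the larger graph has infinite total mass, where the inequality holds trivially by the stated $+\infty$ convention for $\|\cdot\|$ on $\cvzero\setminus\cvd$. One should also keep in mind that the vertex sets are all equal to $\N$, so ``a component of the small graph is contained in a component of the large graph'' is literally true on the level of vertex sets, which is what makes the mass comparison immediate.
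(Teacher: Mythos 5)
Your proof is correct and follows exactly the route the paper intends: the edge-set inclusion is immediate from the definitions, and the norm inequality follows by grouping components of the smaller graph inside components of the larger one and applying the superadditivity $\sum_\ell m_\ell^2 \le \bigl(\sum_\ell m_\ell\bigr)^2$ for nonnegative masses, which is precisely the inequality $x^2+y^2\le(x+y)^2$ the authors cite. The paper offers no further detail, so nothing is missing from your write-up.
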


From now on we use notation $$\bA\stackrel{d}{\sim}\expf$$ to indicate a family of i.i.d.~exponential (rate $1$) random variables.

\begin{remark}
If $ \bx  \in \cvzero \setminus \cvd$, $\bA \stackrel{d}{\sim}\expf$, and $t>0$, it is not hard to check that almost surely
$\MCv_t(\bx;\bA,R^*) =(\infty,0,0,\ldots)$, however the above lemma makes sense also for stronger restrictions $R_1$ and $R_2$ where the $l^2$ norm of one or both of the restricted multiplicative mergers is finite.
\end{remark}

We can also observe the following.
\begin{lemma} 
  \label{L:mc_as_map_from_l2_to_l2}
  Let $\bA \stackrel{d}{\sim}\expf$ be defined on a probability space 
  $(\Omega,\FF,\p)$. 
  Then for every $\bx \in l^2$  we have that 
  $$
  \p\left\{\MCv_t(\bx;\bA,R) \in \cvd,\, \forall R \text{ symmetric relation and }  \forall t\geq 0\right\} =1.
  $$
\end{lemma}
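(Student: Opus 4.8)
The plan is to leverage the monotonicity of Lemma~\ref{lem_norm_property_of_mc} together with the fact, already recorded in the text, that for each fixed $t\ge 0$ one has $\p\{\MCv_t(\bx;\bA,R^*)\in\cvd\}=1$ (this uses that $\bx\in l^2$ means $\ord(\bx)\in\cvd$, and that the \MC issued from a state in $\cvd$ stays there). The first step is the elementary remark that a member of $\cvzero$ with finite $l^2$-norm is automatically a member of $\cvd$; consequently it suffices to prove that, almost surely,
$$
\bigl\|\MCv_t(\bx;\bA,R)\bigr\| < \infty \quad\text{for every symmetric relation } R \text{ and every } t\ge 0 .
$$

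The crux is then a single comparison. Since $R\subseteq R^*$ for every symmetric $R$, and $t\le q$ for any $q\ge t$, Lemma~\ref{lem_norm_property_of_mc} applied with $\bx\le\bx$, $R_1=R\subseteq R_2=R^*$ and $t_1=t\le t_2=q$ gives, for every realization of $\bA$,
$$
\bigl\|\MCv_t(\bx;\bA,R)\bigr\| \le \bigl\|\MCv_q(\bx;\bA,R^*)\bigr\| .
$$
Set $\Omega_0:=\bigcap_{q\in\mathbb{Q},\,q\ge 0}\bigl\{\MCv_q(\bx;\bA,R^*)\in\cvd\bigr\}$, a countable intersection of probability-one events, hence $\p(\Omega_0)=1$. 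On $\Omega_0$, given any symmetric $R$ and any real $t\ge 0$, pick a rational $q\ge t$; the displayed inequality then forces $\|\MCv_t(\bx;\bA,R)\|\le\|\MCv_q(\bx;\bA,R^*)\|<\infty$, so $\MCv_t(\bx;\bA,R)\in\cvd$. That is the whole argument: monotonicity collapses the uncountable family of events (over all $R$ and all $t$) onto a countable subfamily, each member of which is almost sure.

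The one point requiring a little care — what I regard as the main, though mild, obstacle — is to be sure that $\MCv_t(\bx;\bA,R)$ is a genuine element of $\cvzero$, so that ``finite $l^2$-norm $\Rightarrow$ in $\cvd$'' is legitimate, and that the comparison above is being used at the level of individual realizations of $\bA$ rather than only in distribution. Both are supplied by the finite-truncation description given just before Lemma~\ref{lem_norm_property_of_mc}: for each $n$ the quantity $\|\MCv_t(\pi^n(\bx);\ba,R)\|$ is a finite number, it is non-decreasing in $n$ (an induced-subgraph argument combined with $x^2+y^2\le(x+y)^2$), and $\MCv_t(\pi^n(\bx);\ba,R)$ converges in $\cvd$ to $\MCv_t(\bx;\ba,R)$ with $\|\MCv_t(\bx;\ba,R)\|=\sup_n\|\MCv_t(\pi^n(\bx);\ba,R)\|$; through these truncations the finiteness of $\|\MCv_q(\bx;\bA,R^*)\|$ on $\Omega_0$ transfers to $\|\MCv_t(\bx;\bA,R)\|$. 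The only genuinely external ingredient is the classical fact from \cite{Aldous:1997} that the \MC issued from a state in $\cvd$ remains in $\cvd$ at all later times, which underlies $\p\{\MCv_q(\bx;\bA,R^*)\in\cvd\}=1$; everything else is monotonicity plus a countable union.
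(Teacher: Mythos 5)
Your proof is correct and follows essentially the same route as the paper's (one-line) proof: dominate $\|\MCv_t(\bx;\bA,R)\|$ by $\|\MCv_q(\bx;\bA,R^*)\|$ via the pointwise monotonicity of Lemma~\ref{lem_norm_property_of_mc}, and use that the latter is a.s.\ finite because $\MCv_\cdot(\bx;\bA,R^*)$ is the graphical construction of a \MC started in $\cvd$. The only cosmetic difference is that you obtain the universality in $t$ by intersecting over rational times, whereas the paper invokes the a.s.\ statement at the level of the whole process $\{\MCv_t(\bx;\bA,R^*),\ t\geq 0\}$; both are valid.
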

\begin{proof}
Apply Lemma \ref{lem_norm_property_of_mc} together with the above made observations about the \MC  graphical construction.
\end{proof}

\begin{remark} 
Note that one cannot strengthen the statement of Lemma \ref{L:mc_as_map_from_l2_to_l2} so that the almost sure event stays universal over $\bx\in l^2$ even for a single relation $R$, if $R$ relates one $i\in \N$ (for example $i=1$) to infinitely many other numbers $i_l,\,l \in \N$. 
Indeed, we could define a random $\bX \in l^2$ as follows: 
$X_1=1$, 
$X_{i_1} = \sum_{m=1}^\infty \frac{1}{m} \I_{\left\{A_{1,i_1} \in \left(1/(m+1), 1/m\right]\right\}},$
and recursively for $l \geq 2$
$$
X_{i_l} = \sum_{m=1}^\infty \frac{1}{m} \I_{\left\{A_{1,i_l} \in \left(\frac{1}{m+1}, \frac{1}{m}\right], 
\frac{1}{m} \not\in \left\{ X_{i_{l-1}}, X_{i_{l-2}},\ldots, X_{i_1} \right\}\right\}}.
$$ 
Note that due to the elementary properties of i.i.d,~exponentials, with probability $1$ for each $m\geq 1$, $1/m$ appears exactly once in the above sequence almost surely, and there are infinitely many zeroes but this we can ignore. 
One can now conclude easily from the definition of $\MCv$ that $\MCv_1(\bX;\bA,R)$ contains a component of infinite mass, almost surely.
\end{remark}

Mimicking the notation of \cite{Limic:1998:thesis}, Section 2.1 
let us denote, for any (think large) $m\in \N$,  $\ba^{[m\uparrow]}:=\{a_{i+m,j+m}\}_{i<j}$, $\bx^{[m\uparrow]}=(x_{k+m})_{k\geq 1}$ and $R^{[m\uparrow]}=\{\{i-m,j-m\}:\, \{i,j \}\in R,\, i,j \geq m+1\}$. 
Note that $R^{[m\uparrow]}$ is not the restriction of  $R$ to $\{m+1,m+2,\ldots \}$, but rather a ``shift of $R$". 

\begin{lemma} 
  \label{lem_removing_of_elements_from_mc}
  For every $m\in\N$, $\bx\in \cvd$, $\ba$ and $R$ as specified above 
  \[
    \left\|\MCv_{t}(\bx;\ba,R)^{[m\uparrow]}\right\|\leq \left\|\MCv_{t}(\bx^{[m\uparrow]};\ba^{[m\uparrow]},R^{[m\uparrow]})\right\|,
  \]
  where we again allow value infinity on both sides of the inequality.
\end{lemma}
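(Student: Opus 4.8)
The plan is to square the inequality and then reduce it to an elementary comparison between the component masses of a graph and those of a vertex-induced subgraph of it.

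First I would square both sides — both norms lie in $[0,+\infty]$ — so the claim becomes $\|\MCv_t(\bx;\ba,R)^{[m\uparrow]}\|^2\le\|\MCv_t(\bx^{[m\uparrow]};\ba^{[m\uparrow]},R^{[m\uparrow]})\|^2$. Then I would identify the graph on the right: directly from the definitions, $\G_t(\bx^{[m\uparrow]};\ba^{[m\uparrow]},R^{[m\uparrow]})$ is exactly the vertex-induced subgraph of $H:=\G_t(\bx;\ba,R)$ on the vertices $\{m+1,m+2,\dots\}$, with vertex $k+m$ relabelled $k$ and its mass $x_{k+m}$ kept — because, for $k<l$, the pair $\{k,l\}$ is an edge on the left iff $\{k+m,l+m\}\in R$ and $a_{k+m,l+m}\le x_{k+m}x_{l+m}t$, i.e.\ iff $\{k+m,l+m\}$ is an edge of $H$. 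Writing $H'$ for this induced subgraph, letting $C_1\ge C_2\ge\cdots$ (resp.\ $D_1\ge D_2\ge\cdots$) be the ordered component masses of $H$ (resp.\ of $H'$), and noting $\MCv_t(\bx;\ba,R)^{[m\uparrow]}=(C_{k+m})_{k\ge1}$, the assertion reduces to $\sum_{k>m}C_k^2\le\sum_{k\ge1}D_k^2$, where either side is allowed to be $+\infty$.

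Next comes the combinatorial core. Call a component of $H$ \emph{touched} if it meets $\{1,\dots,m\}$ and \emph{untouched} otherwise; the vertices $1,\dots,m$ lie in at most $m$ components, so there are at most $m$ touched components, say $r\le m$ of them. Each untouched component $U$ is contained in $\{m+1,m+2,\dots\}$, is connected in $H'$ (all of its edges survive the restriction) and has no $H$-edge leaving it, hence is a component of $H'$ of the same total mass; therefore $\sum_{k\ge1}D_k^2\ge\sum_{U\,\mathrm{untouched}}(\mathrm{mass}\,U)^2$. On the left, the multiset of masses of the untouched components is $\{C_k\}_{k\ge1}$ with the $r$ values carried by the touched components removed; and deleting $r$ entries of a non-increasing sequence and re-sorting yields $C_1'\ge C_2'\ge\cdots$ with $C_j'\ge C_{j+r}$ for every $j$, since at most $r$ of the first $j+r$ entries can be among the deleted ones. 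Consequently
\[
\sum_{U\,\mathrm{untouched}}(\mathrm{mass}\,U)^2=\sum_{j\ge1}(C_j')^2\ \ge\ \sum_{j\ge1}C_{j+r}^2=\sum_{k>r}C_k^2\ \ge\ \sum_{k>m}C_k^2 ,
\]
the last inequality because $r\le m$ and all summands are non-negative. Chaining this with the bound on $\sum_k D_k^2$ gives the desired inequality; since every estimate holds in $[0,+\infty]$, no separate treatment of infinite norms is needed.

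The only genuine obstacle is obtaining the inequality in the right direction in the face of the super-additivity of $\|\cdot\|^2$ under splitting: under the restriction a touched component may break into several pieces, each of strictly smaller mass, and estimating these pieces one at a time would run the wrong way. The device above avoids accounting for the split-off pieces at all, instead ``charging'' the $r\le m$ touched components against the $m$ largest components of $H$, which are discarded on the left-hand side in any case; only the untouched components, which pass through the restriction unchanged, then remain, and their comparison is immediate.
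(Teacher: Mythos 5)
Your proof is correct and follows essentially the same route as the paper's: identify $\G_t(\bx^{[m\uparrow]};\ba^{[m\uparrow]},R^{[m\uparrow]})$ as the induced subgraph of $\G_t(\bx;\ba,R)$ on $\{m+1,m+2,\dots\}$, observe that at most $m$ components are touched by the removal of vertices $1,\dots,m$, and charge these against the $m$ largest components discarded on the left. Your write-up is in fact more explicit than the paper's (notably the rearrangement step $C_j'\ge C_{j+r}$), but the underlying argument is identical.
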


\begin{proof} %
Let us first look at how $\G_t\left(\bx^{[m\uparrow]};\ba^{[m\uparrow]},R^{[m\uparrow]}\right)$ 
can be constructed from
$\G_t(\bx;\ba,R)$.
The vertices with masses $x_1,\ldots,x_m$ are removed, as well as any of the edges in $\G_t(\bx;\ba,R)$ connecting any of these vertices to any other vertex.
The other vertices and edges in $\G_t(\bx;\ba,R)$ are kept in $\G_t\left(\bx^{[m\uparrow]};\ba^{[m\uparrow]},R^{[m\uparrow]}\right)$, it is precisely the shift of $R$ that ensures this (deterministic) coupling of the two graphs.

The number of connected components of $\G_t(\bx;\ba,R)$ changed in the just described procedure is $k_m\leq m$.
Let us assume that these components have indices $i_1,\ldots,i_{k_m}$.
On the other hand $\G_t(\bx;\ba,R)^{[m\uparrow]}$ is obtained from $\G_t(\bx;\ba,R)$ via removal of all the (vertices and edges) in the first (and largest) $m$ components of $\G_t(\bx;\ba,R)$.
The claim now follows since for all $j\leq k_m$ the $j$-th largest component of $\G_t(\bx;\ba,R)$ has mass larger or equal to that of its $i_j$-th largest component. 
\end{proof}

\section{Continuity of the RMM in the spatial variable}%
\label{sec:continuity_of_the_rmm_in_the_spatial_variable}

\subsection{Basic notation and formulation of the statement}%
\label{sub:basic_notation_and_formulation_of_the_statement}

In this section, we will state a type of continuity of the map $\MCv$ in the first variable.
This result will be used later for the proof of the SBM scaling limit.

\begin{proposition} 
  \label{pro_continuity_of_mc}
  Let $R$ be any symmetric relation on $\N$, $\bA \stackrel{d}{\sim}\expf$ and sequences $\bx^{(n)}\to \bx$ in $l^2$, $t_n \to t$ in $[0,+\infty)$ as $n \to \infty$. Define
  \begin{align*}
    \bZ^{(n)}(t)&=\MCv_{t}\left(\bx^{(n)};\bA,R\right),\quad n\geq 1,\\
    \bZ(t)&= \MCv_{t}(\bx;\bA,R).
  \end{align*}
  Then $\bZ^{(n)}(t_n) \to \bZ(t)$ in $\cvd$ in probability as $n \to \infty$.
\end{proposition}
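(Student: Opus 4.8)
\emph{Proposal.} Work on the single probability space carrying $\bA\stackrel{d}{\sim}\expf$, so that all of $\bZ^{(n)}(t_n)$ and $\bZ(t)$ are defined via the \emph{same} $\bA$; recall that the metric on $\cvd$ is the $l^2$-metric, so the claim is $\|\bZ^{(n)}(t_n)-\bZ(t)\|\to 0$ in probability. The plan is a two-scale truncation: for a large integer $K$, split each component-mass vector into the masses of the components that \emph{meet} $\{1,\dots,K\}$ (a finite, essentially finite-dimensional object) and the masses of the components that \emph{avoid} $\{1,\dots,K\}$ (a remainder with small $l^2$-norm, uniformly in $n$). Let $n\to\infty$ with $K$ fixed, then let $K\to\infty$.

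\textbf{The finite part.} Fix $K$. As recalled in Section~\ref{S:RMM}, the map $\MCv_{s}(\pi^{K}(\cdot);\ba,R):\R^{K}\times[0,\infty)\to\cvd$ is continuous off the finite union of demi-hyperbolas $\{(\by,s):\exists\, i<j\le K,\ a_{i,j}=y_iy_js\}$. Since $\bA$ has a continuous law, almost surely $A_{i,j}\neq x_ix_jt$ for every $i<j\le K$ with $x_ix_j>0$, while if $x_ix_jt=0$ then $A_{i,j}>0=x_ix_jt$ a.s.; in either case $(\pi^K(\bx),t)$ is a.s.\ a continuity point of $\MCv_{s}(\pi^{K}(\cdot);\bA,R)$, so $\MCv_{t_n}(\pi^{K}(\bx^{(n)});\bA,R)\to\MCv_{t}(\pi^{K}(\bx);\bA,R)$ a.s. The same a.e.-continuity argument handles any further quantity that is locally constant off those demi-hyperbolas, in particular the vector of masses of the components of $\G_{t_n}(\bx^{(n)};\bA,R)$ that meet $\{1,\dots,K\}$ once we also truncate the high vertices at a level $K'\gg K$.

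\textbf{The remainder.} The components of $\G_{t_n}(\bx^{(n)};\bA,R)$ avoiding $\{1,\dots,K\}$ are, by restriction, among the components of the graph on $\{K+1,K+2,\dots\}$ with edge set $\{\{i,j\}\in R,\ i,j>K:\ A_{i,j}\le x^{(n)}_ix^{(n)}_jt_n\}$, i.e.\ of $\G_{t_n}((\bx^{(n)})^{[K\uparrow]};\bA^{[K\uparrow]},R^{[K\uparrow]})$ after relabelling. Hence by Lemma~\ref{lem_norm_property_of_mc} (using $R^{[K\uparrow]}\subseteq R^{*}$ and that $\bA^{[K\uparrow]}\stackrel{d}{\sim}\expf$) their $l^2$-norm is at most $\|\MCv_{t_n}((\bx^{(n)})^{[K\uparrow]};\bA^{[K\uparrow]},R^{*})\|$, the $l^2$-norm of an ordinary \MC started from the small vector $(\bx^{(n)})^{[K\uparrow]}$. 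I will use the elementary bound
\[
\E\,\big\|\MCv_{s}(\by;\bA,R^{*})\big\|^{2}\ \le\ \frac{\|\by\|^{2}}{1-s\|\by\|^{2}},\qquad s\|\by\|^{2}<1,
\]
proved by writing $\|\MCv_{s}(\by;\bA,R^{*})\|^{2}=\sum_{i,j}y_iy_j\,\p\{i\leftrightarrow j\}$ and dominating $\p\{i\leftrightarrow j\}$ by the expected number of open paths, a geometric series in $s\|\by\|^{2}$ (the infinite case follows by truncation and Fatou, using the pointwise convergence $\MCv_s(\pi^m(\cdot);\ba,R)\to\MCv_s(\cdot;\ba,R)$ recalled above). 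Since $\bx^{(n)}\to\bx$ in $l^2$ one has $\delta_K:=\sup_n\sum_{j>K}(x^{(n)}_j)^2\to 0$ as $K\to\infty$, and with $T:=\sup_n t_n<\infty$, once $T\delta_K<1/2$ the bound gives $\E\|\MCv_{t_n}((\bx^{(n)})^{[K\uparrow]};\bA^{[K\uparrow]},R^{*})\|^{2}\le 2\delta_K$, uniformly in $n$; the same applies to $\bx,t$. By the rearrangement inequality $\|\ord(\bu)-\ord(\bv)\|\le\|\bu'-\bv'\|$ (valid for any rearrangements $\bu',\bv'$ of $\bu,\bv$), the contribution of the remainders to $\|\bZ^{(n)}(t_n)-\bZ(t)\|$ is at most, say, $2\sqrt{2\delta_K}+o_n(1)$ in probability.

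\textbf{Main obstacle.} The delicate point is that the masses of the components meeting $\{1,\dots,K\}$ are \emph{not} finite-dimensional functionals: such a component can absorb mass from arbitrarily high-indexed vertices, and two low vertices can be joined through a path in $\{K'+1,K'+2,\dots\}$. Consequently the edgewise-eventual convergence $\G_{t_n}(\bx^{(n)};\bA,R)\to\G_{t}(\bx;\bA,R)$ (which holds a.s.\ since each edge stabilises) does \emph{not} by itself yield convergence of the reordered mass vectors. What must be shown is that the mass of $\{v>K':\ v\leftrightarrow\{1,\dots,K\}\ \text{in}\ \G_{t_n}(\bx^{(n)};\bA,R)\}$, together with the mass of the low pieces that get merged only through vertices $>K'$, is small uniformly in $n$ once $K'$ is large. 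For the limit graph this holds because a fixed vertex $v$ is connected to $\{1,\dots,K\}$ only when $K$ is at least the minimal index of its component, so these masses tend to $0$ as $K'\to\infty$ by dominated convergence (the $\le K$-meeting components have finite total mass, a.s., by Lemma~\ref{L:mc_as_map_from_l2_to_l2}); to transfer this uniformly in $n$ one combines the edgewise convergence with the uniform $l^2$-tail control of the previous paragraph and a uniform-in-$n$ stochastic bound on $\|\MCv_{t_n}(\bx^{(n)};\bA,R)\|$ (itself $\le\|\MCv_{t_n}(\bx^{(n)};\bA,R^{*})\|$ by Lemma~\ref{lem_norm_property_of_mc}, hence tight by the $l^2$-continuity of the \MC), needed to bound the cross-terms produced when this migrating mass is attached to the low components. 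Assembling the three paragraphs — first $n\to\infty$ at fixed $K,K'$, then $K'\to\infty$, then $K\to\infty$ — gives $\|\bZ^{(n)}(t_n)-\bZ(t)\|\to0$ in probability. I expect this bookkeeping of the mass migrating across the truncation level, rather than the finite-dimensional convergence, to be the crux.
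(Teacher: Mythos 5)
Your overall architecture (truncate, handle the finite part by a.s.\ continuity off the demi-hyperbolas, control the $l^2$-tail of the components avoiding $\{1,\dots,K\}$ via a geometric-series second-moment bound) is sound, and the second paragraph indeed reproduces the content of Lemma~\ref{lem_uniform_estimation_of_tails} and Corollary~\ref{cor_uniform_estimation_of_tails_for_z}. But the proof stops exactly where the real work begins: your final paragraph correctly identifies the crux --- bounding, uniformly in $n$, the mass that the components meeting $\{1,\dots,K\}$ acquire through vertices of index $>K'$, and then converting that bound into $l^2$-closeness of the reordered vectors --- and then asserts rather than establishes it. As written, this is a genuine gap, not a routine verification.

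Two concrete ingredients are missing. First, a quantitative, $n$-uniform estimate for the ``migrating mass.'' The paper obtains it by conditionally reconstructing the full graph from the independent low and high pieces glued through fresh exponentials along the bipartite relation $(R^*)^{[\downarrow m\uparrow]}$ (Lemma~\ref{lem_glue_property}), running a path-counting bound specific to that bipartite structure (Lemmas~\ref{lem_estimate_for_probab_of_connected_comp_special_case} and~\ref{lem_estimation_of_probability_in_case_of_intermediate_edges}), and then paying for the resulting \emph{quadratic} dependence on $\|\bZ^{\leq m}(t)\|^2$ with a uniform fourth-moment bound (Lemma~\ref{lem_finiteness_of_expectation}, resting on Theorem~\ref{the_finiteness_of_fourth_moment} in the appendix, itself a nontrivial BKR-plus-martingale argument). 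Your sketch offers only a second-moment bound on the high tail plus an unspecified ``uniform-in-$n$ stochastic bound'' on the full norm; that does not control the cross-terms between the low-index and high-index mass carried by a single component, which is precisely why the paper needs the fourth moment. Second, even granting the norm estimate, you need a device that turns ``reattaching the high vertices increases the squared $l^2$-norm by at most $\eps$'' into ``the decreasingly ordered vectors are $O(\sqrt{\eps})$-close.'' The rearrangement inequality you quote requires exhibiting a near-optimal matching between the components of the two graphs, which you never construct; the paper instead invokes Lemma~17 of \cite{Aldous:1997}, which does exactly this for a configuration and its coarsening, and this is why the whole argument is organized around comparing $\bZ(t)$ with $\MCv_t(\bx^{[\downarrow m]};\bA,R)$ (a genuine refinement of $\bZ(t)$, handled a.s.\ finite-dimensionally by Lemma~\ref{lem_convergence_for_zero_teils}) rather than around the components meeting $[K]$. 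Without these two pieces your plan cannot be closed as stated.
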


\subsection{Auxiliary statements and proof of Proposition~\ref{pro_continuity_of_mc}}%
\label{sub:auxiliary_statements_and_proof_of_proposition_pro_continuity_of_mc}
In this section, a family of i.i.d.~exponential (rate 1) random variables $\bA=(A_{i,j})_{i,j}$ and a symmetric relation $R$ of $\N$ will be fixed.

\begin{lemma} 
  \label{lem_convergence_for_zero_teils}
  Let $\bx^{(n)}=\left(x_k^{(n)}\right)_{k\geq 1}$, $\bZ^{(n)}$, $\bx$, $\bZ$ be defined as in Proposition~\ref{pro_continuity_of_mc}. 
  Assume that there exists $m \in \N$ such that $x_k^{(n)}=0$ for all $k>m$ and $n\geq 1$.
  Then $\bZ^{(n)}(t_n) \to \bZ(t)$ in $\cvd$ a.s. as $n \to \infty$.
\end{lemma}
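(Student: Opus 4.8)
The plan is to reduce everything to a \emph{finite} graph problem, where the map $\MCv_t$ is essentially a continuous function of the finitely many masses and finitely many exponential clocks, and then argue that the (random) finite configuration is stable under the perturbations $\bx^{(n)} \to \bx$, $t_n \to t$. Since all the $\bx^{(n)}$ and $\bx$ are supported on the first $m$ coordinates, the graphs $\G_{t}(\bx^{(n)};\bA,R)$ and $\G_t(\bx;\bA,R)$ all have isolated vertices at every $i > m$ (an edge $\{i,j\}$ requires $x_i x_j t > 0$), so each of these graphs is the disjoint union of $\G_t^{[m]}$, the induced graph on $\{1,\dots,m\}$, together with a countable sequence of singletons carrying mass $0$. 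Hence $\bZ^{(n)}(t_n)$ (resp.\ $\bZ(t)$) is just the ordered vector of component masses of the finite graph $\G_{t_n}^{[m]}(\bx^{(n)};\bA,R)$ (resp.\ $\G_t^{[m]}(\bx;\bA,R)$), padded with zeros — and convergence in $\cvd$ of such zero-padded finite vectors is equivalent to convergence of the finite vectors in $\R^m$ (with the standard topology on ordered $m$-tuples).

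First I would fix the relevant randomness: let $\Omega_0$ be the almost sure event on which, for every pair $i<j$ with $\{i,j\}\in R$ and $i,j\le m$, one has $A_{i,j} \neq x_i x_j t$, and also $A_{i,j}\ne 0$ (automatic). This excludes only the "boundary" configurations, and since there are at most $\binom m2$ such pairs each giving a null event, $\p(\Omega_0)=1$. On $\Omega_0$ I claim that for all $n$ large enough the finite graph $\G_{t_n}^{[m]}(\bx^{(n)};\bA,R)$ \emph{coincides} with $\G_t^{[m]}(\bx;\bA,R)$ as a labelled graph up to the obvious relabelling of masses: indeed the edge set is determined by the finitely many strict inequalities $A_{i,j} < x_i x_j t$ resp.\ the reverse, and since $x_i^{(n)} x_j^{(n)} t_n \to x_i x_j t$ and we are away from the critical values $A_{i,j}$, each such inequality is preserved for $n$ large (how large depends on $\omega$, but that is fine for almost sure convergence). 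Once the edge sets agree, each connected component of the limit graph is also a component of the $n$-th graph, with total mass $\sum_{i\in C} x_i^{(n)} \to \sum_{i \in C} x_i$ by coordinatewise convergence of $\bx^{(n)}\to\bx$ (only $m$ coordinates matter). Therefore the ordered mass vectors converge in $\R^m$, hence $\bZ^{(n)}(t_n)\to\bZ(t)$ in $\cvd$.

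A minor point to handle carefully is the case $t=0$: then $\G_t^{[m]}$ has no edges at all, $\bZ(t)=\ord(\bx)$, and one still needs $A_{i,j} > x_i^{(n)} x_j^{(n)} t_n$ eventually, i.e.\ $x_i^{(n)} x_j^{(n)} t_n \to 0$, which holds; so the argument above goes through verbatim with the limiting edge set empty. One should also note that ties among the limiting component masses cause no problem, since $\ord$ is defined to break ties in a fixed way and convergence in $\cvd$ only sees the sorted values, which converge regardless.

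The main (and really only) obstacle is purely bookkeeping: making precise that "the induced finite graph eventually stabilises" on a full-measure event, and checking that stabilisation of the finite labelled graph plus coordinatewise mass convergence indeed forces $\cvd$-convergence of the zero-padded vectors. Both are elementary — the first is a finite union of null sets and a finite collection of eventually-true strict inequalities, the second is the fact that on the set of ordered vectors whose tails are identically zero beyond index $m$, the $\cvd$ metric is topologically equivalent to the Euclidean metric on $\R^m$ composed with sorting. There is no analytic difficulty here; the statement is genuinely the ``easy, finite'' special case that will later be bootstrapped (via the norm estimates of Lemmas~\ref{lem_norm_property_of_mc}–\ref{lem_removing_of_elements_from_mc}, presumably a truncation argument controlling the contribution of coordinates beyond $m$) to the full Proposition~\ref{pro_continuity_of_mc}.
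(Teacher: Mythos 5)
Your argument is correct and is exactly the paper's proof, only written out in full: the paper likewise observes that off the null event $\bigcup_{i,j}\{A_{i,j}=x_ix_jt\}$ the finitely many strict edge inequalities stabilise, so the graphical construction of $\bZ^{(n)}(t_n)$ converges to that of $\bZ(t)$. Nothing is missing; your careful handling of the $t=0$ case and of ties is a harmless elaboration of the same idea.
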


\begin{proof} %
  The proof trivially follows from the fact that $\p\left\{ A_{i,j}=x_ix_jt \right\}=0$ for all $i,j \in \N$.
 On the complement of $\cup_{i,j}\{ A_{i,j}=x_ix_jt \}$, the graphical construction of $\bZ^{(n)}(t_n)$ clearly converges to that of $\bZ(t)$.
\end{proof}

For two natural numbers $i,j$ we will write $i\sim j$ if the vertices $i$ and $j$ belong to the same connected component of the graph $\G_t(\bx;\bA,R)$. 
Note that $i\sim j$ iff there exists a finite path of edges 
\[
  i=i_0\leftrightarrow i_1 \leftrightarrow \dots \leftrightarrow i_l=j
\]
connecting $i$ and $j$.

Recall that $\|\cdot \|$ denotes the usual $l^2$ norm.
\begin{lemma} 
  \label{lem_estimation_for_probability_of_connection_of_two_enges}
  For every $\bx=(x_k)_{k\geq 1} \in l^2$ and $t \in \left(0,1/\|\bx\|^2\right)$
  \[
    \p\left\{ i \sim j\right\}\leq \frac{ x_ix_jt }{ 1-t \|\bx\|^2 }. 
  \]
\end{lemma}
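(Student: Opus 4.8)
The plan is to bound the probability that $i\sim j$ by a sum over all possible self-avoiding paths from $i$ to $j$ in $\G_t(\bx;\bA,R)$, and then to control that sum by a geometric-type series in the quantity $t\|\bx\|^2<1$. Concretely, if $i\sim j$ then there is a self-avoiding path $i=i_0\leftrightarrow i_1\leftrightarrow\cdots\leftrightarrow i_l=j$ with all $i_r$ distinct, and for each consecutive pair the edge $\{i_{r-1},i_r\}$ is present, which (conditional on nothing) has probability at most $\p\{A_{i_{r-1},i_r}\le x_{i_{r-1}}x_{i_r}t\}\le x_{i_{r-1}}x_{i_r}t$, using $1-e^{-u}\le u$. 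Since distinct edges along a self-avoiding path involve disjoint pairs of indices, the corresponding exponential variables are independent, so the probability of the whole path being present is at most $\prod_{r=1}^l x_{i_{r-1}}x_{i_r}t = t^l\, x_i x_j \prod_{r=1}^{l-1} x_{i_r}^2$.

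Next I would sum over all self-avoiding paths of each fixed length $l\ge 1$ and then over $l$. Dropping the self-avoidance constraint and the relation constraint $R$ (this only increases the bound) gives
\[
  \p\{i\sim j\}\ \le\ \sum_{l=1}^\infty\ \sum_{\substack{i_1,\ldots,i_{l-1}\in\N}} t^l\, x_i x_j\prod_{r=1}^{l-1} x_{i_r}^2
  \ =\ x_i x_j\sum_{l=1}^\infty t^l \Big(\sum_{k\ge 1} x_k^2\Big)^{l-1}
  \ =\ x_i x_j\, t\sum_{l=1}^\infty \big(t\|\bx\|^2\big)^{l-1}.
\]
Because $t\in(0,1/\|\bx\|^2)$ we have $t\|\bx\|^2<1$, so the geometric series converges to $1/(1-t\|\bx\|^2)$, yielding exactly $\p\{i\sim j\}\le x_i x_j t/(1-t\|\bx\|^2)$.

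A couple of points need a careful word. First, one should note the trivial case $i=j$ separately (the bound is then vacuous or one interprets the empty path), and handle $x_i=0$ or $x_j=0$ (the event $i\sim j$ via a nontrivial path has probability $0$, consistent with the bound). Second, one must be slightly careful about the union bound/summation being over a countable collection of paths, each of which is a measurable event, so that $\{i\sim j\}=\bigcup_{\text{paths }\gamma}\{\gamma\text{ present}\}$ and countable subadditivity applies; this is fine since $\N$ is countable. The independence claim used for a single path is the only genuinely probabilistic input: along a self-avoiding path the edges $\{i_{r-1},i_r\}$ are pairwise distinct as unordered pairs, hence correspond to distinct entries of the i.i.d.\ array $\bA$, so the events $\{A_{i_{r-1},i_r}\le x_{i_{r-1}}x_{i_r}t\}$ are independent.

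I expect the main (mild) obstacle to be purely bookkeeping: making the passage from ``there is a path'' to ``there is a self-avoiding path'' and organizing the summation over vertex-labelled self-avoiding paths so that, after relaxing to arbitrary (not necessarily distinct, not necessarily $R$-compatible) index sequences, the sum factorizes cleanly into the telescoping product $x_i x_j\prod x_{i_r}^2$ and then into the geometric series $\sum_l (t\|\bx\|^2)^{l-1}$. No delicate estimates are required beyond $1-e^{-u}\le u$ and the summability $\sum_k x_k^2=\|\bx\|^2<\infty$.
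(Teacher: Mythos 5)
Your proposal is correct and follows essentially the same route as the paper: a union bound over (self-avoiding) paths from $i$ to $j$, independence of the distinct edge variables along each path, the inequality $1-e^{-u}\le u$, and then relaxing to arbitrary index sequences so the sum factorizes into the geometric series $x_ix_jt\sum_{l\ge 1}(t\|\bx\|^2)^{l-1}=x_ix_jt/(1-t\|\bx\|^2)$. The paper's version is terser (it leaves the self-avoidance and independence points implicit), but the argument is the same.
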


\begin{proof} %
  The estimate can be obtained (using the inequality $1-e^{-x} \leq x,\ x\geq 0$) as follows
  \begin{align*}
    \p\left\{ i\sim j \right\}&\leq \sum_{ k=1 }^{ \infty } \left( \sum_{ i_1,\dots,i_{k-1} =1}^{\infty} \prod_{ l=1 }^{ k } \p\left\{ A_{i_{l-1},i_l}\leq x_{i_{l-1}}x_{i_l}t \right\} \right)\\
    &= \sum_{ k=1 }^{ \infty } \left( \sum_{ i_1,\dots,i_{k-1}=1 }^{\infty} \prod_{ l=1 }^{ k } \left(1-e^{-x_{i_{l-1}}x_{i_l}t}\right) \right)
  \end{align*} 
  \begin{align*}
    &\leq \sum_{ k=1 }^{ \infty } \left( \sum_{ i_1,\dots,i_{k-1}=1 }^{\infty} \prod_{ l=1 }^{ k } x_{i_{l-1}}x_{i_l}t \right)\leq x_ix_jt\left( \sum_{ k=1 }^{ \infty }  t^{k-1} \|\bx\|^{2k-2}  \right)\\
    &= \frac{ x_ix_jt }{ 1-t\|\bx\|^2 },
  \end{align*}
  where $i_0=i$ and $i_k=j$.
\end{proof}

We next show that the tails of $\bZ^{(n)}=\left( Z^{(n)}_k \right)_{k\geq 1}$ can be uniformly estimated.

\begin{lemma} 
  \label{lem_uniform_estimation_of_tails}
  Let $R$ be a symmetric relation on $\N$, $\bA \stackrel{d}{\sim}\expf$, and assume that $\bx^{(n)} \to \bx$ in $l^2$. Then for every $T>0$ and $\eps>0$ there exists $m \in \N$ such that for every $n\geq 1$ 
  \[
    \E \sup\limits_{ t \in [0,T] }\left\|\MCv_{t}\left(\left(\bx^{(n)}\right)^{[m\uparrow]};\bA^{[m\uparrow]},R^{[m\uparrow]}\right)\right\|^2< \eps.
  \]
\end{lemma}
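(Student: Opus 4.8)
The plan is to control the tail mass of the restricted multiplicative merging uniformly in $n$ by comparing it, via the monotonicity results already established, to the corresponding tail of the full \MC started from the (uniformly $l^2$-bounded) initial data, and then to invoke the known entrance-boundary / second-moment estimates for the \MC. First I would fix $T>0$ and $\eps>0$. Since $\bx^{(n)}\to\bx$ in $l^2$, the quantity $\sup_n\|\bx^{(n)}\|^2=:K$ is finite; by passing to ordered rearrangements (which only decreases the relevant $\|\cdot\|$ of the merger, and does not affect its law since $\bA$ is i.i.d.\ exchangeable) I may assume each $\bx^{(n)}\in\cvd$. By Lemma~\ref{lem_removing_of_elements_from_mc},
\[
  \left\|\MCv_{t}\left(\bx^{(n)};\bA,R\right)^{[m\uparrow]}\right\|
  \le \left\|\MCv_{t}\left(\left(\bx^{(n)}\right)^{[m\uparrow]};\bA^{[m\uparrow]},R^{[m\uparrow]}\right)\right\|,
\]
so it actually suffices to bound the right-hand side; but conversely, and more usefully, by Lemma~\ref{lem_norm_property_of_mc} (taking $R\subseteq R^*$, same $\bx$, same $t$) we also have
\[
  \left\|\MCv_{t}\left(\left(\bx^{(n)}\right)^{[m\uparrow]};\bA^{[m\uparrow]},R^{[m\uparrow]}\right)\right\|
  \le \left\|\MCv_{t}\left(\left(\bx^{(n)}\right)^{[m\uparrow]};\bA^{[m\uparrow]},R^{*}\right)\right\|,
\]
and the latter is the $l^2$ norm of an honest \MC started from $\left(\bx^{(n)}\right)^{[m\uparrow]}$. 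Thus the whole problem reduces to: uniformly over all $\bx'$ with $\|\bx'\|^2\le K$ and all $n$, the tail pieces $\left(\bx^{(n)}\right)^{[m\uparrow]}$ (whose norms $\to 0$ as $m\to\infty$, uniformly in $n$, by $l^2$ convergence) have \MC evolution with small expected squared norm on $[0,T]$.

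The second, analytic, step is the second-moment bound for the \MC. Recall the standard fact (Aldous, and the Smoluchowski-type identity) that if $\bY(t)$ is the \MC from $\bY(0)=\by$ then, writing $S_2(t)=\|\bY(t)\|^2$, one has $\frac{d}{dt}\E S_2(t)=\E S_3(t)$ and $\frac{d}{dt}\E S_3(t)\le 3\,\E S_2(t)S_3(t)$ etc.; more simply, $\E\|\bY(t)\|^2$ solves (is dominated by the solution of) a differential inequality blowing up only at a time of order $1/\|\by\|^2$. Concretely one shows that for $t\|\by\|^2\le 1/2$,
\[
  \E\|\bY(t)\|^2 \le \frac{\|\by\|^2}{1-t\|\by\|^2}\le 2\|\by\|^2 .
\]
This can be derived directly from Lemma~\ref{lem_estimation_for_probability_of_connection_of_two_enges}: indeed $\E\|\MCv_t(\by;\bA,R^*)\|^2=\sum_{i,j}y_iy_j\,\p\{i\sim j\}\le \frac{(\sum_i y_i^2)+\sum_{i\ne j}y_iy_j\cdot(\text{bound})}{\cdots}$; being careful with the diagonal, Lemma~\ref{lem_estimation_for_probability_of_connection_of_two_enges} gives $\p\{i\sim j\}\le \frac{y_iy_jt}{1-t\|\by\|^2}$ for $i\ne j$ and trivially $\p\{i\sim i\}=1$, whence $\E\|\bY(t)\|^2\le \|\by\|^2+\frac{t\,(\|\by\|^2)^2}{1-t\|\by\|^2}=\frac{\|\by\|^2}{1-t\|\by\|^2}$. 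Then I would choose $m$ large enough that $K_m:=\sup_n\|(\bx^{(n)})^{[m\uparrow]}\|^2$ satisfies $T K_m\le 1/2$ (possible since $K_m\downarrow 0$, using $l^2$ convergence to get uniformity over $n$), giving the pointwise-in-$t$ bound $\E\|\cdots\|^2\le 2K_m$ for all $t\le T$, uniformly in $n$.

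The remaining, and I expect main, obstacle is upgrading the pointwise-in-$t$ expectation bound to a bound on $\E\sup_{t\le T}\|\cdots\|^2$. The process $t\mapsto\|\bY(t)\|^2$ is a.s.\ nondecreasing (each merge strictly increases the squared norm, since $(a+b)^2>a^2+b^2$), so in fact $\sup_{t\in[0,T]}\|\bY(t)\|^2=\|\bY(T)\|^2$ (or its left limit), and monotone convergence / the $\cvd$-valued right-continuity handles the endpoint. Hence $\E\sup_{t\le T}\|\bY(t)\|^2=\E\|\bY(T)\|^2\le 2K_m$, and choosing $m$ also large enough that $2K_m<\eps$ finishes the proof. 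So concretely the steps are: (i) reduce to $\bx^{(n)}\in\cvd$ and dominate $R$-merging by $R^*$-merging (Lemmas~\ref{lem_norm_property_of_mc}, \ref{lem_removing_of_elements_from_mc}); (ii) note $\|\bY(\cdot)\|^2$ is nondecreasing so the sup is attained at $T$; (iii) bound $\E\|\bY(T)\|^2$ via Lemma~\ref{lem_estimation_for_probability_of_connection_of_two_enges} on the tail initial data $(\bx^{(n)})^{[m\uparrow]}$; (iv) use $l^2$-convergence of $\bx^{(n)}$ to choose $m$ uniformly in $n$ making both $TK_m\le 1/2$ and $2K_m<\eps$. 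The only subtlety to watch is that the norm may be $+\infty$ a priori on $\cvzero\setminus\cvd$, but Lemma~\ref{L:mc_as_map_from_l2_to_l2} rules this out almost surely for every $R$ and $t$ simultaneously, so all the expectations above are of genuinely finite-valued random variables and the monotonicity argument is legitimate.
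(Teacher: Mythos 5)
Your proposal is correct and follows essentially the same route as the paper: reduce the supremum over $t\in[0,T]$ and the restriction $R$ to the single random variable $\left\|\MCv_{T}\left(\left(\bx^{(n)}\right)^{[m\uparrow]};\bA^{[m\uparrow]},R^{*}\right)\right\|^2$ via the monotonicity of Lemma~\ref{lem_norm_property_of_mc}, bound its expectation by $\left\|\by\right\|^2+\frac{T\left\|\by\right\|^4}{1-T\left\|\by\right\|^2}$ using the two-point connection estimate of Lemma~\ref{lem_estimation_for_probability_of_connection_of_two_enges}, and use the $l^2$-convergence of $\bx^{(n)}$ to make $\sup_n\left\|\left(\bx^{(n)}\right)^{[m\uparrow]}\right\|$ uniformly small. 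The detours through reordering, Lemma~\ref{lem_removing_of_elements_from_mc}, and the Smoluchowski identities are unnecessary but harmless.
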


\begin{proof} %
  We first estimate for $t \in [0,T]$
  \begin{align*}
     \left\|\MCv_{t}\left(\left(\bx^{(n)}\right)^{[m\uparrow]};\bA^{[m\uparrow]},R^{[m\uparrow]}\right)\right\|^2&\leq 
     \left\|\MCv_{T}\left(\left(\bx^{(n)}\right)^{[m\uparrow]};\bA^{[m\uparrow]},R^*\right)\right\|^2,
  \end{align*}
  by Lemma~\ref{lem_norm_property_of_mc}, where $R^*$ denotes the maximal symmetric relation on $\N$.

  Since $\bx^{(n)} \to \bx$ in $l^2$ as $n \to \infty$, 
  \[
  \sup\limits_{ n\geq 1 }\left\|\left(\bx^{(n)}\right)^{[m\uparrow]}\right\| \to 0\quad \mbox{as} \quad m \to \infty.  
  \]
  So, there exists $m \in \N$ such that 
  \[
    \left\|\left(\bx^{(n)}\right)^{[m\uparrow]}\right\|^2+\frac{\left\|\left(\bx^{(n)}\right)^{[m\uparrow]}\right\|^4T}{1-T\left\|\left(\bx^{(n)}\right)^{[m\uparrow]}\right\|^2} <\eps
  \]
  for all $n\geq 1$. 
  We fix $n$ and write $i\sim j$ if vertices $i$ and $j$ belong to the same connected component of the graph $\G_T\left( \left( \bx^{(n)} \right)^{[m\uparrow]};\bA^{[m\uparrow]},R^* \right)$. 
  Then
  \begin{align*}
    \E\Big\|\MCv_{T}\Big(\left(\bx^{(n)}\right)^{[m\uparrow]}&;\bA^{[m\uparrow]},R^*\Big)\Big\|^2= \sum_{ i=m+1 }^{ \infty } \left( x_i^{(n)} \right)^2+2 \sum_{ m<i<j }^{ \infty } x_i^{(n)}x_j^{(n)}\E \I_{\left\{ i\sim j \right\}} 
    \\ 
 \text{(Lemma \ref{lem_estimation_for_probability_of_connection_of_two_enges}) \ } & \leq \sum_{ i=m+1 }^{ \infty } \left( x_i^{(n)} \right)^2+2 \sum_{ m<i<j }^{ \infty } \frac{\left(x_i^{(n)}x_j^{(n)}\right)^2T}{1-T\left\|\left(\bx^{(n)}\right)^{[m\uparrow]}\right\|^2}\\
 &\leq \left\|\left(\bx^{(n)}\right)^{[m\uparrow]}\right\|^2+\frac{\left\|\left(\bx^{(n)}\right)^{[m\uparrow]}\right\|^4T}{1-T\left\|\left(\bx^{(n)}\right)^{[m\uparrow]}\right\|^2}<\eps.
  \end{align*}
\end{proof}

\begin{corollary} 
  \label{cor_uniform_estimation_of_tails_for_z}
  Under the assumptions of Proposition~\ref{pro_continuity_of_mc}, for every $T>0$ and $\eps>0$ there exists $m \in \N$ such that for every $n\geq 1$ 
  \[
    \E \sup\limits_{ t \in [0,T] }\sum_{ k=m+1 }^{ \infty } \left( Z^{(n)}_k(t) \right)^2< \eps.
  \]
\end{corollary}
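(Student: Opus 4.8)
\emph{Plan.} The corollary is, as its name indicates, an immediate consequence of Lemma~\ref{lem_removing_of_elements_from_mc} and Lemma~\ref{lem_uniform_estimation_of_tails}; the proof will have three short steps: a pathwise identification of the tail sum, a pathwise domination, and an appeal to the uniform tail bound just established.

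First I would restrict to the almost sure event of Lemma~\ref{L:mc_as_map_from_l2_to_l2}, on which $\MCv_t(\bx^{(n)};\bA,R)\in\cvd$ for all $n\ge1$ and all $t\ge0$. There, for every $t\ge0$ and $m\in\N$ we have the trivial identity
\[
  \sum_{k=m+1}^{\infty}\left(Z^{(n)}_k(t)\right)^2=\left\|\MCv_t\left(\bx^{(n)};\bA,R\right)^{[m\uparrow]}\right\|^2 ,
\]
since $\bZ^{(n)}(t)^{[m\uparrow]}$ arises from the non-increasing vector $\bZ^{(n)}(t)$ by deleting its $m$ largest coordinates. Applying next Lemma~\ref{lem_removing_of_elements_from_mc} with the realised triangular table $\ba=\bA(\omega)$ (the statement is available for any $\bx\in l^2$, as its proof only relies on component masses being listed in non-increasing order), I obtain for each such $t$ and $m$
\[
  \left\|\MCv_t\left(\bx^{(n)};\bA,R\right)^{[m\uparrow]}\right\|^2\le\left\|\MCv_t\left(\left(\bx^{(n)}\right)^{[m\uparrow]};\bA^{[m\uparrow]},R^{[m\uparrow]}\right)\right\|^2 .
\]
Because this holds for \emph{every} $t$ on one common almost sure event, taking $\sup_{t\in[0,T]}$ and then expectations preserves the inequality, and the right-hand side is precisely the quantity controlled by Lemma~\ref{lem_uniform_estimation_of_tails}. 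Hence, given $T>0$ and $\eps>0$, choosing $m\in\N$ as in that lemma yields
\[
  \E\sup_{t\in[0,T]}\sum_{k=m+1}^{\infty}\left(Z^{(n)}_k(t)\right)^2\le\E\sup_{t\in[0,T]}\left\|\MCv_t\left(\left(\bx^{(n)}\right)^{[m\uparrow]};\bA^{[m\uparrow]},R^{[m\uparrow]}\right)\right\|^2<\eps
\]
for every $n\ge1$, as required.

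The one point deserving a line of justification --- and the mildest possible ``obstacle'' --- is the measurability of $\omega\mapsto\sup_{t\in[0,T]}\sum_{k>m}(Z^{(n)}_k(t))^2$. This follows because, with $\bx^{(n)}\in l^2$, almost surely only finitely many edges of $\G_t(\bx^{(n)};\bA,R)$ have appeared by time $T$ (their expected number is at most $\tfrac{T}{2}\|\bx^{(n)}\|^4$), so $t\mapsto\sum_{k>m}(Z^{(n)}_k(t))^2$ is a.s.\ a step function on $[0,T]$ whose running supremum is a maximum over finitely many measurable quantities; the corresponding supremum on the right-hand side is moreover non-decreasing in $t$ by Lemma~\ref{lem_norm_property_of_mc}, hence simply its value at $t=T$. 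Beyond this bookkeeping nothing remains: the substance is entirely in Lemmas~\ref{lem_removing_of_elements_from_mc} and~\ref{lem_uniform_estimation_of_tails}, and the proof only has to note that $\sup_{t\in[0,T]}$ respects the pathwise domination.
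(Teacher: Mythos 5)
Your argument is correct and coincides with the paper's own proof: the identity $\sum_{k>m}(Z^{(n)}_k(t))^2=\|\MCv_t(\bx^{(n)};\bA,R)^{[m\uparrow]}\|^2$, the pathwise domination from Lemma~\ref{lem_removing_of_elements_from_mc}, and then Lemma~\ref{lem_uniform_estimation_of_tails} are exactly the steps the authors use. The extra measurability remark is harmless but not needed beyond what the paper already takes for granted.
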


\begin{proof} %
  The statement follows directly from Lemma~\ref{lem_uniform_estimation_of_tails}
  and Lemma~\ref{lem_removing_of_elements_from_mc}, since
    \begin{align*}
    \sum_{ k=m+1 }^{ \infty } \left( Z^{(n)}_k(t) \right)^2&=  \left\|\MCv_{t}\left(\bx^{(n)};\bA,R\right)^{[m\uparrow]}\right\|^2\\
    &\leq \left\|\MCv_{t}\left(\left(\bx^{(n)}\right)^{[m\uparrow]};\bA^{[m\uparrow]},R^{[m\uparrow]}\right)\right\|^2.
  \end{align*}
\end{proof}

For $m \in \N$ we introduce the notation 
\begin{equation}
\label{E:Rupdown}
  R^{[\downarrow m\uparrow]}=R \cap \left\{ \{i,j\}: i\leq m,\ j>m \right\}
\end{equation}
and 
\[
  \bx^{[\downarrow m]}=\left(x_k \I_{\left\{ k\leq m \right\}}\right)_{k\geq 1}.
\]

We next prove an analog of Lemma~\ref{lem_estimation_for_probability_of_connection_of_two_enges} in a special case where $R=R^{[\downarrow m \uparrow]}$.  
From now on we will write $i\sim_m j$ for two natural numbers $i,j$ if the vertices $i$ and $j$ belong to the same connected component of the graph $\G_t(\bx;\bA,R^{[\downarrow m \uparrow]})$. 
Note that $t$ is omitted from the notation.
For $i,j\leq m$, $i\sim_m j$ if and only if there exists a finite path of edges in $\G_t(\bx;\bA,R^{[\downarrow m \uparrow]})$
\[
  i=i_0\leftrightarrow j_1 \leftrightarrow i_1\leftrightarrow \dots \leftrightarrow j_k\leftrightarrow i_k=j,
\]
 where $i_1,\dots,i_k \leq m$ and $j_1,\dots,j_k>m$ are all different vertices. 

\begin{lemma} 
  \label{lem_estimate_for_probab_of_connected_comp_special_case}
  For every $\bx=(x_k)_{k\geq 1} \in l^2$, $m\geq 1$ and $t \in \left(0,1/(\|\bx^{[\downarrow m]}\|\|\bx^{[m \uparrow]}\|)\right)$
  \[
    \p\left\{ i \sim_m j\right\}\leq \frac{ x_ix_j\, \kappa_{t,i,j}^m\left( \left\|\bx^{[\downarrow m]}\right\|,\left\|\bx^{[m \uparrow]}\right\| \right)}{1-t^2 \left\|\bx^{[\downarrow m]}\right\|^2 \left\|\bx^{[m \uparrow]}\right\|^2},
  \]
  where 
  \[
    \kappa_{t,i,j}^m(x,y)=
    \begin{cases}
      t^2y^2, & \mbox{ if }\ i,j\leq m, \\
      t^2x^2, & \mbox{ if }\ i,j>m,\\
      t, & \mbox{otherwise}.
    \end{cases}
  \]
\end{lemma}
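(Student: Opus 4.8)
The plan is to mimic the proof of Lemma~\ref{lem_estimation_for_probability_of_connection_of_two_enges}, summing over all possible self-avoiding paths connecting $i$ and $j$ in $\G_t(\bx;\bA,R^{[\downarrow m\uparrow]})$, but now exploiting the bipartite structure of $R^{[\downarrow m\uparrow]}$: every such path alternates between vertices $\leq m$ and vertices $>m$. First I would fix the parity: if $i,j\leq m$ a connecting path has the form $i=i_0\leftrightarrow j_1\leftrightarrow i_1\leftrightarrow\cdots\leftrightarrow j_k\leftrightarrow i_k=j$ with $i_1,\dots,i_{k-1}\leq m$ and $j_1,\dots,j_k>m$ (so $2k$ edges, $k\geq 1$); if $i,j>m$ a path has the form $i=j_0\leftrightarrow i_1\leftrightarrow\cdots\leftrightarrow i_k\leftrightarrow j_k=j$ with $i_1,\dots,i_k\leq m$ and $j_1,\dots,j_{k-1}>m$ (again $2k$ edges); and if $i\leq m<j$ (or vice versa) a path has an odd number $2k+1$ of edges, $k\geq 0$. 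In each case the union bound over paths together with independence of the $A_{\cdot,\cdot}$ and the bound $1-e^{-x}\leq x$ gives, for a path with edge set $\{\{u_0,u_1\},\dots,\{u_{L-1},u_L\}\}$, the weight $\prod_{l=1}^L x_{u_{l-1}}x_{u_l}t = t^L x_i x_j \prod_{\text{internal }u} x_u^2$.

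Next I would carry out the summation over the internal vertices, keeping track of which sums run over $\{1,\dots,m\}$ and which over $\{m+1,m+2,\dots\}$. Writing $X:=\|\bx^{[\downarrow m]}\|$ and $Y:=\|\bx^{[m\uparrow]}\|$, summing an internal vertex constrained to be $\leq m$ contributes a factor $\leq X^2$, and one constrained to be $>m$ contributes $\leq Y^2$ (here I would note that the true sum is over \emph{distinct} vertices, so the unrestricted sum is only an upper bound, exactly as in the earlier lemma). In the case $i,j\leq m$: a $2k$-edge path has $k$ internal vertices $>m$ (namely $j_1,\dots,j_k$) and $k-1$ internal vertices $\leq m$, so its total contribution is bounded by $t^{2k}x_ix_j\,X^{2(k-1)}Y^{2k} = x_ix_j\,t^2Y^2\cdot (t^2X^2Y^2)^{k-1}$; summing over $k\geq 1$ yields $x_ix_j\,t^2Y^2/(1-t^2X^2Y^2)$, which is precisely the claimed bound with $\kappa^m_{t,i,j}=t^2Y^2$. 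In the case $i,j>m$: a $2k$-edge path has $k$ internal vertices $\leq m$ and $k-1$ internal vertices $>m$, giving $x_ix_j\,t^2X^2\cdot(t^2X^2Y^2)^{k-1}$, summing to $x_ix_j\,t^2X^2/(1-t^2X^2Y^2)$, matching $\kappa^m_{t,i,j}=t^2X^2$. In the mixed case $i\leq m<j$: a $(2k+1)$-edge path has $k$ internal vertices $\leq m$ and $k$ internal vertices $>m$, giving $x_ix_j\,t\cdot(t^2X^2Y^2)^{k}$, summing over $k\geq 0$ to $x_ix_j\,t/(1-t^2X^2Y^2)$, matching $\kappa^m_{t,i,j}=t$. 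The geometric series converges precisely because $t<1/(XY)$, i.e.\ $t^2X^2Y^2<1$, which is the hypothesis.

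The one point requiring a little care — and the likely main obstacle — is making the path enumeration and the interchange of summation fully rigorous: one must check that $\p\{i\sim_m j\}$ is indeed bounded by the sum over all finite alternating walks of the corresponding products of edge-probabilities (this is a union bound over the event that \emph{some} such walk is present, and a present path contains a present self-avoiding sub-path, so restricting to distinct vertices only decreases the bound), and that the resulting triple (or double) series of non-negative terms may be freely rearranged and summed by Tonelli. Since all terms are non-negative this is routine, but I would state it explicitly. A minor bookkeeping subtlety is the degenerate sub-cases (e.g.\ $k=1$ when $i,j\leq m$, where there are no internal $\leq m$ vertices, so $X^{2(k-1)}=X^0=1$; or $k=0$ in the mixed case, the single edge $\{i,j\}$), which the formulas above already handle correctly. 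Once these are in place the inequality follows by collecting the three cases into the single displayed bound with $\kappa^m_{t,i,j}$ as defined.
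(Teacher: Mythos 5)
Your proposal is correct and follows essentially the same route as the paper's proof: a union bound over alternating (bipartite) paths, the estimate $1-e^{-x}\leq x$, summation of the internal vertices into $\|\bx^{[\downarrow m]}\|^2$ and $\|\bx^{[m\uparrow]}\|^2$ factors, and the resulting geometric series. The only difference is that you spell out all three parity cases and the Tonelli justification explicitly, whereas the paper details only the case $i,j\leq m$ and declares the others analogous.
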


\begin{proof} %
  We only write the details of the proof for the case $i,j\leq m$. Other cases can be shown analogously. As in the proof of Lemma~\ref{lem_estimation_for_probability_of_connection_of_two_enges} we can estimate
  \begin{align*}
    \p\left\{ i\sim_m j \right\}&\leq \sum_{ k=1 }^{ \infty } \left( \sum_{ i_1,\dots,i_{k-1}=1 }^{ m } \sum_{ j_1,\dots,j_{k}=m+1 }^{ \infty }\prod_{ l=1 }^{ k } (x_{i_{l-1}}x_{j_l}^2x_{i_l}t^2) \right)\\
    &= \sum_{ k=1 }^{ \infty } \sum_{ i_1,\dots,i_{k-1}=1 }^{ m } \sum_{ j_1,\dots,j_{k}=m+1 }^{ \infty }
    \!\! x_i x_j t^2 \prod_{l=1}^{k-1} (tx_{i_l}^2) \prod_{l=1}^{k} (tx_{j_l}^2) \\
    &= x_ix_jt^2 \left\|\bx^{[m \uparrow]}\right\|^2\sum_{ k=1 }^{ \infty } \left(t^2\left\|\bx^{[\downarrow m]}\right\|^2\left\|\bx^{[m \uparrow]}\right\|^2\right)^{k-1} \\
    &= \frac{ x_ix_jt^2 \left\|\bx^{[m \uparrow]}\right\|^2 }{ 1-t^2 \left\|\bx^{[\downarrow m]}\right\|^2 \left\|\bx^{[m \uparrow]}\right\|^2 },
  \end{align*}
  where in the second line of the above expressions we used the fact that, for each $k$, $i_0=i$ and ${i_k=j}$.
\end{proof}

\begin{lemma} 
  \label{lem_estimation_of_probability_in_case_of_intermediate_edges}
  For every $\eps \in (0,1]$ and $m \geq 1$
\[
  \p\left\{ \left\|\MCv_{t}\left(\bx;\bA,R^{[\downarrow m \uparrow]}\right)\right\|^2-\left\|\bx^{[\downarrow m]}\right\|^2\geq \eps \right\}\leq \frac{1}{ \eps }\left\|\bx^{[m \uparrow]}\right\|^2\,P_t\left( \left\|\bx^{[\downarrow m ]}\right\|^2 \right),
\]
where $P_t(s)=2+(4t+2t^2)s+2t^2s^2$.
\end{lemma}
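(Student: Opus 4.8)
The plan is to reduce the statement to a first-moment (Markov inequality) bound on the quantity $\big\|\MCv_t(\bx;\bA,R^{[\downarrow m\uparrow]})\big\|^2 - \|\bx^{[\downarrow m]}\|^2$. Expanding the squared norm of the RMM over components and using the label--mass bookkeeping, one obtains
\[
  \big\|\MCv_t(\bx;\bA,R^{[\downarrow m\uparrow]})\big\|^2 = \sum_{i,j} x_i x_j\, \I_{\{i\sim_m j\}},
\]
so that subtracting $\|\bx^{[\downarrow m]}\|^2 = \sum_{i,j\le m} x_i x_j \I_{\{i\sim_m j\}} + \sum_{i\le m} x_i^2 (\text{off-diagonal already counted})$ leaves exactly the pairs where at least one index exceeds $m$. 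Concretely, $\big\|\MCv_t(\bx;\bA,R^{[\downarrow m\uparrow]})\big\|^2 - \|\bx^{[\downarrow m]}\|^2$ equals the sum of $x_i x_j \I_{\{i\sim_m j\}}$ over all ordered pairs $(i,j)$ with $\max(i,j)>m$ (the diagonal terms $x_k^2$ with $k>m$ are always ``connected to themselves'' and contribute $\|\bx^{[m\uparrow]}\|^2$).

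Next I would take expectations and invoke Lemma~\ref{lem_estimate_for_probab_of_connected_comp_special_case} to bound each $\p\{i\sim_m j\}$ by $x_ix_j\,\kappa^m_{t,i,j}(\|\bx^{[\downarrow m]}\|,\|\bx^{[m\uparrow]}\|) / (1 - t^2\|\bx^{[\downarrow m]}\|^2\|\bx^{[m\uparrow]}\|^2)$, splitting the sum into the three regimes encoded by $\kappa$: (a) $i,j\le m$ but one must actually be connected through the $>m$ level, contributing a factor $t^2\|\bx^{[m\uparrow]}\|^2$ times $\sum_{i,j\le m} x_i^2 x_j^2 = \|\bx^{[\downarrow m]}\|^4$; (b) $i,j>m$, contributing $t^2\|\bx^{[\downarrow m]}\|^2$ times $\|\bx^{[m\uparrow]}\|^4$, plus the diagonal $\|\bx^{[m\uparrow]}\|^2$; (c) the mixed case $i\le m<j$ (and its mirror), contributing a factor $t$ times $\sum_{i\le m, j>m} x_i^2 x_j^2 = \|\bx^{[\downarrow m]}\|^2\|\bx^{[m\uparrow]}\|^2$, counted with multiplicity $2$. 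Collecting terms one sees every contribution carries a common factor $\|\bx^{[m\uparrow]}\|^2$ (after using $t^2\|\bx^{[\downarrow m]}\|^2\|\bx^{[m\uparrow]}\|^2 \le 1$ to keep the denominator controlled, or more simply by direct factoring), and the leftover polynomial in $s=\|\bx^{[\downarrow m]}\|^2$ matches $P_t(s) = 2 + (4t+2t^2)s + 2t^2 s^2$: the constant $2$ from the diagonal plus the trivial $i,j>m$ term, the $(4t+2t^2)s$ from the $4ts$ mixed contribution together with a $2t^2 s$ correction, and the $2t^2 s^2$ from regime (a). A careful pass through these three sums, keeping track of the denominator $1-t^2\|\bx^{[\downarrow m]}\|^2\|\bx^{[m\uparrow]}\|^2$ (which we can bound below by using it to absorb one power), yields
\[
  \E\Big[\big\|\MCv_t(\bx;\bA,R^{[\downarrow m\uparrow]})\big\|^2 - \|\bx^{[\downarrow m]}\|^2\Big] \le \|\bx^{[m\uparrow]}\|^2\, P_t\big(\|\bx^{[\downarrow m]}\|^2\big),
\]
and Markov's inequality finishes the proof.

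The main obstacle I anticipate is the bookkeeping in matching the constants of $P_t$ exactly: one has to be careful that Lemma~\ref{lem_estimate_for_probab_of_connected_comp_special_case} only applies for $t$ below the critical threshold $1/(\|\bx^{[\downarrow m]}\|\|\bx^{[m\uparrow]}\|)$, so the statement as written must implicitly be read as vacuous (right-hand side $\ge$ something forcing triviality, or the probability trivially $\le$ the stated bound) when $t$ is large — I would add a remark that when $t^2\|\bx^{[\downarrow m]}\|^2\|\bx^{[m\uparrow]}\|^2 \ge 1/2$ (say) the right-hand side already exceeds $1$ by inspection of $P_t$, so the inequality holds trivially, and otherwise the denominator is bounded below by $1/2$ and the clean factoring above goes through with room to spare. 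A secondary point is justifying the interchange of expectation and the (infinite) double sum, which is immediate by Tonelli since all terms are nonnegative.
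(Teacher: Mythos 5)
Your proposal follows essentially the same route as the paper's proof: Markov's inequality on the first moment, the pair-connection bound of Lemma~\ref{lem_estimate_for_probab_of_connected_comp_special_case} split according to the three regimes of $\kappa$, and a case distinction on whether $t^2\|\bx^{[\downarrow m]}\|^2\|\bx^{[m\uparrow]}\|^2$ exceeds $1/2$ (in which case the stated bound is trivially at least $1$, since $P_t(s)\ge 2t^2 s$ and $\eps\le 1$). One intermediate claim in your write-up is incorrect — the difference $\left\|\MCv_t(\bx;\bA,R^{[\downarrow m\uparrow]})\right\|^2-\left\|\bx^{[\downarrow m]}\right\|^2$ is \emph{not} supported only on pairs with $\max(i,j)>m$, because distinct $i,j\le m$ can be joined through vertices above level $m$ and those pairs do contribute — but since your regime (a) accounts for exactly those pairs anyway, the final estimate is unaffected and agrees with the paper's.
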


\begin{remark}
Clearly $P_t(s)$ is a polynomial of two parameters.
The degree in $t$ is not important for our purposes. However, the fact that the degree in $s$ equals two is reflected in the proof of Proposition~\ref{pro_continuity_of_mc} given below. In particular, the fourth moment estimate of Lemma \ref{lem_finiteness_of_expectation} is necessary for our argument.
\end{remark}
\begin{proof} %
  For simplicity of notation we set $a:=\left\|\bx^{[\downarrow m]}\right\|^2$, $b:=\left\|\bx^{[m \uparrow]}\right\|^2$ and $c:=1/(1-t^2ab)$. We first assume that $t^2ab\leq  \frac{1}{ 2 }$, so that $c\leq 2$. Due to Chebyshev's inequality and Lemma~\ref{lem_estimate_for_probab_of_connected_comp_special_case},
  \begin{align*}
    \p&\left\{ \left\|\MCv_{t}\left(\bx;\bA,R^{[\downarrow m \uparrow]}\right)\right\|^2-\left\|\bx^{[\downarrow m]}\right\|^2\geq \eps\right\}\\
    & \leq 
    \frac{1}{ \eps }\left(\sum_{ k=1 }^{ \infty } x_k^2+\sum_{ i\neq j } x_ix_j\, \p\left\{ i\sim_m j \right\} -\sum_{ k=1 }^{ m } x_k^2\right)
    \\
    &\leq  \frac{1}{ \eps }\left( b +\sum_{ i\not=j} x_i^2 x_j^2 \, \kappa_{t,i,j}^m(\sqrt{a},\sqrt{b})\, c\right)\\
  &\leq \frac{1}{ \eps }\left( b+t^2 bc \sum_{ i,j=1 }^{ m } x_i^2x_j^2 +t^2 ac \sum_{ i,j=m+1 }^{ \infty } x_i^2x_j^2+2tc \sum_{ i=1 }^{ m }  \sum_{ j=m+1 }^{ \infty } x_i^2x_j^2 \right)\\
  &= \frac{1}{ \eps }\left( b +t^2a^2bc+t^2ab^2c+2tabc\right).
  \end{align*}
  If $t^2ab\geq \frac{1}{2}$, then we can estimate the probability by 1. Hence, 
  \begin{align*}
    \p&\left\{ \left\|\MCv_{t}\left(\bx;\bA,R^{[\downarrow m \uparrow]}\right)\right\|^2-\left\|\bx^{[\downarrow m]}\right\|^2\geq \eps\right\}\\
    &\leq \frac{1}{ \eps }\left( b +t^2a^2bc+t^2ab^2c+2tabc\right)\I_{\left\{ t^2ab\leq  \frac{1}{ 2 },\ c \leq 2 \right\}}+\I_{\left\{ t^2ab> \frac{1}{ 2 } \right\}}\\
    &\leq \frac{1}{ \eps }\left( b +2t^2a^2b+b+4tab\right)\I_{\left\{ t^2ab\leq  \frac{1}{ 2 } \right\}}+2t^2ab\I_{\left\{ t^2ab> \frac{1}{ 2 } \right\}}\\
    &\leq \frac{ b }{ \eps }\left( 2+\left( 4t+2t^2\right)a +2t^2a^2 \right).
  \end{align*}
\end{proof}

For $\bx \in \cvd$ we set $\len(\bx)$ to be the index of the last non-zero coordinate in $\bx$ in case it exists, and otherwise $\len(\bx)=+\infty$. Recall (\ref{E:Rupdown}) and note that   
\[
  \left\{ \{i,j\}:\ i\leq \len(\bx),\ j> \len(\bx) \right\}\equiv (R^*)^{[\downarrow \len(\bx) \uparrow]}.
\]
We denote  $\bx \uplus \by=\left(x_k \I_{\left\{ k\leq m \right\}}+y_{k-m}\I_{\left\{ k>m \right\}}\right)_{k\geq 1}$ for $\bx,\by \in l^2$, where $m=\len(\bx)<\infty$. Note that $\bx\uplus \by \not= \by \uplus \bx$ in general.

\begin{lemma} 
  \label{lem_glue_property}
  For $m\geq 1$ we set $\tilde{R}^m=R \cup \left( R^* \right)^{[\downarrow m \uparrow]}$ and
  \begin{align*}
    \tilde{\bZ}^m(t)&=\MCv_{t}\left(\bx;\bA,\tilde{R}^m\right),\\
    \bZ^{\leq m}(t)&=\MCv_{t}\left(\bx^{[\downarrow m]};\bA,R\right)
  \end{align*}
  and 
  \[
    \bZ^{>m}(t)=\MCv_{t}\left(\bx^{[m \uparrow]};\bA^{[m \uparrow]},R^{[m \uparrow]}\right).
  \]
  Let also $\tilde{\bA} \stackrel{d}{\sim}\expf$ be independent of $\bA$. 
  Then
  \[
    \law\left( \tilde{\bZ}^m(t),\bZ^{\leq m}(t),\bZ^{>m}(t) \right)=\law\left(\tilde{\bY}^m(t),\bZ^{\leq m}(t),\bZ^{>m}(t) \right),
  \]
  where
  \begin{equation} 
  \label{equ_process_z}
  \tilde{\bY}^m(t)=\MCv_{t}\left(\bZ^{\leq m}(t) \uplus \bZ^{>m}(t);\tilde{\bA},
  (R^*)^{[\downarrow \len(\bZ^{\leq m}(t) )\uparrow]}\right).
  \end{equation}
\end{lemma}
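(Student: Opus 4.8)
The plan is to exploit a three-way independence split of the i.i.d.\ family $\bA=(A_{i,j})$ into the \emph{within-$[m]$} variables $\bA^{\le}:=(A_{i,j})_{i<j\le m}$, the \emph{within-$\{m+1,m+2,\dots\}$} variables $\bA^{>}:=(A_{i,j})_{m<i<j}$, and the \emph{bridging} variables $\bA^{\mathrm{br}}:=(A_{i,j})_{i\le m<j}$, which are mutually independent. Since $(R^*)^{[\downarrow m\uparrow]}$ (recall \eqref{E:Rupdown} with $R^*$) consists precisely of the pairs with one index $\le m$ and one $>m$, one has $\tilde R^m\cap\binom{[m]}{2}=R\cap\binom{[m]}{2}$, $\tilde R^m$ restricted to $\{m+1,\dots\}$ agrees with the ``shift up'' of $R^{[m\uparrow]}$, and every cross pair lies in $\tilde R^m$. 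Hence $\G_t(\bx;\bA,\tilde R^m)$ is, on the common vertex set $\N$, the union of: the graph $G^{\le}$ on $[m]$ carrying the $R$-edges $\{i,j\}$ with $A_{i,j}\le x_ix_jt$; the graph $G^{>}$ on $\{m+1,\dots\}$ carrying the $R$-edges there, which after shifting down by $m$ is exactly $\G_t(\bx^{[m\uparrow]};\bA^{[m\uparrow]},R^{[m\uparrow]})$; and the bridging graph $G^{\mathrm{br}}$ on all pairs $\{i,j\}$, $i\le m<j$, with $A_{i,j}\le x_ix_jt$. The components of $G^{\le}$ (resp.\ $G^{>}$) form a partition $\mathcal P$ of $[m]$ (resp.\ $\mathcal Q$ of $\{m+1,\dots\}$); one checks directly from the definition of $\MCv$ that $\bZ^{\le m}(t)$ is the non-increasing list of the block masses $\sum_{i\in P}x_i$, $P\in\mathcal P$, followed by zeros, and that $\bZ^{>m}(t)$ is the analogous list for $\mathcal Q$. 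Crucially, $\mathcal P$ is $\sigma(\bA^{\le})$-measurable and $\mathcal Q$ is $\sigma(\bA^{>})$-measurable, hence both are independent of $\bA^{\mathrm{br}}$.

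Next I would contract each block of $\mathcal P$ and of $\mathcal Q$ to a single weighted super-vertex and let $H$ be the graph on these super-vertices in which two are adjacent iff some edge of $\G_t(\bx;\bA,\tilde R^m)$ joins them. As all intra-block edges come from $G^{\le}\cup G^{>}$, the only inter-block edges are bridging edges; thus $H$ is bipartite between the blocks of $\mathcal P$ and the blocks of $\mathcal Q$, and a routine contraction argument shows that the components of $\G_t(\bx;\bA,\tilde R^m)$ are exactly the unions of the blocks lying in a common component of $H$, with masses adding. Consequently $\tilde\bZ^m(t)=\ord$ of the total masses of the components of $H$. Moreover, for $P\in\mathcal P$ of mass $\alpha$ and $Q\in\mathcal Q$ of mass $\beta$, the event $\{P\sim Q\text{ in }H\}$ depends only on $(A_{i,j})_{i\in P,\,j\in Q}$, and $\p\{P\sim Q\mid\bA^{\le},\bA^{>}\}=1-\prod_{i\in P,\,j\in Q}e^{-x_ix_jt}=1-e^{-\alpha\beta t}$; since distinct block pairs involve disjoint sets of bridging variables and $\bA^{\mathrm{br}}$ is independent of $(\bA^{\le},\bA^{>})$, conditionally on $(\mathcal P,\mathcal Q)$ these events are independent over block pairs with the displayed probabilities, and zero-mass blocks stay isolated.

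The last step is to recognize this conditional law as $\law\big(\MCv_t(\bZ^{\le m}(t)\uplus\bZ^{>m}(t);\tilde\bA,(R^*)^{[\downarrow\len(\bZ^{\le m}(t))\uparrow]})\big)$. Here I would note that $\len(\bZ^{\le m}(t))$ equals the number $a\le m$ of positive blocks of $\mathcal P$, and that $\uplus$, by truncating $\bZ^{\le m}(t)$ at index $a$, makes $\bZ^{\le m}(t)\uplus\bZ^{>m}(t)$ list the positive $\mathcal P$-masses first and then the $\mathcal Q$-masses. Comparing with the definition of $\MCv$: the graph $\G_t\big(\bZ^{\le m}(t)\uplus\bZ^{>m}(t);\tilde\bA,(R^*)^{[\downarrow a\uparrow]}\big)$ has super-vertices $1,\dots,a$ of masses the positive $\mathcal P$-masses and super-vertices $a+1,a+2,\dots$ of masses the $\mathcal Q$-masses, $(R^*)^{[\downarrow a\uparrow]}$ is the complete bipartite relation between these two groups, each potential edge is present independently with probability $1-e^{-(\mathrm{mass})(\mathrm{mass})t}$, and zero-mass vertices are again isolated. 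Hence, conditionally on $(\mathcal P,\mathcal Q)$ — equivalently, given the pair $(\bZ^{\le m}(t),\bZ^{>m}(t))$, since $\tilde\bA$ is independent of $\bA$ and a Fubini argument reduces the a.s.\ $\cvd$-valuedness of the right-hand side to the fixed-initial-condition case covered by Lemma~\ref{L:mc_as_map_from_l2_to_l2} — the contracted bipartite random graph has the same law as $\G_t\big(\bZ^{\le m}(t)\uplus\bZ^{>m}(t);\tilde\bA,(R^*)^{[\downarrow\len(\bZ^{\le m}(t))\uparrow]}\big)$, so the conditional law of $\tilde\bZ^m(t)$ equals that of $\tilde\bY^m(t)$. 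Integrating against arbitrary bounded test functions of $\bZ^{\le m}(t)$ and $\bZ^{>m}(t)$ then yields the asserted equality of joint laws. I expect the only delicate point to be this final identification: keeping the bookkeeping of masses, the ordering, the role of $\len$ and $\uplus$, and the zero coordinates straight, and making the conditioning rigorous even though the initial condition of the right-hand $\MCv$ is itself random — rather than any genuinely hard estimate.
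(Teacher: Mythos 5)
Your proposal is correct and follows essentially the same route as the paper's (largely figure-based) proof: decompose $\G_t(\bx;\bA,\tilde R^m)$ into the intra parts generating $\bZ^{\le m}$ and $\bZ^{>m}$ plus the independent bridging edges, contract the blocks, and identify the resulting bipartite connection probabilities $1-e^{-\alpha\beta t}$ (independent across block pairs) with those produced by $\MCv_t(\cdot;\tilde\bA,(R^*)^{[\downarrow \len(\bZ^{\le m}(t))\uparrow]})$. Your write-up merely makes explicit the bookkeeping (the role of $\uplus$, $\len$, zero-mass vertices, and the conditioning on $(\bZ^{\le m},\bZ^{>m})$) that the paper leaves to the reader.
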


\begin{proof} %
The claim says that, conditionally on $\bZ^{\leq m}(t),\bZ^{>m}(t)$,
the infinite random vector $\tilde{\bZ}^m(t)$ can be constructed 
via procedure  \eqref{equ_process_z}.
  Its proof follows from properties of the exponential distribution and the definition of $\MCv$.
  We provide three figures to help any interested reader construct a detailed argument.
In the figures there are nine blocks in total and $m$ equals five. 
The general setting (with infinitely many blocks, and arbitrary finite $m$) is analogous. 
  \begin{figure}[H]
      \centering
\captionsetup{width=.8\linewidth}
      \includegraphics[width=115mm]{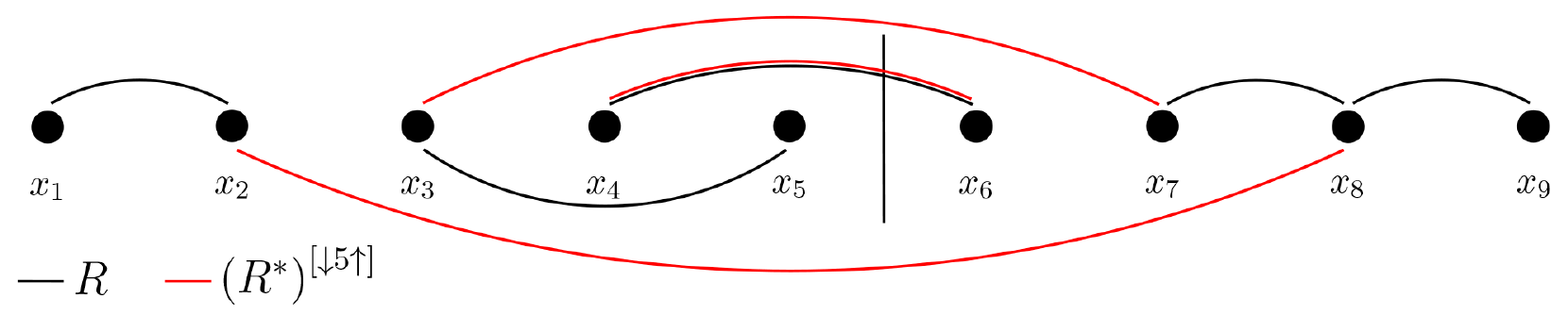}
      \caption{As the legend suggests, the open edges in $R$ are indicated in black, and the
  open edges in $(R^*)^{[\downarrow m \uparrow]}$ are indicated in red.}
  \end{figure}
\begin{figure}[H]
    \centering
    \captionsetup{width=.8\linewidth}
    \includegraphics[width=115mm]{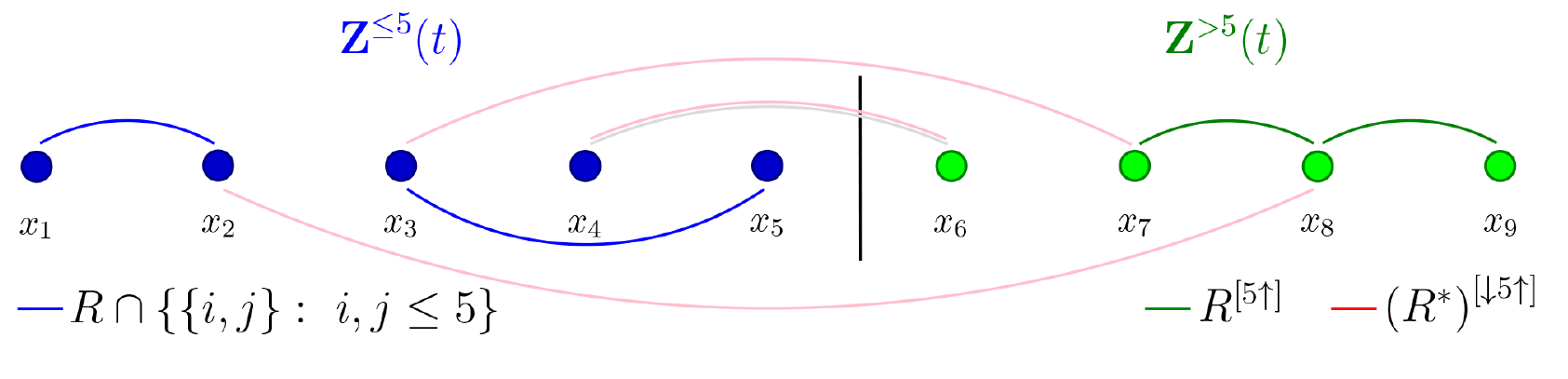}
    \caption{Here the previously red edges are indicated in pink,
  the open edges in $R\setminus (R^*)^{[\downarrow m \uparrow]}$ are indicated in blue or green.}
\end{figure}
\begin{figure}[H]
    \centering
    \captionsetup{width=.8\linewidth}
    \includegraphics[width=115mm]{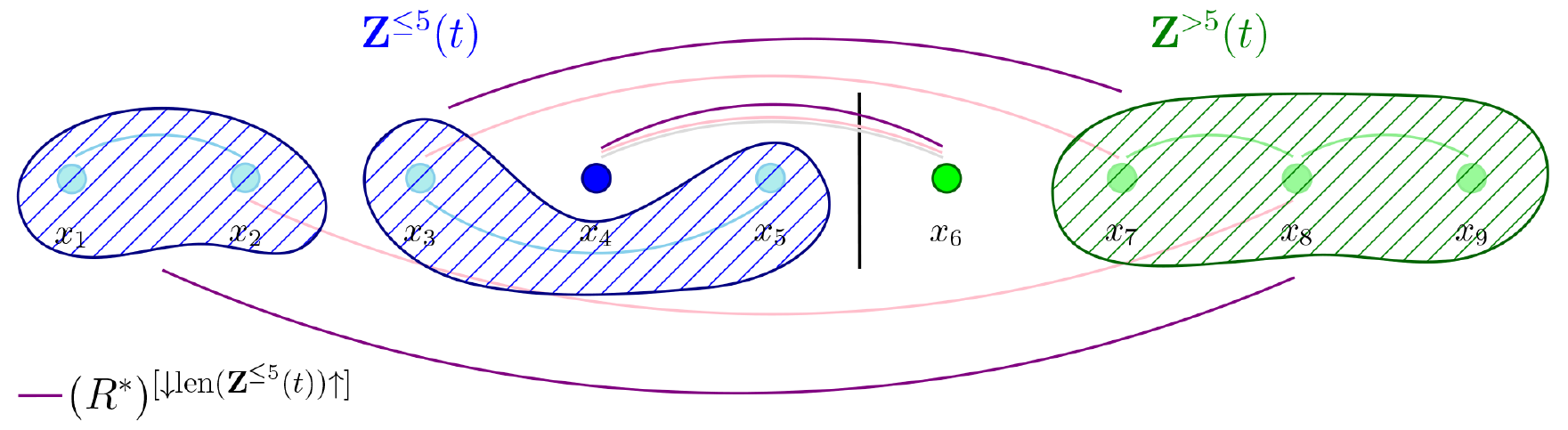}
    \caption{The figure shows the configuration with connected components
  formed based on the (lightly indicated) blue and green open edges.
  The pink (or red) edges can now be superimposed, and combining them results 
  in purple edges between the blocks of $\bZ^{\leq m}$ and $\bZ^{>m}$.}
    \label{fig:my_label}
\end{figure}
\noindent
Note that due to elementary properties of independent exponentials, a purple edges in Figure \ref{fig:my_label} connects the $i$th block of $\bZ^{\leq m}$ and the $j$th block of $\bZ^{>m}$ with probability $1- e^{-t Z_i^{\leq m}Z_j^{>m}}$.
\end{proof}

\begin{lemma} 
  \label{lem_finiteness_of_expectation}
  For every $\bx \in l^2$, $T>0$ and a symmetric relation $R$ on $\N$, one has 
  \[
    \E \sup\limits_{ t \in[0,T] }\left\|\MCv_t\left( \bx;\bA,R \right)\right\|^{4}< +\infty.
  \]
\end{lemma}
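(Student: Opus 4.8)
The plan is to make two reductions and then prove a path-counting bound on connection probabilities.

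\emph{Reductions.} Since $R\subseteq R^*$ (the maximal relation) and $t\mapsto\left\|\MCv_t(\bx;\bA,R)\right\|$ is non-decreasing on $[0,T]$ and dominated there by $\left\|\MCv_T(\bx;\bA,R^*)\right\|$ (both by Lemma~\ref{lem_norm_property_of_mc}), we have $\sup_{t\in[0,T]}\left\|\MCv_t(\bx;\bA,R)\right\|\le\left\|\MCv_T(\bx;\bA,R^*)\right\|$, so it suffices to prove $\E\left\|\MCv_T(\bx;\bA,R^*)\right\|^4<\infty$. Write $M_C$ for the mass of a component $C$ of $\G_T(\bx;\bA,R^*)$ and $i\sim j$ for the event that $i,j$ lie in the same component. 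Then pathwise (all summands nonnegative) $\left\|\MCv_T(\bx;\bA,R^*)\right\|^2=\sum_C M_C^2=\sum_{i,j}x_ix_j\I_{\{i\sim j\}}$, hence by Tonelli
\[
\E\left\|\MCv_T(\bx;\bA,R^*)\right\|^4=\sum_{i,j,k,l}x_ix_jx_kx_l\,\p\{i\sim j,\ k\sim l\}.
\]
Everything thus reduces to an estimate $\p\{i\sim j,\ k\sim l\}\le K\,x_ix_jx_kx_l$ for distinct indices, together with the analogous bounds $\p\{i\sim j,\ i\sim l\}\le K\,x_ix_jx_l$ and $\p\{i\sim j\}\le K\,x_ix_j$ for the coincidence patterns ($K$ a finite constant, changing from line to line); these yield that $\E\left\|\MCv_T(\bx;\bA,R^*)\right\|^4$ is bounded by a finite polynomial in $\|\bx\|^2$, hence finite.

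\emph{The connection estimate.} Fix $m\in\N$ with $\sigma:=\sum_{k>m}x_k^2<1/(2T)$, possible since $\bx\in l^2$; call the vertices $\le m$ \emph{heavy} (only $m$ of them, each of mass $\le x_1$) and the vertices $>m$ \emph{light} (``collectively subcritical'': $T\sigma<1/2$). The event $\{i\sim j,\ k\sim l\}$ forces the existence of an open subgraph $H$ of $\G_T(\bx;\bA,R^*)$ in which $i\sim j$ and $k\sim l$; taking $H$ minimal, every leaf of $H$ is among the marked vertices $\{i,j,k,l\}$, so $H$ is a forest with at most four leaves and at most two branch vertices. Contract each maximal run of consecutive degree-$2$ light vertices of $H$: the resulting forest $H'$ has vertex set inside $\{i,j,k,l\}\cup\{\text{heavy vertices of }H\}\cup\{\text{light branch vertices of }H\}$, hence at most $m+6$ vertices and at most $m+5$ edges. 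Using $\p\{\{a,b\}\text{ open}\}=1-e^{-x_ax_bT}\le x_ax_bT$, independence of edges, and summation over each contracted light run (a geometric series of ratio $\le T\sigma<1/2$), every edge of $H'$ joining $a$ to $b$ contributes at most
\[
\sum_{r\ge0}x_ax_b\,T^{r+1}\sigma^{r}=\frac{x_ax_bT}{1-T\sigma}\le 2T\,x_ax_b .
\]
It remains to sum out the heavy vertices and the (at most two) branch vertices of $H'$, over the boundedly many admissible shapes (a number depending on $m$ only): a heavy vertex of degree $d\le 4$ contributes, after summation, a factor $\le\sum_{a\le m}x_a^d\le\|\bx\|^d$, a light branch vertex a factor $\le\sum_{a>m}x_a^3\le\sigma^{3/2}$, and each marked vertex is left with at least one power of its own mass, any surplus powers being absorbed via $x_a\le\|\bx\|$. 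Collecting factors gives $\p\{i\sim j,\ k\sim l\}\le K\,x_ix_jx_kx_l$ with $K$ depending only on $m$, $T$, $\|\bx\|^2$; the coincidence patterns follow identically with fewer marked vertices.

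\emph{On the main obstacle and an alternative.} The only genuine work is the combinatorial bookkeeping just described: organising the contraction of light runs, checking that the geometric series converge --- this is exactly where the choice of $m$ (i.e.\ $\sigma<1/(2T)$) enters, in analogy with the hypothesis $t<1/\|\bx\|^2$ of Lemma~\ref{lem_estimation_for_probability_of_connection_of_two_enges} --- and enumerating the finitely many Steiner shapes on at most four marked points. An alternative that avoids this ad hoc combinatorics is to apply Lemma~\ref{lem_glue_property} with $R=R^*$ (so $\tilde R^m=R^*$), representing $\MCv_T(\bx;\bA,R^*)$ as a bipartite merging of the at most $m$ heavy clusters, of total mass $\le\sum_{k\le m}x_k$, against the subcritical coalescent $\MCv_T\big(\bx^{[m\uparrow]};\bA^{[m\uparrow]},(R^*)^{[m\uparrow]}\big)$, and then to bound the bipartite fourth moment via Lemma~\ref{lem_estimate_for_probab_of_connected_comp_special_case} once one knows that this subcritical coalescent has susceptibility with finite moments of all orders --- which is again the estimate above, carried out with light vertices only.
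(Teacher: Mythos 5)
Your argument is correct, but it takes a genuinely different route from the paper's. The paper proves this lemma by the same first reduction as you (Lemma~\ref{lem_norm_property_of_mc} reduces everything to $\E\left\|\MCv_T(\bx;\bA,R^*)\right\|^4<\infty$) and then invokes Theorem~\ref{the_finiteness_of_fourth_moment} from the appendix, whose proof is a three-stage affair: a Reimer/van den Berg--Kesten disjoint-occurrence bound for the three- and four-point connection functions valid only in the subcritical window $t\|\bx\|^2<1$ (Proposition~\ref{pro_estimate_for_connected_componnents}), a local-martingale identity for $\|\bX(t)\|^2$ that converts the resulting bound on $\sum_k\E X_k^4(t)$ into finiteness of $\E\|\bX(t)\|^4$ for such $t$ (Proposition~\ref{pro_finiteness_of_fourth_moment}), and finally a ``grinding''/finite-modification argument to remove the subcriticality restriction. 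You instead run a single first-moment path count valid at all times, made convergent by splitting off the finitely many heavy vertices (summed over the finitely many self-avoiding arrangements they can occupy in the Steiner forest) from the subcritical light tail (handled by the same geometric series as in Lemma~\ref{lem_estimation_for_probability_of_connection_of_two_enges}); this avoids both the Reimer-type inequality and the martingale/grinding machinery. The trade-off: the paper's constants are uniform over $\{\bx:\,t\|\bx\|^2\le 1-\delta\}$ and the intermediate susceptibility-type estimates are of independent interest to the authors, whereas your constant $K$ depends on the whole tail profile of $\bx$ through the choice of $m$ --- which costs nothing for the present lemma, where only finiteness for a fixed $\bx$ is needed. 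Two points in your write-up deserve to be stated explicitly rather than left implicit: the contraction must spare a marked vertex even when it is light and of degree two in $H$ (otherwise its mass factor is lost and the coincidence-pattern sums need $\bx\in l^1$), and the sum over heavy-vertex arrangements is finite precisely because $H$ is self-avoiding and there are only $m$ heavy vertices; both are consistent with what you wrote but should be said.
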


\begin{proof} %
  The statement directly follows from Lemma~\ref{lem_norm_property_of_mc} and Theorem~\ref{the_finiteness_of_fourth_moment} in the appendix.
\end{proof}

We now use Lemmas \ref{lem_estimation_of_probability_in_case_of_intermediate_edges} and \ref{lem_glue_property} to obtain the following important uniform bound.

\begin{lemma} 
  \label{lem_estimation_of_probability_of_z_from_its_first_coordinates}
  Let $\bZ(t)=\MCv_{t}\left(\bx;\bA,R\right)$ and $\bZ^{\leq m}(t)$, $\bZ^{> m}(t)$ be as in Lemma~\ref{lem_glue_property}. 
  Then for every $\eps>0$ and $m\geq 1$
  \[
    \p\left\{ \left\|\bZ(t)\right\|^2-\left\|\bZ^{\leq m}(t)\right\|^2\geq \eps \right\}\leq \frac{1}{ \eps }\E \left\|\bZ^{>m}(t)\right\|^2\E\, P_t\left(\left\|\bZ^{\leq m}(t)\right\|^2 \right),
  \]
  where the polynomial $P_t$ is defined in Lemma~\ref{lem_estimation_of_probability_in_case_of_intermediate_edges}.
\end{lemma}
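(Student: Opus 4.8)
Here is the plan I would follow. The idea is to dominate $\|\bZ(t)\|$ by the norm of the restricted multiplicative merging with respect to the \emph{larger} relation $\tilde{R}^m=R\cup(R^*)^{[\downarrow m\uparrow]}$ appearing in Lemma~\ref{lem_glue_property}, then rewrite that norm through the gluing identity, and finally feed the conditional picture into Lemma~\ref{lem_estimation_of_probability_in_case_of_intermediate_edges}. First I would observe that $R\subseteq\tilde{R}^m$, so Lemma~\ref{lem_norm_property_of_mc} gives $\|\bZ(t)\|\le\|\tilde{\bZ}^m(t)\|$ pointwise, whence it suffices to estimate $\p\{\|\tilde{\bZ}^m(t)\|^2-\|\bZ^{\leq m}(t)\|^2\ge\eps\}$. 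Then Lemma~\ref{lem_glue_property} gives $\law(\tilde{\bZ}^m(t),\bZ^{\leq m}(t),\bZ^{>m}(t))=\law(\tilde{\bY}^m(t),\bZ^{\leq m}(t),\bZ^{>m}(t))$ with $\tilde{\bY}^m(t)$ as in \eqref{equ_process_z}, so this probability equals $\p\{\|\tilde{\bY}^m(t)\|^2-\|\bZ^{\leq m}(t)\|^2\ge\eps\}$.

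Next I would condition on $\bZ^{\leq m}(t)$ and $\bZ^{>m}(t)$, relying on two observations. On the one hand, $\bx^{[\downarrow m]}$ has at most $m$ non-zero coordinates, so almost surely $\G_t(\bx^{[\downarrow m]};\bA,R)$ has no edge incident to a vertex larger than $m$; hence $\bZ^{\leq m}(t)$ has at most $m$ non-zero coordinates (in particular $\len(\bZ^{\leq m}(t))\le m<\infty$ a.s., so that $\uplus$ and \eqref{equ_process_z} make sense), it is measurable with respect to $\sigma(A_{i,j}:1\le i<j\le m)$, while $\bZ^{>m}(t)$, being built from $\bA^{[m\uparrow]}$, is measurable with respect to $\sigma(A_{i,j}:m<i<j)$ — so $\bZ^{\leq m}(t)$ and $\bZ^{>m}(t)$ are independent, and $\tilde{\bA}$ is independent of both. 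On the other hand, for finite $\mu=\len(\bu)$ one has $(\bu\uplus\bv)^{[\downarrow\mu]}=\bu$, $(\bu\uplus\bv)^{[\mu\uparrow]}=\bv$, and the relation in \eqref{equ_process_z} is precisely $(R^*)^{[\downarrow\mu\uparrow]}$. Applying Lemma~\ref{lem_estimation_of_probability_in_case_of_intermediate_edges} with data $\bu\uplus\bv$, the maximal relation $R^*$, level $\mu$, and the i.i.d.\ family $\tilde{\bA}$, the (regular) conditional law of $\tilde{\bY}^m(t)$ yields, for $\eps\in(0,1]$,
\[
\p\Big\{\|\tilde{\bY}^m(t)\|^2-\|\bZ^{\leq m}(t)\|^2\ge\eps \;\Big|\; \bZ^{\leq m}(t),\bZ^{>m}(t)\Big\}\le \frac1\eps\,\|\bZ^{>m}(t)\|^2\,P_t\big(\|\bZ^{\leq m}(t)\|^2\big).
\]
Taking expectations and using the independence of $\bZ^{\leq m}(t)$ and $\bZ^{>m}(t)$ to factor $\E[\|\bZ^{>m}(t)\|^2P_t(\|\bZ^{\leq m}(t)\|^2)]=\E\|\bZ^{>m}(t)\|^2\,\E P_t(\|\bZ^{\leq m}(t)\|^2)$, and combining with the first paragraph, this gives the assertion for $\eps\in(0,1]$; since the left-hand side is non-increasing in $\eps$ this is the only range that matters in the sequel, and the regime $\eps>1$ can be dealt with by the same reasoning.

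The hard part will be the conditioning step: one must correctly identify the conditional law of $\tilde{\bY}^m(t)$ given $(\bZ^{\leq m}(t),\bZ^{>m}(t))$ as that of a \emph{single} restricted multiplicative merging of $\bZ^{\leq m}(t)\uplus\bZ^{>m}(t)$ driven by the fresh family $\tilde{\bA}$, check that truncating and shifting $\bZ^{\leq m}(t)\uplus\bZ^{>m}(t)$ at level $\len(\bZ^{\leq m}(t))$ returns exactly $\bZ^{\leq m}(t)$ and $\bZ^{>m}(t)$ so that Lemma~\ref{lem_estimation_of_probability_in_case_of_intermediate_edges} outputs precisely $\|\bZ^{>m}(t)\|^2\,P_t(\|\bZ^{\leq m}(t)\|^2)$, and verify the disjointness of the sub-families of $(A_{i,j})$ driving $\bZ^{\leq m}(t)$ and $\bZ^{>m}(t)$, which is exactly what converts the expectation of a product into a product of expectations. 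It is this final expectation, through the degree-two term of $P_t$, where the fourth-moment bound of Lemma~\ref{lem_finiteness_of_expectation} will be needed later.
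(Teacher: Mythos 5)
Your proposal is correct and follows essentially the same route as the paper's proof: dominate $\|\bZ(t)\|$ by $\|\tilde{\bZ}^m(t)\|$ via Lemma~\ref{lem_norm_property_of_mc}, pass to $\tilde{\bY}^m(t)$ via Lemma~\ref{lem_glue_property}, condition on $(\bZ^{\leq m}(t),\bZ^{>m}(t))$ and apply Lemma~\ref{lem_estimation_of_probability_in_case_of_intermediate_edges}, then factor the expectation by independence. You are in fact more explicit than the paper about the measurability/independence of $\bZ^{\leq m}$ and $\bZ^{>m}$ and about the restriction $\eps\in(0,1]$ inherited from Lemma~\ref{lem_estimation_of_probability_in_case_of_intermediate_edges}, which the paper glosses over.
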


\begin{proof} %
  Let $\tilde{\bZ}^m(t)=\MCv_{t}\left(\bx;\bA,\tilde{R}^m\right)$ and $\tilde{\bY}^m(t)$ be defined by~\eqref{equ_process_z}, where $\tilde{R}^m=R \cup  \left( R^* \right)^{[\downarrow m \uparrow]}$ as before. 
  Due to Lemma~\ref{lem_norm_property_of_mc} we have $ \left\|\bZ(t)\right\|^2 \leq   \|\tilde{\bZ}^m(t)\|^2$ a.s., so 
  \begin{align*}
    \p\Big\{ \left\| \bZ(t) \right\|^2 &-\left\| \bZ^{\leq m}(t) \right\|^2\geq \eps \Big\} \leq \p\left\{ \left\|{\tilde{\bZ}^m(t)}\right\|^2-\left\|\bZ^{\leq m}(t)\right\|^2\geq \eps \right\}\\
   \text{(Lemma \ref{lem_glue_property})\ } & =\p\left\{ \left\|\tilde{\bY}^m(t)\right\|^2-\left\|\bZ^{\leq m}(t)\right\|^2\geq \eps \right\}\\
    &=\E\left( \p\left\{ \left\|\tilde{\bY}^m(t)\right\|^2-\left\|\bZ^{\leq m}(t)\right\|^2\geq \eps \, \bigg| \, \bZ^{\leq m}(t),\bZ^{>m}(t) \right\}\right)\\
    \text{(Lemma \ref{lem_estimation_of_probability_in_case_of_intermediate_edges})\ } & \leq \frac{1}{ \eps }\E \left[\left\|\bZ^{>m}(t)\right\|^2\,P_t\left(\left\|\bZ^{\leq m}(t)\right\|^2 \right)\right]\\
      &= \frac{1}{ \eps }\E \left\|\bZ^{>m}(t)\right\|^2\E\,P_t\left(\left\|\bZ^{\leq m}(t)\right\|^2 \right).
  \end{align*}  
  The final identity follows from the independence of $\bZ^{\leq m}$ and $\bZ^{>m}$.
\end{proof}

\begin{proof}[Proof of Proposition~\ref{pro_continuity_of_mc}] %
  Let $\bx^{(n)}\to \bx$ in $l^2$ and $t_n \to t$ in $[0,+\infty)$ as $n \to \infty$. 
  We recall that $\bZ^{(n)}(t)=\MCv_{t}\left(\bx^{(n)};\bA,R\right)$ and $\bZ(t)=\MCv_{t}\left(\bx;\bA,R\right)$. 
  For $m\geq 1$ we set  
  \begin{align*}
    \bZ^{\leq m}(t)&=\MCv_{t}\left(\bx^{[\downarrow m]};\bA,R\right),\\
    \bZ^{>m}(t)&= \MCv_{t}\left(\bx^{[m \uparrow]};\bA^{[m \uparrow]},R^{[m \uparrow]}\right),\\
    \bZ^{n,\leq m}(t)&=\MCv_{t}\left( \left( \bx^{(n)} \right)^{[\downarrow m]};\bA,R\right),\text{ and }\\ \bZ^{n,>m}(t)&=\MCv_{t}\left( \left( \bx^{(n)} \right)^{[m \uparrow]};\bA^{[m \uparrow]},R^{[m \uparrow]}\right).
  \end{align*}
  
  We fix any subsequence $\{ n_i \}_{i \geq 1}$ of $\N$ and first choose a subsequence $\{ n_{i_l} \}_{l\geq 1}$ of $\{ n_i \}_{i\geq 1}$, denoted by $\{ n'_l \}_{l\geq 1}$ such that $\sum_{ l=1 }^{ \infty } \left\|\bx^{(n'_l)}-\bx\right\|^2<\infty$. We first show that for every $T>0$
   \begin{equation} 
    \label{equ_uniform_boundedness_of_z_m}
    \sup\limits_{ l\geq 1 }\E\sup_m \sup\limits_{ t \in [0,T] } \left\|\bZ^{n'_l,\leq m}(t)\right\|^4<\infty.
  \end{equation}
  Let $\by=(y_k)_{k\geq 1}:=\left( \sup\limits_{ l\geq 1 }\left(\max\left\{ x^{(n'_l)}_k,x_k \right\}\right) \right)_{k\geq 1}$. Then $\by \in l^2$ and $x^{(n'_l)}_k\leq y_k$ for every $k\geq 1$. Indeed, the fact that $\by$ belongs to $l^2$ follows from the estimate 
  \[
    \|\by-\bx\|^2=\sum_{ k=1 }^{ \infty } (y_k-x_k)^2\leq \sum_{ k=1 }^{ \infty } \sum_{ l=1 }^{ \infty } (x^{(n'_l)}_k-x_k)^2=\sum_{ l=1 }^{ \infty } \left\|\bx^{(n'_l)}-\bx\right\|^2<\infty.
  \]
  Hence, by Lemmas~\ref{lem_norm_property_of_mc} and~\ref{lem_finiteness_of_expectation}, we obtain that 
 \begin{equation}
 \label{equ_uniform_boundedness_of_z}
    \sup\limits_{ l\geq 1 }\E\sup\limits_{ t \in [0,T] } \left\|\bZ^{(n'_l)}(t)\right\|^4\leq \E \left\|\MCv_T\left( \by,\bA,R^* \right)\right\|^4<\infty,
  \end{equation}
which implies \eqref{equ_uniform_boundedness_of_z_m} due to Lemma \ref{lem_norm_property_of_mc}.

Let $\eps>0$ be fixed. 
  Due to Lemma~\ref{L:mc_as_map_from_l2_to_l2},
  we have $\lim_{m\to\infty} \left\|\bZ^{\leq m}(t)\right\| = \left\|\bZ(t)\right\|$, and combining this with 
  Lemmas~\ref{lem_uniform_estimation_of_tails},~\ref{lem_estimation_of_probability_in_case_of_intermediate_edges}, ~\ref{lem_estimation_of_probability_of_z_from_its_first_coordinates} and~\eqref{equ_uniform_boundedness_of_z_m}, we can conclude that there exists $m\geq 1$ sufficiently large so that 
  \begin{equation}
  \label{E:tailofZ}
    \p\left\{ \left\|\bZ(t)\right\|^2-\left\|\bZ^{\leq m}(t)\right\|^2\geq \frac{ \eps^2 }{ 9 } \right\}\leq \frac{ \eps }{ 3 },
  \end{equation}
  and
  \begin{equation}
   \label{E:tailofZn}
    \p\left\{ \left\|\bZ^{(n'_l)}(t_{n'_l})\right\|^2-\left\|\bZ^{n'_l,\leq m}(t_{n'_l})\right\|^2\geq \frac{ \eps^2 }{ 9 } \right\}\leq \frac{ \eps }{ 3 }.
  \end{equation}
  Next, by Lemma~\ref{lem_convergence_for_zero_teils}, there exists $L \in \N$ such that for all $l\geq L$
  \[
    \p\left\{ \left\|\bZ^{\leq m}(t)-\bZ^{n'_l,\leq m}(t_{n'_l})\right\|\geq \frac{ \eps }{ 3 } \right\}\leq \frac{ \eps }{ 3 }.
  \]
  We can conclude that for all $l \geq L$
  \begin{align*}
    \p\left\{ \left\|\bZ(t)-\bZ^{(n'_l)}(t_{n'_l})\right\|\geq \eps \right\}&\leq \p\left\{ \left\|\bZ(t)-\bZ^{\leq m}(t)\right\|\geq \frac{ \eps }{ 3 } \right\}\\
    &+\p\left\{ \left\|\bZ^{\leq m}(t)-\bZ^{n'_l,\leq m}(t_{n'_l})\right\|\geq \frac{ \eps }{ 3 } \right\}\\
    &+\p\left\{ \left\|\bZ^{n'_l,\leq m}(t_{n'_l})-\bZ^{(n'_l)}(t_{n'_l})\right\|\geq \frac{ \eps }{ 3 } \right\}\\&\leq \frac{ \eps }{ 3 }+\frac{ \eps }{ 3 }+\frac{ \eps }{ 3 }=\eps,
  \end{align*}
 where we applied (\ref{E:tailofZ},\ref{E:tailofZn}) and \cite{Aldous:1997}, Lemma 17 in order to bound from above the right-hand-side in the first and the third line.
  
  This implies that $\left\|\bZ(t)-\bZ^{(n'_l)}(t_{n'_l})\right\| \to 0$ in probability as $l \to \infty$. So, we have shown that for any subsequence $\{ n_i \}_{i\geq 1}$ there exists a subsubsequence $\{ n_{i_l} \}_{l\geq 1}$ such that $\left\|\bZ(t)-\bZ^{(n'_l)}(t_{n'_l})\right\| \to 0$ in probability as $l \to \infty$. 
 Therefore 
    \[
    \left\|\bZ(t)-\bZ^{(n)}(t_{n})\right\| \to 0 \quad \mbox{in probability as}\ n \to \infty.
  \]
\end{proof}

\section{Scaling limit of near-critical stochastic block models}%
\label{sec:convergence_of_stochastic_block_model}

Let $n,m\geq 2$ be given.
Let $G_{n;p,q}^m$ be the random graph issued from the stochastic block model (SBM) $G_m(n,p,q)$, 
with $m$ classes of size $n$.
The structure of $G_{n;p,q}^m$ was described in the Introduction. 
Recall that the edges are drawn independently at random, and each intra-class edge is present (or open) with probability $p$, while each inter-class edge is present (or open) with probability $q$. 

Here we introduce some additional notation.
Denote the vertices of $G_{n;p,q}^m$ by $[nm]:=\{1,2,\dots,nm\}$, and for each  $l=1,\dots,m$ interpret the subset  
 $B_l=\{l,m+l,2m+l\dots, (n-1)m+l\}$ of $[nm]$ as the $l$-th class.
In addition we define the ``round-robin join'' map $\glue$ from $\cvzero \times \ldots \times \cvzero$ to $l^\infty$ as follows:
for $m$ vectors $\bx^1,\ldots, \bx^m\in \cvzero$ let
$$
\glue(\bx^1,\ldots, \bx^m):=(x_1^1,x_1^2,\ldots, x_1^m,x_2^1,x_2^2,\ldots, x_2^m, x_3^1,x_3^2,\ldots).
$$
Note that $\glue$ is not commutative.

 The main point of $\MCv$ and of the study conducted in Section~\ref{sec:continuity_of_the_rmm_in_the_spatial_variable} is that 
a graphical construction of the connected component sizes of $G_{n;1-e^{-t},1-e^{-u}}^m$ (completely analogous to the one for the \MC as discussed in the introduction) can be conveniently given as follows:\\
 (i) let $\bx=(1,1,\ldots,1,0,0,\ldots)$ where there are exactly $nm$ coordinates equal to $1$,\\
 (ii) for each $l\in {1,\ldots,m}$ define relations 
 \begin{equation} 
  \label{equ_r_intra}
  R_{{\rm intra}}^{(m),n;l}=\{ \{i,j\}:\, \ i,j \in B_l\},
 \end{equation}   
 and also define (note that $i \in B_l$ iff $i \!\! \mod m = l$)
 \begin{equation}
  \label{equ_r_inter}
  R_{{\rm inter}}^{(m)}=\{ \{i,j\}:\ (i - j)\!\!\!\! \mod m  \neq 0\}.
 \end{equation} 
(iii) The three-step procedure 
$$
\MCv_u
\left(\glue\left(\MCv_t\left(\bx;\bA,
R_{{\rm intra}}^{(m),n;1}\right),\ldots,\MCv_t\left(\bx;\bA,R_{{\rm intra}}^{(m),n;m}\right)\right) ; \bA', R_{{\rm inter}}^{(m)}\right),
$$ 
where $\bA,\bA'\stackrel{d}{\sim}\expf$ are independent,
has the law of the connected component sizes of $G_{n;1-e^{-t},1-e^{-u}}^m$.
Indeed, in the first step 
\begin{equation}
\label{E:Rforglue}
{\bf R}^l_t:=\MCv_t\left(\bx;\bA,R_{{\rm intra}}^{(m),n;l}\right) 
\end{equation}
gives the
connected component sizes when all the intra-$B_l$ open edges and no other edges are taken into account.
The second step consists of conveniently assembling the data on all the component sizes of all the $m$ classes (here all the intra-class edges and none of the inter-class edges are accounted for) into a single vector 
\begin{equation}
\label{D:Rt}
{\bf R}_t:= \glue\left({\bf R}^1_t, {\bf R}^2_t \cdots, {\bf R}^m_t\right). 
\end{equation}
In the third step, due to the elementary properties of independent exponentials, applying 
$\MCv_u\left(\cdot; \bA', R_{{\rm inter}}^{(m)}\right)$ to ${\bf R}_t$ gives the connected component sizes when all the open edges between different classes are also  taken into account. This is similar in spirit to the construction in Lemma~\ref{lem_glue_property}.

For $t \in \R$, $u\geq 0$ and $n\geq n_0$ sufficiently large, let $\tilde{\bzeta}^{(n)}(t,u)$ denote the vector of decreasingly ordered component sizes of $G^m_{n; n^{-1}+ tn^{ -4/ 3 }, un^{-4/3}}$. Let also 
\[
  \bzeta^{(n)}(t,u)=n^{-2/3}\tilde{\bzeta}^{(n)}(t,u),\quad t \in \R ,\ \ u\geq 0,\ \ n\geq n_0.
\]
We consider $\bzeta^{(n)}(t,u)$ to be a random element of $\cvd$ (for this we append infinitely many zero entries). 
As discussed above we have
\begin{align}
  \bzeta^{(n)}(t,u)=\MCv_{u_n}\Big(\glue\Big(&\MCv_{t_n}\left(\bx^{(n)};\bA,R_{{\rm intra}}^{(m),n;1}\right), \label{E:zetantu} \\
  &\dots,\MCv_{t_n}\left(\bx^{(n)};\bA,R_{{\rm intra}}^{(m),n;m}\Big)\Big); \bA', R_{{\rm inter}}^{(m)}\right),
 \nonumber
\end{align}
where $\bx^{(n)}=\left(n^{-2/3},n^{-2/3},\dots,n^{-2/3},0,0,\dots\right)$ has exactly $nm$ components equal to $n^{-2/3}$, where $\bA,\bA'\stackrel{d}{\sim}\expf$ are independent, and also
\begin{align*}
  t_n&= -n^{4/3}\ln \left( 1-n^{-1}-tn^{-4/3} \right),\\
  u_n&= -n^{4/3} \ln \left( 1-un^{-4/3} \right)
\end{align*}
for all $n\geq n_0$.
Note that $t_n$ and $u_n$ are chosen according to the identities $1-e^{-t_nn^{-2/3}n^{-2/3}}= \frac{1}{ n }+ \frac{t}{ n^{4/3} }$ and $1-e^{-u_n n^{-2/3}n^{-2/3}}= \frac{u}{ n^{4/3} }$ and that $\bx^{(n)}$ differs from $\bx$, defined above, by the normalization factor $n^{-2/3}$. 
Note that the multipliers in the exponent are compatible with the restricted multiplicative merging, since the mass of the particles is now $n^{-2/3}$.
In the original graphical construction all the masses were equal to $1$, and this is implicit in the expression for the first connectivity parameter (which is equal to $1- e^{-t}$ in the construction comprising \eqref{equ_r_intra}--\eqref{D:Rt}). 

Let $\{\bZ^1(t),\ t \in \R \},\dots,\{\bZ^m(t),\ t \in \R \}$ be independent standard multiplicative coalescents.
We set
\[
  \bZ(t)=\left( \bZ^1(t),\dots,\bZ^m(t) \right),\quad t \in \R, 
\]
and define 
\begin{equation} 
  \label{equ_limit_process_for_stoc_block_model}
  \bzeta(t,u)=\MCv_{u}\left(\glue\left(\bZ(t)\right);\bA,R_{{\rm inter}}^{(m)}\right), \quad t \in \R ,\ \ u\geq 0,
\end{equation}
where $\bA \stackrel{d}{\sim}\expf$ is independent of $\bZ^l$, $l=1,\dots,m$.

\begin{theorem} 
  \label{the_convergence_of_stochastic_model}
  For every $t \in \R $ and $u\geq 0$,
  \[
    \bzeta^{(n)}(t,u)\to \bzeta(t,u) \quad \mbox{as}\ \ n \to \infty 
  \]
  in distribution with respect to the topology on $\cvd$ .
\end{theorem}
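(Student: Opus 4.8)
The plan is to propagate distributional convergence through the three stages of the graphical construction recorded in \eqref{E:zetantu}, \eqref{equ_limit_process_for_stoc_block_model}: (a) the $m$ ``intra-class'' restricted mergers $\bY^{(n),l}(t):=\MCv_{t_n}(\bx^{(n)};\bA,R_{{\rm intra}}^{(m),n;l})$, $l=1,\dots,m$; (b) the deterministic interleaving $\glue$; (c) the single ``inter-class'' merger $\MCv_{u_n}(\,\cdot\,;\bA',R_{{\rm inter}}^{(m)})$ with independent $\bA'\stackrel{d}{\sim}\expf$. Stage (c) is where Proposition~\ref{pro_continuity_of_mc} enters, and it is the only delicate point.

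First, Stage (a). Since $x^{(n)}_i x^{(n)}_j = n^{-4/3}$ for $i,j\le nm$ and $t_n$ is defined by $1-e^{-n^{-4/3}t_n}=n^{-1}+tn^{-4/3}=:p_n$, the edge set of $\G_{t_n}(\bx^{(n)};\bA,R_{{\rm intra}}^{(m),n;l})$ coincides with that of an Erd\H{o}s--R\'enyi graph $G(n,p_n)$ on the $n$ vertices of $B_l$, while the remaining $n(m-1)$ mass-$n^{-2/3}$ vertices are isolated. Thus $\bY^{(n),l}(t)$ has the law of the decreasingly ordered $n^{-2/3}$-rescaled component sizes of $G(n,p_n)$, augmented by $n(m-1)$ entries equal to $n^{-2/3}$; the augmentation only raises each ordered coordinate and adds exactly $(m-1)n^{-1/3}$ to the squared $l^2$-norm, so $\bY^{(n),l}(t)$ lies within $l^2$-distance $\sqrt{(m-1)n^{-1/3}}\to0$ of the rescaled $G(n,p_n)$ component sizes. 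By Aldous' near-critical theorem \cite{Aldous:1997} the latter converge in $\cvd$ in distribution to $\bX^*(t)\ed\bZ^l(t)$, hence $\bY^{(n),l}(t)\to\bZ^l(t)$ in $\cvd$. For distinct $l$ the $\bY^{(n),l}(t)$ depend on disjoint sub-families of $\bA$ (the $B_l$ being pairwise disjoint) and are therefore independent; combined with the independence of $\bZ^1,\dots,\bZ^m$, this upgrades to joint convergence $\bY^{(n)}(t):=(\bY^{(n),1}(t),\dots,\bY^{(n),m}(t))\to(\bZ^1(t),\dots,\bZ^m(t))=\bZ(t)$ in $(\cvd)^m$.

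Next, Stages (b) and (c). The restriction of $\glue$ to $(\cvd)^m$ maps into $l^2$ and is continuous since $\|\glue(\bx^1,\dots,\bx^m)\|^2=\sum_l\|\bx^l\|^2$; hence $\glue(\bY^{(n)}(t))\to\glue(\bZ(t))$ in $l^2$ in distribution, where $\glue(\bZ(t))\in l^2$ a.s. because each $\bZ^l(t)\in\cvd$ a.s. For Stage (c) we must apply the random map $\MCv_{u_n}(\,\cdot\,;\bA',R_{{\rm inter}}^{(m)})$, with $\bA'$ independent of the intra-class data and $u_n\to u$, to these distributionally convergent inputs. As $l^2$ is Polish, Skorokhod's representation theorem realizes (versions of) $\glue(\bY^{(n)}(t))$ and $\glue(\bZ(t))$ on one space as $\tilde{\bV}^{(n)}\to\tilde{\bV}$ a.s. in $l^2$ (with $\tilde{\bV}\in l^2$ a.s.); taking the product with an independent space carrying $\bA'$ we may assume $\bA'$ independent of $(\tilde{\bV}^{(n)})_n$ and $\tilde{\bV}$. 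For almost every point of the first factor, Proposition~\ref{pro_continuity_of_mc} applied to the deterministic sequence $\tilde{\bV}^{(n)}\to\tilde{\bV}$ and times $u_n\to u$ gives $\MCv_{u_n}(\tilde{\bV}^{(n)};\bA',R_{{\rm inter}}^{(m)})\to\MCv_{u}(\tilde{\bV};\bA',R_{{\rm inter}}^{(m)})$ in $\cvd$ in $\bA'$-probability; since the truncated $\cvd$-metric is bounded by $1$, bounded convergence in the first-factor variable upgrades this to convergence in probability on the product space, hence in distribution. Finally $(\tilde{\bV}^{(n)},\bA')\ed(\glue(\bY^{(n)}(t)),\bA')$ with $\MCv_{u_n}(\glue(\bY^{(n)}(t));\bA',R_{{\rm inter}}^{(m)})=\bzeta^{(n)}(t,u)$ by \eqref{E:zetantu}, and $(\tilde{\bV},\bA')\ed(\glue(\bZ(t)),\bA')$ with $\MCv_{u}(\glue(\bZ(t));\bA',R_{{\rm inter}}^{(m)})=\bzeta(t,u)$ by \eqref{equ_limit_process_for_stoc_block_model}; therefore $\bzeta^{(n)}(t,u)\to\bzeta(t,u)$ in distribution in $\cvd$.

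The main obstacle is precisely Stage (c): Proposition~\ref{pro_continuity_of_mc} is a continuity statement for deterministic spatial inputs with randomness confined to the exponential weights, whereas here the spatial input is itself random, converges only in law, and must be coupled with the carried-along independent weights $\bA'$. The Skorokhod-plus-conditioning device above is what reconciles these. Everything else is routine: the elementary asymptotics $t_nn^{-4/3}=-\ln(1-p_n)\to0$ with $np_n=1+tn^{-1/3}$ (so $p_n$ lies in the critical window) and $u_n\to u$; the $l^2$-negligibility of the isolated vertices; and the continuity of $\glue$.
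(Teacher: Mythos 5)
Your proposal is correct and follows essentially the same route as the paper's proof: identify each intra-class merger with a near-critical Erd\H{o}s--R\'enyi graph and invoke Aldous' Theorem~3 plus independence, pass through the continuous map $\glue$, and then handle the inter-class stage by Skorokhod representation on a product space carrying an independent $\bA'$, applying Proposition~\ref{pro_continuity_of_mc} fiberwise and concluding by dominated (bounded) convergence. The only difference is that you spell out the $l^2$-negligibility of the $(m-1)n$ isolated mass-$n^{-2/3}$ vertices appearing in each $\MCv_{t_n}(\bx^{(n)};\bA,R_{{\rm intra}}^{(m),n;l})$, a point the paper's proof leaves implicit.
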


\begin{proof} %
  In order to prove the theorem, we are going to use the continuity of $\MCv$, stated in Proposition~\ref{pro_continuity_of_mc},
  together with the fact that the standard \MC arises as the scaling limit of near-critical Erd\H{o}s-R\'enyi graph component sizes. 
  Indeed, it is clear that for each $l$ the (deterministic) intersection of $G^m_{n; n^{-1}+ tn^{ -4/ 3 }, un^{-4/3}}$ with $B_l$, where all the inter-class edges are not taken into account, is a realization from $G(n,n^{-1}+ tn^{ -4/ 3 })$.
  In other words, if $\bx^{(n)}$ equals the vector given immediately below \eqref{E:zetantu}, then for each $l$, the random vector
  $$
    \bZ^{l,(n)}(t):=  \MCv_{t_n}\left(\bx^{(n)};\bA,R_{{\rm intra}}^{(m),n;l}\right),
  $$
  multiplied by $n^{2/3}$, is precisely the ordered listing of component sizes of an Erd\H{o}s-R\'enyi graph on $n$ vertices with near-critical connectivity $p_n =  n^{-1}+ tn^{ -4/ 3 }$.
  Moreover, $\{ \bZ^{l,(n)}\}_{l=1}^m$ is clearly an independent family. 
  Let us abbreviate
  \[
    \bZ^{(n)}(t)=\left( \bZ^{1,(n)}(t),\dots,\bZ^{m,(n)}(t) \right).
  \]
We thus rewrite \eqref{E:zetantu} as
\begin{equation} 
  \label{equ_process_z_n}
  \bzeta^{(n)}(t,u)=\MCv_{u_n}\left(\glue\left(\bZ^{(n)}(t)\right);\bA,R_{{\rm inter}}^{(m)}\right), \quad n \geq n_0.
\end{equation}

It remains to show the right hand side of~\eqref{equ_process_z_n} converges in distribution to $\bzeta(t,u)$ as $n \to \infty$.
As a corollary of Theorem~3~\cite{Aldous:1997} and independence, we have that $\bZ^{(n)}(t) \to \bZ(t)$ in $(l^2)^m$ in distribution as $n \to \infty$.
Since $\rho_1^m$ is continuous map, the convergence in law extends to $\rho_1^m(\bZ^{(n)}(t))$. 
The rest is a standard application of continuity of $\MCv$ operation from Section \ref{sec:continuity_of_the_rmm_in_the_spatial_variable}. 
We include an argument for self-containment. 


By the Skorokhod representation theorem, we can choose a probability space $(\Omega,\FF,\p)$ and a sequence of random elements $\hat{\bZ}^{(n)}(t)$, $n\geq n_0$, and $\hat{\bZ}(t)$ in $\left( l^2 \right)^m$ such that 
$$
     \hat{\bZ}^{(n)}(t) \ed \bZ^{(n)}(t) ,\ n\geq n_0,\quad \hat{\bZ}(t) \ed \bZ(t),
$$
and 
  \[
    \hat{\bZ}^{(n)}(t)\to \hat{\bZ}(t) \quad \mbox{in}\ \ \left( l^2 \right)^m\ \ \mbox{a.s.} \ \ \mbox{as}\ \ n \to \infty.
  \]
  We take $\tilde{\bA} \stackrel{d}{\sim}\expf$ defined on another probability space $(\tilde{\Omega},\tilde{\FF},\tilde{\p})$ and set
  \begin{align*}
    \hat{\bzeta}^{(n)}(\omega,\tilde{\omega},t,u_n)&=\MCv_{u_n}\left(\glue\left(\hat{\bZ}^{(n)}(\omega, t)\right);\tilde{\bA}(\tilde{\omega}),R_{{\rm inter}}^{(m)}\right),\quad (\omega,\tilde{\omega})\in\Omega\times\tilde{\Omega},
\\
    \hat{\bzeta}(\omega,\tilde{\omega},t,u)&=\MCv_{u}\left(\glue\left(\hat{\bZ}(\omega, t)\right);\tilde{\bA}(\tilde{\omega}),R_{{\rm inter}}^{(m)}\right),\quad (\omega,\tilde{\omega})\in\Omega\times\tilde{\Omega}.
  \end{align*}
  Then one can conclude $ \bzeta^{(n)}(t,u_n)\ed \hat{\bzeta}^{(n)}(t,u_n)$, $n\geq n_0$, $\bzeta(t,u)\ed \hat{\bzeta}(t,u)$ and 
  \[
    \hat\bzeta^{(n)}(t,u_n)\to \hat\bzeta(t,u)\quad \mbox{in probability as}\ n \to \infty,
  \]
  with respect to the $l^2$ norm on $\cvd$.
  Indeed, for $\eps>0$, we have
  \begin{align*}
    \p\otimes \tilde{\p}&\left\{ \left\|\hat\bzeta^{(n)}(t,u_n)-\hat\bzeta(t,u)\right\|\geq \eps \right\}\\
  &=  \E\left(\tilde{\p}\left\{ \left\|\hat\bzeta^{(n)}(\omega,t,u_n)-\hat\bzeta(\omega,t,u)\right\|\geq \eps  \right\}\right)\\
  &=  \E\Bigg(\tilde{\p}\Bigg\{ \Bigg\|\MCv_{u_n}\left(\glue\left(\hat{\bZ}^{(n)}(\omega, t)\right);\tilde{\bA},R_{{\rm inter}}^{(m)}\right)\\
  &\quad\quad\quad\quad\quad\quad-\MCv_{u}\left(\glue\left(\hat{\bZ}(\omega, t)\right);\tilde{\bA},R_{{\rm inter}}^{(m)}\right)\Bigg\|\geq \eps  \Bigg\}\Bigg)\to 0,
  \end{align*}
  where the convergence a.s.~of the random sequence (of probabilities) inside the expectation is due to  Proposition~\ref{pro_continuity_of_mc}, and the final conclusion due to the dominated convergence theorem.
  As already explained, this completes the proof of the theorem.
\end{proof}

\section{Concluding remarks}
\label{sec:concluding_remarks}

\subsubsection*{Phase transition of the SBM}

%
%

We recall that if $f$ and $g$ are two sequences, then $f(n) \gg g(n)$ (or equivalently $g(n)\ll f(n)$) means that  $\lim_n g(n)/f(n)=0$, and $f(n) \sim g(n)$ means that $\lim_n g(n)/f(n)=1$. 

Let $\comp$ denote the size of largest component of $G(n,p_n,q_n)$. We can conclude from Theorem~\ref{the_convergence_of_stochastic_model} in the previous section that 
\begin{enumerate}
  \item [(i)] if $p_n-\frac{1}{ n }\sim \frac{t}{ n^{4/3} }$ and $q_n\sim \frac{u}{ n^{4/3} }$, $n \to \infty$, then for all $M \in (0,\infty)$ 
    \begin{equation} 
  \label{equ_existence_of_largest_claster}
     \lim_{ n\to\infty }\p\left\{ n^{-2/3}\comp > M \right\} \in  (0,1);
    \end{equation}
\item [(ii)] 
  if $p_n-\frac{1}{ n }\gg \frac{t}{ n^{4/3} }$ without any assumption on $(q_n)_n$, or  $p_n-\frac{1}{ n }\sim \frac{t}{ n^{4/3} }$, $q_n\gg \frac{u}{ n^4/3 }$, $n \to \infty$, then 
for every $M>0$ 
  \[
    \lim_{ n\to\infty }\p\left\{ n^{-2/3}\comp> M \right\}=1;
  \]

\item [(iii)] 
  if $p_n-\frac{1}{ n }\ll \frac{t}{ n^{4/3} }$, $q_n\sim \frac{u}{ n^{4/3} }$, then
for every $M>0$ 
  \[
    \lim_{ n\to\infty }\p\left\{ n^{-2/3}\comp> M \right\}=0.
  \]
  \end{enumerate}

  We remark that in the case $p_n-\frac{1}{ n }\sim \frac{t}{ n^{4/3} }$, $q_n\ll \frac{u}{ n^{4/3} }$, $n \to \infty$, the scaling limit of the stochastic block model $G(n,p_n,q_n)$ is described by a family on $m$ independent standard multiplicative coalescent without interaction. Hence,~\eqref{equ_existence_of_largest_claster} remains true.

We also have from the pure homogenous graph setting that 
if $p_n - \frac{1}{mn} \sim \frac{t}{ n^{4/3}}$ and 
$q_n - \frac{1}{mn} \sim \frac{t}{ n^{4/3}}$,
then the normalized vector of ordered sizes of connected components of $G(n,p_n,q_n)$ converges to a value of standard multiplicative coalescent at time $m^{4/3}t$. In particular,~\eqref{equ_existence_of_largest_claster} is also satisfied.   

In addition, the main result of Bollob\'as et al.~\cite{Bollobas:2007}, applied to the SBM, says that if $p_n \sim \frac{c}{mn}$ and $q_n \sim \frac{d}{mn}$ and
\begin{enumerate}
  \item [(i)] if $c+(m-1) d>m$, then $ \frac{1}{ n }\comp$ converges to a non-zero number in probability.

  \item [(ii)] if $c+(m-1) d\leq m$, then $ \frac{1}{ n }\comp$ converges to zero in probability.
\end{enumerate}

\subsubsection*{Markov property of the \IMC}
Recall the notation of Section \ref{sec:convergence_of_stochastic_block_model} and in particular the construction resulting in \eqref{D:Rt}.
It should be clear that ${\bf R}_t,\ t \geq 0$, is a Markov process.
However, for any fixed $u>0$ the process
$$
\MCv_u\left({\bf R}_t;\bA', R_{{\rm inter}}^{(m)}\right),\ t\geq 0,
$$
does no longer have the Markov property. 
The main obstacle is in the ``loss of information'' on the class membership once the restricted merging $\MCv_u$ is applied. 
For the same reason, for any fixed $t$, the process 
$$
\MCv_u\left({\bf R}_t;\bA', R_{{\rm inter}}^{(m)}\right),\ u\geq 0,
$$
is no longer Markov.
These observations are made on the discrete level, before passing to the limit.
The same remains true for the \IMC.

Nevertheless, the first process above and its scaling limit, given in Section \ref{sec:convergence_of_stochastic_block_model}, are not far from being Markov (they are hidden Markov), and they are still amenable to analysis. 
In a forthcoming work~\cite{Konarovskyi:EIMC:2020}
we construct an excursion representation of \IMC, analogous to those obtained by \cite{Aldous:1997,Aldous:1998,Armendariz:2001,Broutin:2016,Martin:2017,Limic:2019}, however more complicated, and its complexity increases with $m$.



\appendix 

\section{Appendix}
This auxiliary material is included for reader's benefit. The multiplicative coalescent properties proved below are interested in their own right, and our intention is to obtain their generalizations in a separate work in progress~\cite{Konarovskyi:MMC:2020}.

\subsection{Preliminaries}%
\label{sec:preliminaries}

We rely on the notation introduced above. 
In particular, if $\bx$ is a vector in $l^2$ or $\cvd$, then $\|\bx\|$ is its $l^2$-norm.
We reserve the notation $\bX:=(\bX(t), t\geq 0)$ for any \MC\ process, where its initial state will be clear from the context.
Recall that $\bX(t)=(X_1(t),X_2(t),\ldots)$ where $X_j(t)$ is the size of the $j$th largest component at time $t$.

If $n\in \N$ then $[n]=\{1,2,\ldots,n\}$. Here and below $\bA$  denotes a matrix (or equivalently, a two-parameter family) of i.i.d.~exponential (rate $1$) random variables.

Let $(G_t(\bx;\bA,R^*))_{t,\bx}$ be the family of evolving random graphs on $(\Omega,\FF,\p)$ as constructed in the introduction. We now fix $\bx \in l^2$ and $t>0$, and describe a somewhat different construction of the random graph $G_t(\bx;\bA,R^*)$.

Set $\N^2_<:=\left\{ (i,j):\ i<j,\ i,j \in \N \right\}$ and
\[
  \Omega^0=\{ 0,1 \}^{\N^2_<}.
\]
We also define the product $\sigma$-field $\FF^0=2^{\Omega^0}$ and the product measure
\[
  \p_{\bx,t}^0=\bigotimes_{ i<j } \p_{i,j},
\]
where $\p_{i,j}$ is the law of a Bernoulli random variable with success probability $\p_{i,j}\{1\}=\p\left\{ \bA_{i,j}\leq x_ix_jt \right\}$.
Elementary events from $\Omega^0$ will specify a family of open edges in $G_t(\bx;\bA,R^*)$.
More precisely, a pair of vertices $\{i,j\}$ is connected in $G_t(\bx;\bA,R^*)$ by an edge if and only if $\omega_{i,j}=1$ for $\omega=(\omega_{i,j})_{i<j} \in \Omega^0$. 
In other words, $\p_{\bx,t}^0$ is an ``inhomogeneous percolation process on the complete infinite graph $(\N,\{\{i,j\}: i,j\in \N\})$'' (we include the loops connecting each $i$ to itself on purpose). It is clear that the law of thus obtained random graph $G_t(\bx;\bA,R^*)$ is the same (modulo loops $\{i,i\}$) as the one constructed in the introduction.
Note that $R^*$ is the maximal partition, so these are all graphical constructions of the \MC, equivalent to the Aldous \cite{Aldous:1997} original one.\\
For two $i,j\in \N$ we write $\{i \leftrightarrow j\} = \{\{i,j\} \mbox{ is an edge of } G_t(\bx;\bA,R^*)\}$ and we  may also write it as at $\{\{i,j\} \mbox{ is open}\}$.
We also write $\{i\sim j\}$ for the event that $i$ and $j$ belong to the same connected component of the graph $G_t(x;\bA,R^*)$. Then we have, $\omega$-by-$\omega$, that $i\sim j$ if and only if there exists a finite path of edges 
\[
  i=i_0\leftrightarrow i_1 \leftrightarrow \dots \leftrightarrow i_l=j.
\]

Then one can trivially recognize 
\[
  \p\left\{ i \sim j \right\}=\p_{\bx,t}^0\left\{ i \sim j \right\}.
\]

Part of our argument relies on disjoint occurrence. We follow the notation from ~\cite{Arratia:2018}, since they work on infinite product spaces. We will use an analog of the van den Berg-Kesten inequality~\cite{vandenBerg:1985}, the theorem cited below is an analog of Reimer's theorem~\cite{Reimer:2000}.  
Given a finite family of events $A_k$, $k \in [n]$, from $\FF^0$ we define the event 
\[
\bsq\limits_{k=1}^n A_k=\{A_k,\, k \in [n], \mbox{ jointly occur for disjoint reasons}\}. 
\]
Readers familiar with percolation can skip the next paragraph and continue reading either at Lemma \ref{lem_rkb_inequality} or Section \ref{sec:some_auxiliary_statements}.

Let for $\omega \in \Omega^0$ and $K \subset \N^2_<$
\[
  \Cyl(K,\omega):=\left\{ \bar{\omega}:\ \bar{\omega}_{i,j} =\omega_{i,j},\ (i,j) \in K\right\}.
\]
be the {\it thin cylinder} specified through $K$.
Then the event
\[
  [A]_{K}:=\left\{ \omega:\ \Cyl(K,\omega) \subset A \right\}
\]
is the largest cylinder set contained in $A$, such that it is {\em free in the directions indexed by $K^c$}. Define
\[
  \bsq\limits_{k=1}^n A_k=A_1\sq\dots \sq A_n:=\bigcup_{ J_1,\dots,J_n } [A_1]_{J_1}\cap \dots \cap [A_n]_{J_n},
\]
where the union is taken over finite disjoint subsets $J_k$, $k \in [n]$, of $\N^2_<$.

Let $i_k,j_k \in \N$ and $i_k \not= j_k$, $k \in [n]$. Then we have clearly
\[
  \bsq_{k=1}^n\{ i_k\sim j_k \}=\left\{ \!\!\!
  \begin{array}{r}
  	i_k\sim j_k,\ k \in [n],\ \mbox{via mutually disjoint paths}\!\!\!
  \end{array}
  \right\}.
\]

The following lemma  follows directly from Theorem~11~\cite{Arratia:2018}, but since the events in question are simple (and monotone increasing in $t$) this could be derived directly in a manner analogous to \cite{vandenBerg:1985}.

\begin{lemma} 
  \label{lem_rkb_inequality}
  For any $i_k,j_k \in \N$ and $i_k \not= j_k$, $k \in [n]$, we have
  \[
    \p_{\bx,t}^0\left( \bsq_{k=1}^n \{i_k\sim j_k\} \right)\leq \prod_{ k=1 }^{ n } \p\left(i_k\sim j_k \right).
  \]
\end{lemma}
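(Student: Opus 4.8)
The plan is to reduce Lemma~\ref{lem_rkb_inequality} to the version of the van den Berg--Kesten inequality on infinite product spaces proved in \cite{Arratia:2018} (their Theorem~11, which is the Reimer-type statement). The only genuine work is to check that the events $\{i_k\sim j_k\}$ fit into the hypotheses of that theorem. First I would recall that, under the percolation description via $(\Omega^0,\FF^0,\p_{\bx,t}^0)$ given just above, each event $\{i\sim j\}$ is an increasing event: adding open edges can only create, never destroy, a connecting path. So $\{i_k\sim j_k\}$, $k\in[n]$, is a finite family of increasing events in a product of (infinitely many) two-point spaces, which is exactly the framework of \cite{Arratia:2018}.

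Next I would make precise that the disjoint-occurrence event $\bsq_{k=1}^n\{i_k\sim j_k\}$ coincides with the event that all the connections $i_k\sim j_k$ hold ``via mutually edge-disjoint paths''. This is already stated in the excerpt just before the lemma, but in the proof I would spell out the easy inclusion: if witnesses $J_k$ (finite, pairwise disjoint subsets of $\N^2_<$) certify $[ \{i_k\sim j_k\}]_{J_k}$ simultaneously, then for each $k$ one can extract from the edges forced open inside $J_k$ a path from $i_k$ to $j_k$, and these paths use edges from the disjoint sets $J_k$, hence are edge-disjoint; conversely, edge-disjoint open paths give such witnessing sets $J_k$ (take $J_k$ to be the edge set of the $k$-th path). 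Either way, the set-theoretic identity holds $\omega$-by-$\omega$.

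With those two observations, Theorem~11 of \cite{Arratia:2018} applies verbatim and yields
\[
\p_{\bx,t}^0\!\left(\bsq_{k=1}^n\{i_k\sim j_k\}\right)\le \prod_{k=1}^n \p_{\bx,t}^0\{i_k\sim j_k\},
\]
and then I would simply invoke the already-noted identity $\p\{i\sim j\}=\p_{\bx,t}^0\{i\sim j\}$ to rewrite the right-hand side as $\prod_k \p(i_k\sim j_k)$, which is the claim. As an alternative (sketched as the remark in the statement suggests), since the events are monotone increasing one can avoid Reimer's theorem entirely and run the classical van den Berg--Kesten coupling/induction argument of \cite{vandenBerg:1985} directly: condition on the states of edges incident to $i_1$, reveal the cluster of $i_1$, and peel off one event at a time, using monotonicity to control the conditional probabilities.

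The main obstacle, such as it is, is purely bookkeeping on the infinite product space: one must make sure that ``disjoint reasons'' is interpreted with the \emph{thin cylinder} $[\cdot]_K$ / finite witness sets $J_k$ as in \cite{Arratia:2018} (rather than some naive finite-dimensional notion), and that the connectivity events, although not themselves cylinder events, are nonetheless increasing limits of cylinder events so that the Arratia--Garibaldi--Hurlbert machinery is legitimately applicable. Once that identification is in place there is nothing further to prove.
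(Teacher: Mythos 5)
Your proposal is correct and takes essentially the same route as the paper: the paper's proof is a one-line citation of Theorem~11 of \cite{Arratia:2018}, together with the remark (which you also make) that since the events are monotone increasing one could instead argue directly in the manner of \cite{vandenBerg:1985}. The additional bookkeeping you supply --- identifying $\bsq_{k=1}^n\{i_k\sim j_k\}$ with connection via mutually edge-disjoint paths and checking that the connectivity events fit the infinite-product framework --- is precisely the detail the paper leaves implicit.
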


\subsection{Some auxiliary statements}%
\label{sec:some_auxiliary_statements}


Recall Lemma \ref{lem_estimation_for_probability_of_connection_of_two_enges}.
The goal of this section is to obtain a similar estimate for triples and four-tuples of vertices.

\begin{proposition} 
  \label{pro_estimate_for_connected_componnents}
  There exists a constant $C$ such that for every $\bx=(x_k)_{k\geq 1} \in l^2$ and $t \in (0,1/\|\bx\|^2)$
  \[
    \p\left( i_1\sim i_2 \sim i_3\right)\leq C\frac{ x_{i_1}x_{i_2}x_{i_3}t^{3/2} }{ \left(1-t \|\bx\|^2\right)^3 }
  \]
  and 
  \[
    \p\left( i_1\sim i_2 \sim i_3\sim i_4\right)\leq C\frac{ x_{i_1}x_{i_2}x_{i_3}x_{i_4}t^2 }{ \left(1-t \|\bx\|^2\right)^5 }
  \]
  for distinct natural numbers $i_k$, $k \in [4]$.
\end{proposition}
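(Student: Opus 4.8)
The plan is to bound the probability that $i_1,i_2,i_3$ (resp.~$i_1,\dots,i_4$) lie in a common connected component by decomposing the connecting structure into a spanning tree on those vertices, realized through disjoint paths, and then applying the van den Berg--Kesten--Reimer-type inequality (Lemma~\ref{lem_rkb_inequality}) together with the two-point estimate of Lemma~\ref{lem_estimation_for_probability_of_connection_of_two_enges}. The key observation is that if $i_1\sim i_2\sim i_3$ then there is a vertex $v$ (a ``branch point'', possibly equal to one of the $i_k$) such that $v\sim i_1$, $v\sim i_2$, $v\sim i_3$ hold \emph{via mutually disjoint paths}; here $v$ ranges over $\N$. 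Summing over $v$ and applying Lemma~\ref{lem_rkb_inequality}, one gets
\[
\p\left(i_1\sim i_2\sim i_3\right)\leq \sum_{v\in\N}\p\left(\bsq_{k=1}^3\{v\sim i_k\}\right)\leq \sum_{v\in\N}\prod_{k=1}^3 \p\left(v\sim i_k\right)\leq \sum_{v}\prod_{k=1}^3 \frac{x_v x_{i_k} t}{1-t\|\bx\|^2}.
\]
The sum over $v$ of $x_v^3$ is bounded by $\|\bx\|^2 \max_v x_v \le \|\bx\|^3$ (or more crudely $\|\bx\|^2\cdot\|\bx\|$), which produces the factor $t^3\|\bx\|^3/(1-t\|\bx\|^2)^3$ — but that has the wrong power of $t$. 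So the refinement needed is to split each path $v\leftrightarrow i_k$ into its first edge $v\leftrightarrow w_k$ and the remainder $w_k\sim i_k$, giving an extra factor $x_v x_{w_k} t$ per branch and allowing one to sum $\sum_v x_v^2 \le \|\bx\|^2$ over the branch point while the three first-edge endpoints $w_k$ absorb the remaining mass; tracking the $t$-powers carefully one sees the shared vertex contributes $x_v^2$ not $x_v^3$, which is exactly what turns $t^3$ into the required $t^{3/2}$ after optimizing (or rather, one uses that $x_v\le \|\bx\|$ on \emph{one} of the three occurrences of $x_v$, trading $x_v$ for $\|\bx\|$ and $t$ for $t^{1/2}$ via $x_v\sqrt t \le \sqrt{t}\,\|\bx\|$, combined with $t\|\bx\|^2<1$). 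For the four-point case the branching structure of a tree on four leaves has either one internal vertex of degree three plus one leaf path, or two internal vertices — in either case one reduces to a sum over one or two branch vertices of products of two-point probabilities, each internal vertex again contributing a summable $x_v^2$, yielding $t^2$ and the denominator power $(1-t\|\bx\|^2)^5$ after accounting for all (at most five) disjoint path segments.

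Concretely I would carry out the steps as follows. First, establish the combinatorial lemma: if $i_1,\dots,i_r$ all lie in one component of $\G_t(\bx;\bA,R^*)$, then there exists a \emph{Steiner-tree-like} subconfiguration, i.e.~a choice of at most $r-2$ additional branch vertices $v_1,\dots$ and a collection of at most $2r-3$ edge-disjoint paths realizing a tree that spans $\{i_1,\dots,i_r\}$ — and crucially the disjointness of the paths lets us invoke Lemma~\ref{lem_rkb_inequality}. Second, enumerate the finitely many tree topologies (one for $r=3$, two for $r=4$) and, for each, write the union bound over the branch vertices. Third, bound each resulting multiple sum: expand each two-point probability using Lemma~\ref{lem_estimation_for_probability_of_connection_of_two_enges} (or its one-edge-plus-remainder refinement), pull out the fixed factors $x_{i_1}\cdots x_{i_r}$, and bound the remaining sums over free vertices by repeated use of $\sum_k x_k^2=\|\bx\|^2$ and $\sup_k x_k\le\|\bx\|$, each inequality of the latter type converting one power of $t$ into $t^{1/2}$. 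Finally, collect the powers of $t$ and of $(1-t\|\bx\|^2)^{-1}$ and absorb all the numerical constants (number of topologies, number of path segments) into the single constant $C$.

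The main obstacle I anticipate is the bookkeeping in the second and third steps: making the Steiner-tree decomposition precise enough that the paths are genuinely edge-disjoint (so that Lemma~\ref{lem_rkb_inequality} applies with the right number of factors), while simultaneously keeping the exponent of $t$ from coming out too large. The subtlety is that a naive decomposition into disjoint paths spends one factor $x_v t$ per path-edge and hence too much $t$; the right trade-off — using $x_v\le\|\bx\|$ to demote one $x_v$ per branch vertex and recover $t^{1/2}$ per demotion — must be applied in just the right places, and I would expect to need to be careful that the final answer does not blow up the exponent of $(1-t\|\bx\|^2)^{-1}$ beyond $3$ (resp.~$5$). A secondary point is that $v$ (or $w_k$) is allowed to coincide with one of the $i_k$'s or with each other in degenerate configurations; these boundary cases must be folded in, but they only \emph{improve} the bound (fewer path segments, hence fewer factors), so listing them separately and absorbing them into $C$ suffices.
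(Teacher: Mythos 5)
Your proposal is correct and follows essentially the same route as the paper: enumerate the spanning-tree topologies on the marked vertices (with branch points possibly among them), realize each via mutually disjoint paths so that Lemma~\ref{lem_rkb_inequality} applies, plug in the two-point bound of Lemma~\ref{lem_estimation_for_probability_of_connection_of_two_enges}, and absorb the surplus factors via $\sum_v x_v^p\le\|\bx\|^p$ and $t\|\bx\|^2<1$. Your initial worry that $t^3\|\bx\|^3$ has ``the wrong power of $t$'' is unfounded --- $t^3\|\bx\|^3=t^{3/2}\left(t\|\bx\|^2\right)^{3/2}\le t^{3/2}$ directly, so the first-edge-splitting refinement is unnecessary and the mechanism you settle on in the parenthetical is exactly what the paper uses.
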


\begin{remark}
We conjecture analogous estimates for $k$-tuples of vertices.
\end{remark}


\begin{proof}[Proof of Proposition~\ref{pro_estimate_for_connected_componnents}] %
  We will focus on the proof of the second inequality. The proof of the first one is similar and simpler. Let $I:=\{i_1,\dots,i_4\}$ and $I^c=\N \setminus I$. We consider $\{i_1\sim\dots\sim i_4\}$ as an event in the probability space $(\Omega^0,\FF^0,\p^0_{\bx,t})$ and observe that it can be written as a union of the following five events:
  \begin{align*}
    A_1:&= \bigcup_{ \sigma }\bigcup_{ \substack{k,l \in I^c\\ k \not=l} }\{ i_{\sigma(1)}\sim k \}\sq\{ k\sim i_{\sigma(2)} \}\sq\{ i_{\sigma(3)}\sim l \}\sq\{ l\sim i_{\sigma(4)} \}\sq\{ k\sim l \},\\
    A_2:&= \bigcup_{ \sigma }\bigcup_{ k \in I^c } \{ i_{\sigma(1)} \sim i_{\sigma(2)} \}\sq \{ i_{\sigma(1)}\sim k \}\sq\{ i_{\sigma(3)}\sim k \}\sq\{ k\sim i_{\sigma(4)} \},\\
    A_3:&= \bigcup_{ \sigma }\bigcup_{ k \in I^c } \{ i_{\sigma(1)}\sim k \}\sq\{ k\sim i_{\sigma(2)}\}\sq\{ i_{\sigma(3)}\sim k \}\sq \{ k\sim i_{\sigma(4)} \},\\
    A_4:&=\bigcup_{ \sigma } \{ i_{\sigma(1)}\sim i_{\sigma(2)} \}\sq\{ i_{\sigma(1)}\sim i_{\sigma(3)} \}\sq\{ i_{\sigma(1)}\sim i_{\sigma(4)} \},\\
    A_5:&=\bigcup_{ \sigma } \{ i_{\sigma(1)}\sim i_{\sigma(2)} \}\sq\{ i_{\sigma(2)}\sim i_{\sigma(3)} \}\sq\{ i_{\sigma(3)}\sim i_{\sigma(4)} \},
  \end{align*}
  where the unions $\bigcup_{ \sigma }$ are taken over all the permutations $\sigma\in S_4$. The following pictures are the graphical representation of events presented in definition of $A_k$, $k \in [5]$, for $\sigma(i)=i$.
   The long double arrows illustrate connections by paths.
 
  \begin{center}
    \includegraphics[height=37mm]{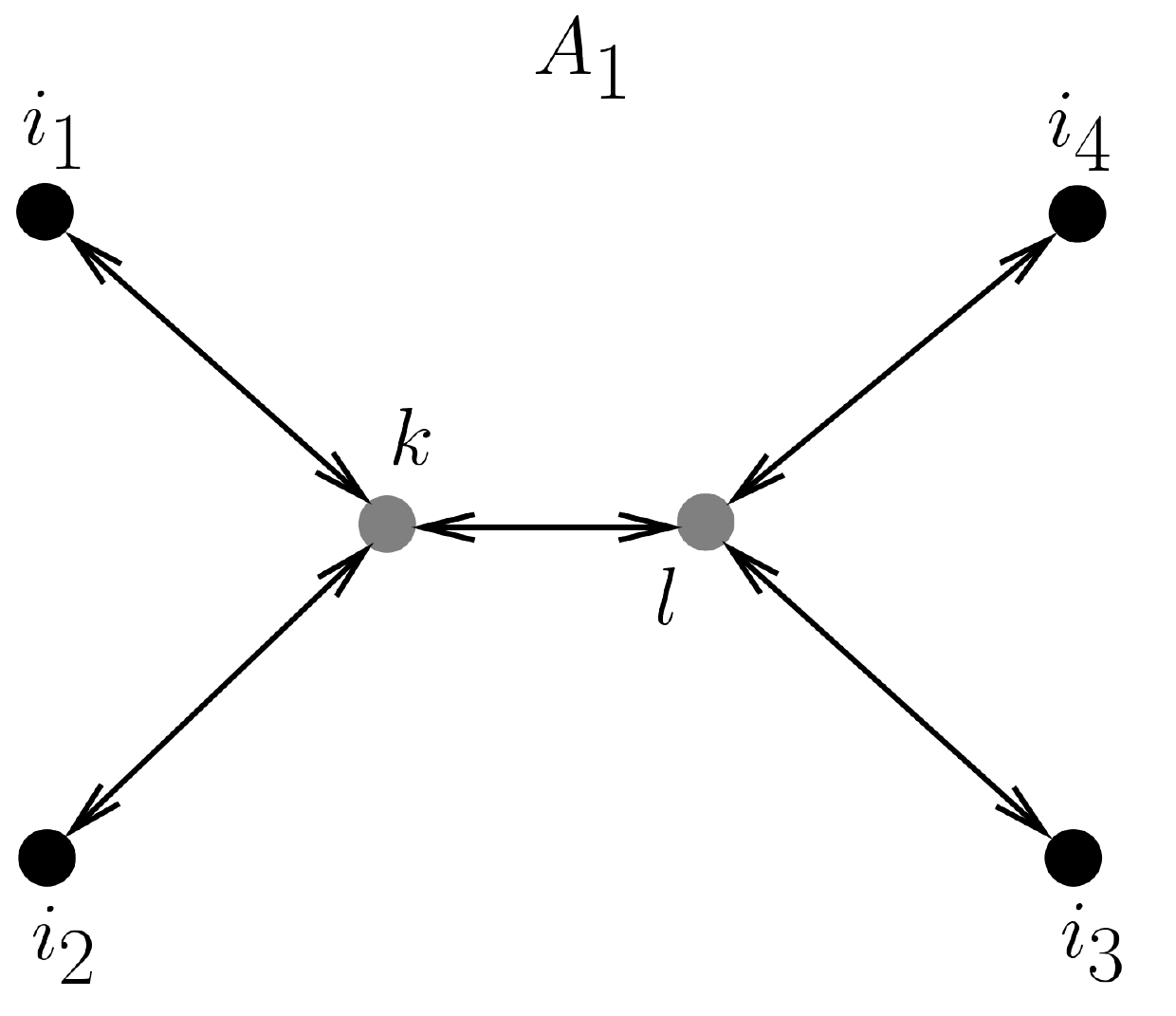}\quad
    \includegraphics[height=37mm]{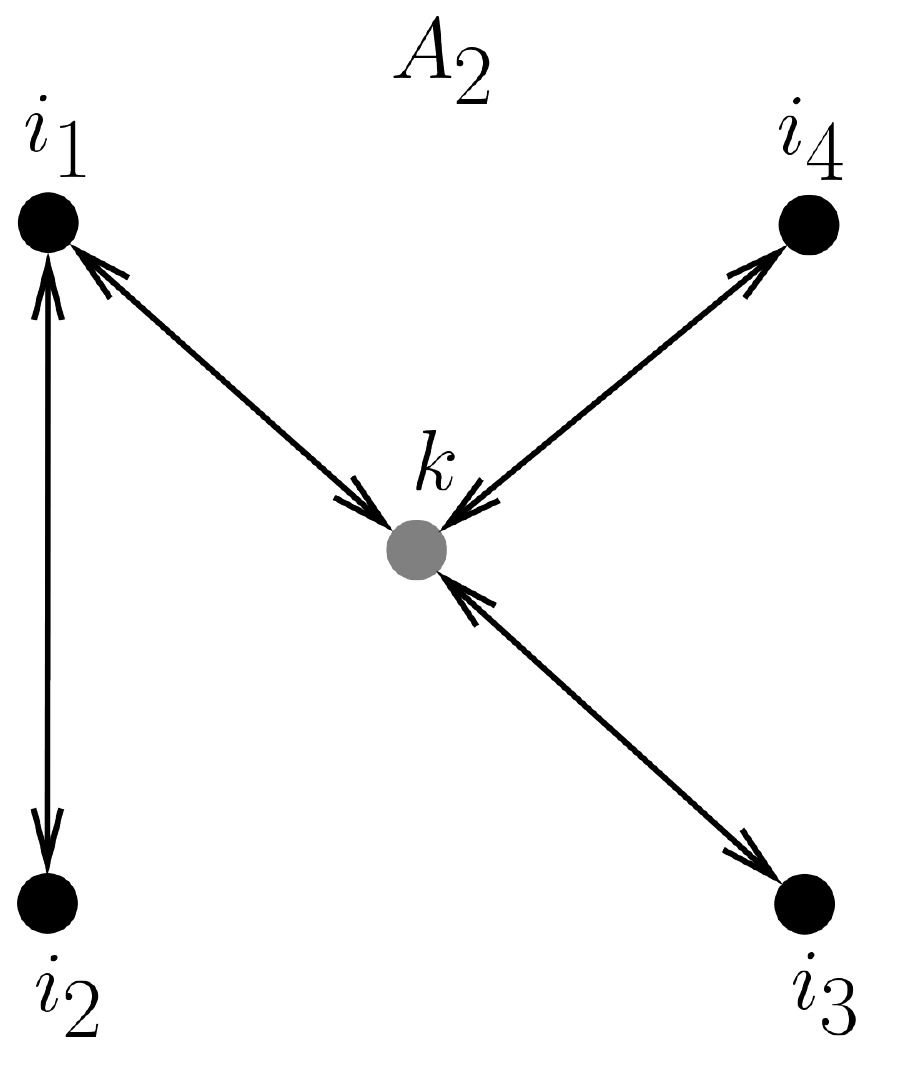}\quad
    \includegraphics[height=37mm]{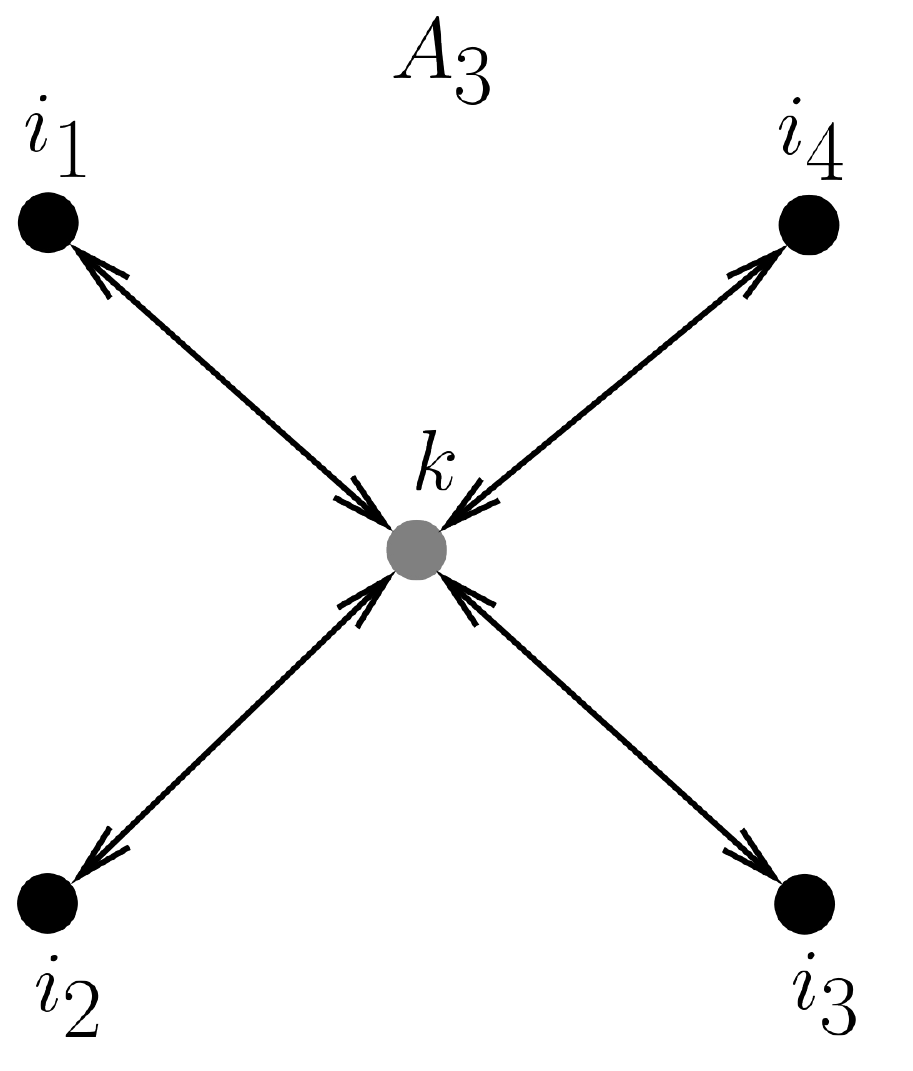}
    \vspace{4mm}

    \includegraphics[height=37mm]{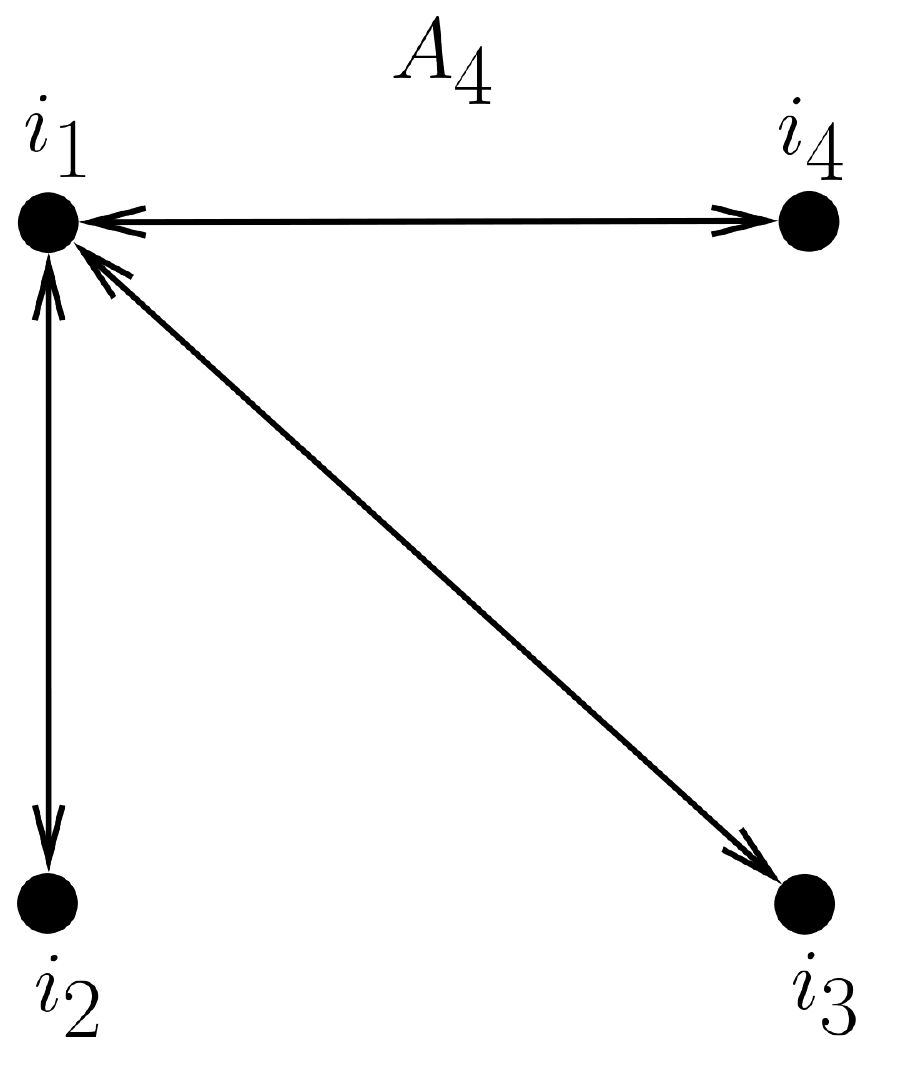}\quad
    \includegraphics[height=37mm]{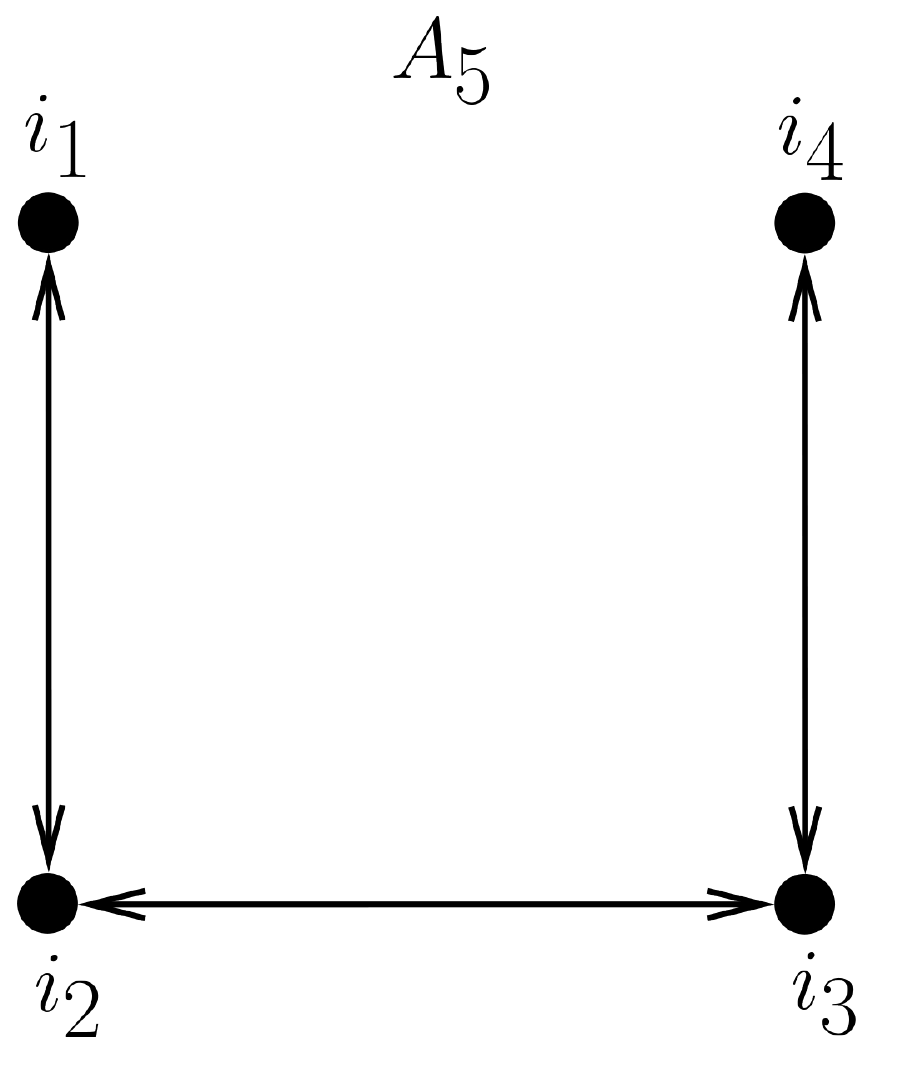}
  \end{center}
  Using Lemma~\ref{lem_rkb_inequality} and the inequalities $t \|\bx\|^2<1$ and \ $\sum_{ k=1 }^{ \infty } x_k^p\leq \|\bx\|^p$ (recall that $\| \cdot\|$ is the norm in $l^2$) for every $p\geq 2$, we can now estimate 
  \begin{align*}
    \p_{\bx,t}^0\left( A_1 \right)\leq \sum_{ \sigma }\frac{ x_{i_1}x_{i_2}x_{i_3}x_{i_4}t^5 }{ (1-t \|\bx\|^2)^5 } \sum_{ k,l=1 }^{ \infty }x_k^3x_l^3 \leq 4!\frac{ x_{i_1}x_{i_2}x_{i_3}x_{i_4}t^5 }{ (1-t \|\bx\|^2)^5 } \|\bx\|^6\leq 4!\frac{ x_{i_1}x_{i_2}x_{i_3}x_{i_4}t^2}{ (1-t \|\bx\|^2)^5 }.
  \end{align*}
  Similarly, we obtain (using $x_{i_{\sigma(j)}} \leq \|\bx \|$ repeatedly)
  \begin{align*}
    \p_{\bx,t}^0\left( A_2 \right)&\leq \sum_{ \sigma }\frac{ x_{i_1}x_{i_2}x_{i_3}x_{i_4}t^4 }{ ( 1-t \|\bx\|^2 )^4 }x_{i_{\sigma(1)}}\sum_{ k=1 }^{ \infty } x_k^3\leq 4!\frac{ x_{i_1}x_{i_2}x_{i_3}x_{i_4}t^4 }{ ( 1-t \|\bx\|^2 )^4 }\|\bx\|^4\leq4!\frac{ x_{i_1}x_{i_2}x_{i_3}x_{i_4}t^2 }{ ( 1-t \|\bx\|^2 )^5 },\\
    \p_{\bx,t}^0\left( A_3 \right)&\leq \sum_{ \sigma }\frac{ x_{i_1}x_{i_2}x_{i_3}x_{i_4}t^4 }{ ( 1-t \|\bx\|^2 )^4 }\sum_{ k=1 }^{ \infty } x_k^4\leq 4!\frac{ x_{i_1}x_{i_2}x_{i_3}x_{i_4}t^4 }{ ( 1-t \|\bx\|^2 )^4 }\|\bx\|^4\leq4!\frac{ x_{i_1}x_{i_2}x_{i_3}x_{i_4}t^2 }{ ( 1-t \|\bx\|^2 )^5 },
    \end{align*} 
    \begin{align*}
    \p_{\bx,t}^0\left( A_4 \right)&\leq \sum_{ \sigma }\frac{ x_{i_1}x_{i_2}x_{i_3}x_{i_4}t^3 }{ ( 1-t \|\bx\|^2 )^3 }x_{i_{\sigma(1)}}^2\leq 4!\frac{ x_{i_1}x_{i_2}x_{i_3}x_{i_4}t^3 }{ ( 1-t \|\bx\|^2 )^3 }\|\bx\|^2\leq4!\frac{ x_{i_1}x_{i_2}x_{i_3}x_{i_4}t^2 }{ ( 1-t \|\bx\|^2 )^5 },\\
    \p_{\bx,t}^0\left( A_5 \right)&\leq \sum_{ \sigma }\frac{ x_{i_1}x_{i_2}x_{i_3}x_{i_4}t^3 }{ ( 1-t \|\bx\|^2 )^3 }x_{i_{\sigma(2)}}x_{i_{\sigma(3)}}\leq 4!\frac{ x_{i_1}x_{i_2}x_{i_3}x_{i_4}t^3 }{ ( 1-t \|\bx\|^2 )^3 }\|\bx\|^2\leq4!\frac{ x_{i_1}x_{i_2}x_{i_3}x_{i_4}t^2 }{ ( 1-t \|\bx\|^2 )^5 }.
  \end{align*}
  Hence, adding over all the five terms above gives
  \[
    \p\left\{ i_1 \sim i_2\sim i_3 \sim i_4 \right\}\leq 5!\frac{ x_{i_1}x_{i_2}x_{i_3}x_{i_4}t^2 }{ ( 1-t \|\bx\|^2 )^5 },
  \]
as stated.
\end{proof}

\subsection{Finiteness of the fourth moment of the multiplicative coalescent}%
\label{sec:finiteness_of_first_moment}

Let $\bX(t)=\MCv_{t}\left(\bx;\bA,R^*\right)$, $t\geq 0$, be a multiplicative coalescent starting from $\bx \in \cvd$. 
The main goal of this section is to prove the following theorem.

\begin{theorem} 
  \label{the_finiteness_of_fourth_moment}
  For every $t\geq 0$ one has 
  \[
    \E \|\bX(t)\|^4< +\infty.
  \]
\end{theorem}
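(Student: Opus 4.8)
The plan is to expand the fourth moment as a sum over pairs of connected vertices of the graphical construction, and then to bound the connection probabilities that appear. Writing $i\sim j$ for the event that $i$ and $j$ lie in the same connected component of $\G_t(\bx;\bA,R^*)$, one has $\|\bX(t)\|^2=\sum_{v\sim w}x_vx_w$ (sum over ordered pairs $v,w$, diagonal included), so that by Tonelli
\[
  \E\|\bX(t)\|^4=\sum_{i,j,k,l}x_ix_jx_kx_l\,\p\{i\sim j,\ k\sim l\}.
\]
Assume $t>0$, the case $t=0$ being trivial. The observation driving everything is that once one disposes of a finite constant $K=K(t,\bx)$ for which $\p\{i\sim j\}\le Kx_ix_j$, $\p\{i_1\sim i_2\sim i_3\}\le K\prod_s x_{i_s}$ and $\p\{i_1\sim\dots\sim i_4\}\le K\prod_s x_{i_s}$, every summand above acquires a surplus product of $x$'s, leaving sums of the form $\sum x_\cdot^2x_\cdot^2\cdots$, which converge because $\sum_kx_k^p\le\|\bx\|^p$ for $p\ge2$. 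So the real work is to obtain these bounds for \emph{every} $t$; Lemma~\ref{lem_estimation_for_probability_of_connection_of_two_enges} and Proposition~\ref{pro_estimate_for_connected_componnents} deliver them only when $t<1/\|\bx\|^2$.

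First I would prove a version of Lemma~\ref{lem_estimation_for_probability_of_connection_of_two_enges} valid for all $t$, by a heavy/light decomposition of connecting paths. Fix $m=m(t,\bx)$ with $t\|\bx^{[m\uparrow]}\|^2\le1/2$ (possible since $\bx\in l^2$) and call a vertex \emph{heavy} when its index is $\le m$, \emph{light} otherwise. Since $\p\{i\sim j\}$ is the probability that some self-avoiding path from $i$ to $j$ is open, and such a path visits at most $m$ heavy vertices, it splits into at most $m+1$ maximal sub-paths (``light bridges'') delimited by the heavy vertices and by the endpoints $i,j$. Union-bounding over self-avoiding paths, estimating each edge probability $1-e^{-x_vx_wt}$ by $x_vx_wt$, and discarding the self-avoidance constraint \emph{between} distinct bridges, each light bridge contributes at most $\sum_{q\ge1}t^q(\|\bx^{[m\uparrow]}\|^2)^{q-1}=t/(1-t\|\bx^{[m\uparrow]}\|^2)\le2t$ (the sum being over the number $q$ of edges of the bridge and its $q-1$ interior light vertices), a path with $r$ interior heavy vertices has exactly $r+1$ bridges, the sum over distinct heavy interior $r$-tuples of $\prod_s x_{h_s}^2$ is at most $\|\bx\|^{2r}$, and the endpoint masses $x_ix_j$ factor out. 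This gives $\p\{i\sim j\}\le K_1x_ix_j$ with $K_1=\sum_{r=0}^m(2t)^{r+1}\|\bx\|^{2r}<\infty$, uniformly in $i,j$. Substituting this for Lemma~\ref{lem_estimation_for_probability_of_connection_of_two_enges} in the proof of Proposition~\ref{pro_estimate_for_connected_componnents} (which otherwise uses only Lemma~\ref{lem_rkb_inequality} and $\sum_kx_k^p\le\|\bx\|^p$) then yields the three- and four-point bounds $\p\{i_1\sim i_2\sim i_3\}\le K_2\prod_s x_{i_s}$ and $\p\{i_1\sim\dots\sim i_4\}\le K_3\prod_s x_{i_s}$ with $K_2,K_3<\infty$, for \emph{all} $t$.

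Then I would finish by a case analysis on the coincidences among $i,j,k,l$. If all four are distinct, then according to whether or not $i\sim k$ one has $\{i\sim j,\ k\sim l\}\subseteq\{i\sim j\sim k\sim l\}\cup\big(\{i\sim j\}\sq\{k\sim l\}\big)$: the first event is handled by the four-point bound, and the second one---working on the percolation space $(\Omega^0,\FF^0,\p_{\bx,t}^0)$ from the appendix---by Lemma~\ref{lem_rkb_inequality} together with the two-point bound. Hence $\p\{i\sim j,k\sim l\}\le(K_3+K_1^2)x_ix_jx_kx_l$, and this portion of the sum is at most $(K_3+K_1^2)\|\bx\|^8$. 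Each of the finitely many remaining patterns of equalities among $i,j,k,l$ collapses the event, by transitivity of $\sim$, to a two- or three-point connection event (or to a sure event when one of the pairs degenerates), so the corresponding partial sum is bounded by a finite constant times $\|\bx\|^{2p}$ for some $p\in\{2,3,4\}$. Adding the finitely many contributions gives $\E\|\bX(t)\|^4<\infty$.

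The hard part will be the heavy/light estimate: organising the union bound over self-avoiding paths so that the light vertices remain genuinely summable (this is exactly where $t\|\bx^{[m\uparrow]}\|^2<1$ enters), while the finitely many heavy vertices contribute only a finite, if possibly enormous, constant; and keeping careful track of which vertices enter with weight $x$ (the two endpoints of the whole path) versus $x^2$ (every other vertex), as well as of the number of bridges. The concluding case analysis is routine but somewhat tedious.
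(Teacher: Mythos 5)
Your argument is correct, but it reaches the conclusion by a genuinely different route than the paper. The paper never proves all-$t$ connection bounds: it first bounds $\sum_k \E X_k^4(t)$ and $\E\|\bX(t)\|^4$ only for $t<1/\|\bx\|^2$ (Lemma~\ref{lem_finitenes_of_fourth_norm} and Proposition~\ref{pro_finiteness_of_fourth_moment}, the latter exploiting the local martingale \eqref{equ_martingale} to convert the small-time bound on $\sum_k\E X_k^4$ into one on $\E\int_0^t\|\bX(s)\|^4\,ds$), and then extends to arbitrary $t$ by contradiction via a ``grinding'' argument: the first $m$ blocks of $\bx$ are split into many small pieces so that the resulting configuration $\bx^g$ satisfies $t\|\bx^g\|^2<1/2$, and conditioning on the positive-probability event that the grinding is undone by time $t$ recovers the original process, forcing finiteness. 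Your proposal instead upgrades the two-point estimate of Lemma~\ref{lem_estimation_for_probability_of_connection_of_two_enges} to all $t$ by a heavy/light decomposition of self-avoiding paths (the finitely many heavy vertices absorb the divergence of the geometric series, the light tail keeps $t\|\bx^{[m\uparrow]}\|^2<1$), feeds that into the proof of Proposition~\ref{pro_estimate_for_connected_componnents}, and then controls $\p\{i\sim j,\,k\sim l\}$ directly via the four-point bound plus Lemma~\ref{lem_rkb_inequality}; the last step is essential, since the two events are positively correlated and one cannot simply multiply their probabilities. I checked the key computation: a path with $r\le m$ heavy interior vertices contributes $x_ix_j\prod_s x_{h_s}^2$ times $(2t)^{r+1}$ after summing out the light bridges, giving the finite constant $\sum_{r=0}^m(2t)^{r+1}\|\bx\|^{2r}$, and the BKR step is legitimate because on $\{i\not\sim k\}$ the witnessing paths lie in distinct components and are therefore edge-disjoint. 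What each approach buys: yours is more self-contained and elementary (no generator computation, no stopping times, no Fatou, no conditioning on a finite modification) and produces all-$t$ pointwise connection bounds of independent interest; the paper's martingale-plus-grinding scheme avoids any refinement of the path expansion and is a more robust time-extension device, which is presumably why the authors prefer it with a view toward the generalizations announced for \cite{Konarovskyi:MMC:2020}.
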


In order to prove the theorem, we first show the finiteness of the fourth moment of the multiplicative coalescent for small $t$ and then extend this result for all $t$.

\begin{lemma} 
  \label{lem_finitenes_of_fourth_norm}
  There exists a constant $C>0$ such that for every $\bx \in l^2$ and $t \in (0,1/\|\bx\|^2)$ the inequality
  \[
    \sum_{ k=1 }^{ \infty } \E X_k^4(t)< \frac{ C\|\bx\|^4 }{ \left(1-t \|\bx\|^2\right)^5 }
  \]
  holds.
\end{lemma}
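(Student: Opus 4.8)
The plan is to reduce everything to the connection estimates already proved. The starting point is the deterministic identity
\[
\sum_{k\ge 1}X_k^4(t)=\sum_{\mathcal C}\Big(\sum_{i\in\mathcal C}x_i\Big)^4=\sum_{i_1,i_2,i_3,i_4}x_{i_1}x_{i_2}x_{i_3}x_{i_4}\,\I_{\{i_1\sim i_2\sim i_3\sim i_4\}},
\]
where $\mathcal C$ runs over the connected components of $\G_t(\bx;\bA,R^*)$; the last equality holds because the components partition the vertex set (vertices of mass $0$ being irrelevant) and a four-tuple lies in a single component exactly when its entries are pairwise $\sim$-related. All summands being non-negative, Tonelli's theorem gives
\[
\E\sum_{k\ge 1}X_k^4(t)=\sum_{i_1,i_2,i_3,i_4}x_{i_1}x_{i_2}x_{i_3}x_{i_4}\,\p\{i_1\sim i_2\sim i_3\sim i_4\},
\]
an identity valid a priori in $[0,+\infty]$, with finiteness a consequence of the bound to follow.

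Next I would split this quadruple sum over the $15$ set partitions of $\{1,2,3,4\}$ that record which of $i_1,\dots,i_4$ coincide. A block of size $s$ contributes the power $x_j^{s}$ of one \emph{distinct} vertex $j$, and the event $\{i_1\sim\dots\sim i_4\}$ becomes the event that all the distinct vertices lie in a common component. For the single all-equal pattern the probability equals $1$ and the contribution is $\sum_j x_j^4\le\|\bx\|^4$. For the $7$ two-vertex patterns I would use the two-point bound of Lemma~\ref{lem_estimation_for_probability_of_connection_of_two_enges}; for the $6$ three-vertex patterns and the one four-vertex pattern I would use, respectively, the first and the second estimate of Proposition~\ref{pro_estimate_for_connected_componnents}. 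Crucially, the extra factors (of the form $x_j$, or products thereof) supplied by these estimates are precisely what makes the residual sum over the distinct vertices convergent, via $\sum_k x_k^p\le\|\bx\|^p$ for $p\ge 2$ — note that $\sum_k x_k$ itself may be infinite.

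The last step is elementary power counting: after applying the $l^2$-bounds, each non-trivial pattern contributes at most a constant times $\frac{t^{\alpha}\|\bx\|^{4+2\alpha}}{(1-t\|\bx\|^2)^{\beta}}$ with $\alpha\in\{1,3/2,2\}$ and $\beta\le 5$, and since $t\|\bx\|^2<1$ one has $t^{\alpha}\|\bx\|^{4+2\alpha}=(t\|\bx\|^2)^{\alpha}\|\bx\|^4\le\|\bx\|^4$ and $(1-t\|\bx\|^2)^{-\beta}\le(1-t\|\bx\|^2)^{-5}$, so every contribution is at most a constant times $\|\bx\|^4(1-t\|\bx\|^2)^{-5}$. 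Summing the $15$ terms yields the statement with a new constant $C$. I do not expect a genuine obstacle here: with Proposition~\ref{pro_estimate_for_connected_componnents} available the argument is bookkeeping, the only points needing care being the Tonelli interchange, the correct assignment of powers to the distinct vertices within each partition class, and the observation that every surplus power of $\|\bx\|$ is cancelled by a compensating power of $t$ through $t\|\bx\|^2<1$. The substantive input — the three- and four-point connection bounds carrying the right powers of $t$ and of $1-t\|\bx\|^2$ — is exactly the content of Proposition~\ref{pro_estimate_for_connected_componnents}.
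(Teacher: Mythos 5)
Your proposal is correct and follows essentially the same route as the paper: expand $\sum_k X_k^4(t)$ as a quadruple sum of $x_{i_1}x_{i_2}x_{i_3}x_{i_4}\,\p\{i_1\sim i_2\sim i_3\sim i_4\}$, split according to coincidence patterns of the indices, apply Lemma~\ref{lem_estimation_for_probability_of_connection_of_two_enges} to the two-vertex patterns and Proposition~\ref{pro_estimate_for_connected_componnents} to the three- and four-vertex ones, and finish with $\sum_k x_k^p\le\|\bx\|^p$ and $t\|\bx\|^2<1$. The bookkeeping and power counting you describe match the paper's computation exactly.
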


\begin{proof} %
  For convenience of notation we will here use a natural convention that for each $i$ we have $i\sim i$ almost surely, as indicated in Section \ref{sec:preliminaries}. Using Proposition~\ref{pro_estimate_for_connected_componnents} and the fact $t \|\bx\|^2<1$, we estimate 
  \begin{align*}
    \sum_{ k=1 }^{ \infty } \E X_k^4(t)&\leq \sum_{ i_1,i_2,i_3,i_4=1 }^{ \infty } x_{i_1}x_{i_2}x_{i_3}x_{i_4}\p\left( i_1 \sim i_2 \sim i_3 \sim i_4 \right)\\
    &\leq \sum_{ i=1 }^{ \infty } x_i^4+ 12 \sum_{ i_1 \not= i_2 } x_{i_1}x_{i_2}^3 \p\left( i_1 \sim i_2 \right)+6\sum_{ i_1 \not= i_2 } x_{i_1}^2x_{i_2}^2 \p\left( i_1 \sim i_2  \right)\\
    &+ 12\sum_{ i_1 \not= i_2 \not= i_3 } x_{i_1}x_{i_2}x_{i_3}^2\p\left( i_1 \sim i_2 \sim i_3\right)\\
    &+ \sum_{ i_1 \not= i_2 \not= i_3 \not= i_4 } x_{i_1}x_{i_2}x_{i_3}x_{i_4} \p\left( i_1 \sim i_2 \sim i_3 \sim i_4 \right)\\
    &\leq \|\bx\|^4+ 12 \sum_{ i_1 \not= i_2 } x_{i_1}x_{i_2}^3 \frac{ x_{i_1}x_{i_2}t }{ 1-t \|\bx\|^2 }+6\sum_{ i_1 \not= i_2 } x_{i_1}^2x_{i_2}^2 \frac{ x_{i_1}x_{i_2}t }{ 1-t \|\bx\|^2 }\\
    &+ 12C\sum_{ i_1 \not= i_2 \not= i_3 } x_{i_1}x_{i_2}x_{i_3}^2 \frac{ x_{i_1}x_{i_2}x_{i_3}t^{3/2} }{ \left( 1-t \|\bx\|^2 \right)^3 }\\
    &+ C\sum_{ i_1 \not= i_2 \not= i_3 \not= i_3 } x_{i_1}x_{i_2}x_{i_3}x_{i_4} \frac{ x_{i_1}x_{i_2}x_{i_3}x_{i_4}t^2 }{ \left( 1-t \|\bx\|^2 \right)^5 }\leq \|\bx\|^4+ \frac{ 12\|\bx\|^6 t }{ 1-t \|\bx\|^2 }\\
    &+\frac{ 6\|\bx\|^6t }{ 1-t \|\bx\|^2 }+ \frac{ 12C\|\bx\|^7t^{3/2} }{ \left( 1-t \|\bx\|^2 \right)^3 }+ \frac{ C\|\bx\|^8t^2 }{ \left( 1-t \|\bx\|^2 \right)^5 }\leq \frac{ \tilde{C}\|\bx\|^4 }{ \left( 1-t\|\bx\|^2 \right)^5 }.
  \end{align*}
  This finishes the proof of the lemma.
\end{proof}

Recall that the \MC\ $\bX(t)$, $t\geq 0$, is a Markov process taking values in $\cvd$. Using its generator (in particular, applying it to $\| \bX(t) \|^2$) one concludes that the process 
\begin{equation} 
  \label{equ_martingale}
  M(t):=\|\bX(t)\|^2-\int_{ 0 }^{ t } \left( \|\bX(s)\|^4-\sum_{ k=1 }^{ \infty } X^4_k(s) \right)ds, \quad t\geq 0, 
\end{equation}
is a local martingale (see also equality (68) in~\cite{Aldous:1998}). We will use this fact in order to show the finiteness of the fourth moment of the multiplicative coalescent at small times. 

\begin{proposition} 
  \label{pro_finiteness_of_fourth_moment}
  There exists a constant $C$ such that for every $\bx \in l^2$ and $t \in [0,1/\|\bx\|^2)$
  \begin{equation} 
  \label{equ_expectation_of_integral_of_x}
    \E \int_{ 0 }^{ t } \|\bX(s)\|^4ds\leq \frac{ C \|\bx\|^2 }{ \left( 1-t \|\bx\|^2 \right)^4 }. 
  \end{equation}
  In particular, $\E \|\bX(t)\|^4< +\infty$.
\end{proposition}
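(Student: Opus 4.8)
The plan is to derive \eqref{equ_expectation_of_integral_of_x} from the local martingale $M$ of \eqref{equ_martingale} together with the fourth-moment bound of Lemma~\ref{lem_finitenes_of_fourth_norm}. Writing $A(t):=\int_0^t\big(\|\bX(s)\|^4-\sum_{k}X_k^4(s)\big)\,ds$ for the compensator in \eqref{equ_martingale}, note that its integrand is nonnegative, since $\|\bX(s)\|^4=\big(\sum_k X_k^2(s)\big)^2\ge\sum_k X_k^4(s)$; hence $A$ is nondecreasing and continuous. The three ingredients I would assemble are: a crude a priori bound $\E\|\bX(t)\|^2<\infty$ for $t<1/\|\bx\|^2$; the exact identity $\E\|\bX(t)\|^2=\|\bx\|^2+\E A(t)$ obtained by localizing $M$; and the integrated form of Lemma~\ref{lem_finitenes_of_fourth_norm}.

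First I would establish the a priori second moment bound. Using $\|\bX(t)\|^2=\|\bx\|^2+2\sum_{i<j}x_ix_j\,\I_{\{i\sim j\}}$, where $i\sim j$ means that $i$ and $j$ lie in the same component of $\G_t(\bx;\bA,R^*)$, taking expectations, applying Lemma~\ref{lem_estimation_for_probability_of_connection_of_two_enges}, and using $2\sum_{i<j}(x_ix_j)^2\le\|\bx\|^4$, one obtains
\[
  \|\bx\|^2\le\E\|\bX(t)\|^2\le\|\bx\|^2+\frac{t\|\bx\|^4}{1-t\|\bx\|^2}<\infty,\qquad t\in[0,1/\|\bx\|^2).
\]
In particular $\|\bX(t)\|<\infty$ a.s., so $A(t)\le t\|\bX(t)\|^4<\infty$ a.s.\ and $M$ is well defined on $[0,1/\|\bx\|^2)$.

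Next I would turn the local martingale property into an identity. Take a localizing sequence $(\tau_n)$ for $M$ with $\tau_n\uparrow\infty$ a.s.; then $M^{\tau_n}$ is a martingale and, as $M(0)=\|\bx\|^2$ is deterministic, $\E\|\bX(t\wedge\tau_n)\|^2=\|\bx\|^2+\E A(t\wedge\tau_n)$. Letting $n\to\infty$: on the left, $\|\bX(t\wedge\tau_n)\|^2=\|\bX(t)\|^2$ for all large $n$ (a.s.) and $\|\bX(t\wedge\tau_n)\|^2\le\|\bX(t)\|^2\in L^1$ by monotonicity of $s\mapsto\|\bX(s)\|$ (Lemma~\ref{lem_norm_property_of_mc}) and the previous step, so dominated convergence applies; on the right, $A(t\wedge\tau_n)\uparrow A(t)$, so monotone convergence applies. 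This yields $\E\|\bX(t)\|^2=\|\bx\|^2+\E A(t)$ with all terms finite. Since Tonelli and Lemma~\ref{lem_finitenes_of_fourth_norm} give
\[
  \E\int_0^t\sum_{k}X_k^4(s)\,ds\le\int_0^t\frac{C\|\bx\|^4\,ds}{(1-s\|\bx\|^2)^5}=\frac{C\|\bx\|^2}{4}\Big(\frac1{(1-t\|\bx\|^2)^4}-1\Big)<\infty,
\]
I can split $\E A(t)$ and rearrange to $\E\int_0^t\|\bX(s)\|^4\,ds=\E\|\bX(t)\|^2-\|\bx\|^2+\E\int_0^t\sum_k X_k^4(s)\,ds$, which, using $\E\|\bX(t)\|^2\ge\|\bx\|^2$, is at most $\E\int_0^t\sum_k X_k^4(s)\,ds\le\frac{C\|\bx\|^2}{4(1-t\|\bx\|^2)^4}$; relabelling $C$ gives \eqref{equ_expectation_of_integral_of_x}. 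For the final assertion, for $t\in(0,1/\|\bx\|^2)$ I would pick $t'\in(t,1/\|\bx\|^2)$ and use monotonicity of $s\mapsto\|\bX(s)\|$ to get $(t'-t)\E\|\bX(t)\|^4\le\E\int_0^{t'}\|\bX(s)\|^4\,ds<\infty$; the cases $t=0$ and $\bx=0$ are trivial.

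The hard part is the middle step: converting \eqref{equ_martingale} from a local martingale into a genuine expectation identity without circular reasoning. The two facts that legitimize the limit passage are the a priori finiteness $\E\|\bX(t)\|^2<\infty$ (it supplies the dominating function and, a posteriori, forces $\E A(t)<\infty$) and the monotonicity of both the compensator $A$ and of $s\mapsto\|\bX(s)\|$, which turn the stopped identities into clean dominated/monotone convergence limits. Everything else is the quantitative content of Lemma~\ref{lem_finitenes_of_fourth_norm} and an elementary integral.
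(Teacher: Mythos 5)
Your argument follows the paper's proof almost exactly: the same local martingale \eqref{equ_martingale}, localized by stopping times to extract the second-moment identity, combined with the integrated form of Lemma~\ref{lem_finitenes_of_fourth_norm}, an a priori second-moment estimate, and monotonicity of $s\mapsto\|\bX(s)\|$ for the final assertion. (The paper uses the explicit times $\tau_n=\inf\{t:\|\bX(t)\|\ge n\}$ and concludes via Fatou applied to the stopped integrals, rather than first passing to the limit in the identity, but this is cosmetic.)

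One step in your final rearrangement is stated with the wrong inequality direction. From
$\E\int_0^t\|\bX(s)\|^4\,ds=\bigl(\E\|\bX(t)\|^2-\|\bx\|^2\bigr)+\E\int_0^t\sum_kX_k^4(s)\,ds$,
the bound $\E\|\bX(t)\|^2\ge\|\bx\|^2$ only shows that the first summand is nonnegative, so it cannot yield ``at most $\E\int_0^t\sum_kX_k^4(s)\,ds$''; it gives the reverse inequality. What you need is the \emph{upper} bound you already established in your first step, namely $\E\|\bX(t)\|^2-\|\bx\|^2\le t\|\bx\|^4/(1-t\|\bx\|^2)\le\|\bx\|^2/(1-t\|\bx\|^2)^4$, which, added to the bound on $\E\int_0^t\sum_kX_k^4(s)\,ds$, gives \eqref{equ_expectation_of_integral_of_x} after relabelling $C$. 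With that correction the proof is complete and coincides with the paper's.
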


\begin{proof} %
  We set 
  \[
    \tau_n:=\inf\left\{ t:\ \|\bX(t)\|\geq n \right\}, \quad n\geq 1.
  \]
  Then $M(t\wedge \tau_n)$, $t\geq 0$, is a martingale for every $n\geq 1$, where $M$ is defined by~\eqref{equ_martingale}. 
  Consequently,
  \[
    \E M(t\wedge \tau_n)=\E \|\bX(t\wedge\tau_n)\|^2-\E\int_{ 0 }^{ t\wedge\tau_n }\left( \|\bX(s)\|^4 - \sum_{ k=1 }^{ \infty } X^4_k(s)\right)ds = \|\bx\|^2
  \]
  for all $n\geq 1$. 
  
  Using Lemma~\ref{lem_finitenes_of_fourth_norm}, the monotonicity of $\|\bX(t)\|$ in $t$ (see for example Lemma~\ref{lem_norm_property_of_mc})  and the estimate for the second moment of the multiplicative coalescent, 
  which can be obtained in a way similar to the proof of Lemma~\ref{lem_finitenes_of_fourth_norm} (see also Lemma~\ref{lem_uniform_estimation_of_tails}), we get
  \begin{align*}
    \E \int_{ 0 }^{ t\wedge\tau_n } \|\bX(s)\|^4ds&= \E \|\bX(t\wedge\tau_n)\|^2-\|\bx\|^2+ \E\int_{ 0 }^{ t\wedge \tau_n }\sum_{ k=1 }^{ \infty } X_k^4(s)ds\\
    &\leq \frac{ C \|\bx\|^2 }{ 1-t \|\bx\|^2 }- \|\bx\|^2+C \int_{ 0 }^{ t } \frac{ \|\bx\|^4 }{ \left( 1-s \|\bx\|^2 \right)^5 }ds \leq \frac{ C \|\bx\|^2 }{ \left( 1-t \|\bx\|^2 \right)^4 }.
  \end{align*}
  By Fatou's lemma, we derive (\ref{equ_expectation_of_integral_of_x}). 
  The finiteness of $\E \|\bX(t(1-\delta))\|^4$ for any small positive $\delta$ now follows again from the monotonicity of $\E \|\bX(t)\|^4$ in $t$, and this in turn implies the stated claim.
\end{proof}

Let $\bA'=(A_{i,j}')_{i,j}$ be an independent copy of $\bA=(A_{i,j})_{i,j}$. 
As in Section \ref{sec:preliminaries} let $(\omega_{i,j}')_{i<j}$ be an independent family of Bernoulli random variables, where $\omega_{i,j}'$ has success probability $\p_{i,j}'\{1\}=\p\left\{ \bA'_{i,j}\leq x_ix_js \right\}$.
We say that ``$\{i,j\}$ is open via $\bA'$'' on the event $\{\omega_{i,j}'=1\}$. 
Note that this does not exclude $\{\omega_{i,j}=1\}$ from happening.
Similarly we say that ``$\{i,j\}$ is open via $\bA$'' on the event
$\{\omega_{i,j}=1\}$.
We will write $i\leftrightarrow_{\bA}j$ whenever $\{i,j\}$ is open via $\bA$.

Denote by $\tilde{G}_{t,s}\left(\bx;\bA,\bA',R^*\right)$ the graph (in fact, it is a multi-graph) constructed by superimposing the edges open via $\bA'$ onto $G_{t}\left(\bx;\bA,R^*\right)$. 
Elementary properties of independent exponentials imply that the vector of ordered component sizes of $\tilde{G}_{t,s}\left(\bx;\bA,\bA',R^*\right)$ is equal in law to $\MCv_{t+s}\left(\bx;\bA,R^*\right)$.

Furthermore, the following property should be clear: if $i<j$,   
\begin{equation}
\label{equ_graph_equal_dist}
\begin{split}
&\mbox{vector of ordered component sizes of }
\tilde{G}_{t,s}\left(\bx;\bA,\bA',R^*\right) \mbox{ given }
\{i\leftrightarrow_{\bA}j\}  \\
&
\quad\quad{\buildrel d \over =}\ \ \mbox{vector of ordered component sizes of }\tilde{G}_{t,s}\left(\bx^{',i,j};\bA,\bA',R^*\right),
\end{split}
\end{equation}
where $\bx^{',i,j}=(x_1,\dots,x_{i-1},x_i+x_j,x_{i+1},\dots,x_{j-1},0,x_{j+1},\dots)$.
Indeed, one could couple the constructions of the two graphs in \eqref{equ_graph_equal_dist} so that 
if $\{k,l\}\cap \{i,j\}=\emptyset$ one uses the exponential thresholds $\bA_{k,l},\bA'_{k,l}$ 
on both sides, 
$(\bA_{k,l})_{\{k,l\}\cap \{i,j\}\neq\emptyset}$ and 
$(\bA'_{k,l})_{\{k,l\}\cap \{i,j\}\neq\emptyset}$ are used only on the left hand side,
while for $l\not\in \{i,j\}$ the ``combined'' thresholds 
$\bar{\bA}_{i,l}:=(x_i+x_j) (\bA_{i,l}/x_i \wedge \bA_{j,l}/x_j)$
and
$\bar{\bA}_{i,l}':=(x_i+x_j) (\bA'_{i,l}/x_i \wedge \bA_{j,l}/x_j)$
(note that these are again exponential (rate $1$) random variables, independent of each other and of $(\bA_{k,l},\bA'_{k,l})_{\{k,l\}\cap \{i,j\}=\emptyset})$ are used on the right hand side. The $j$th component of $\bx^{',i,j}$ is here set to $0$ out of convenience, and it is clear that this ``fake block'' will not contribute to the connected component masses. It is also clear that the exact (coordinate specified) form of $\bx^{',i,j}$ is irrelevant for the statement in \eqref{equ_graph_equal_dist}, the important thing is that the two masses corresponding to $i$ and $j$ are removed from, and another mass of size $x_j+x_j$ is added to, the configuration.
\begin{remark}
In different words, the reasoning above says that one can construct a realization of connected components of
$\tilde{G}_{t,s}\left(\bx;\bA,\bA',R^*\right) \mbox{ given }
\{i\leftrightarrow_{\bA}j\}$ 
from a realization of $\tilde{G}_{t,s}\left(\bx;\bA,\bA',R^*\right)$ by declaring $\{i,j\}$ being open via $\bA$ and keeping all the other $\bA,\bA'$ thresholds, but now the block which (surely) contains both $i$ and $j$ has mass $x_i+x_j$, and the edges via $\bA$ or $\bA'$, which previously separately connected the blocks indexed by $i$ and $j$ to another block indexed by $l$, can (and must) be combined into a single edge which connects the new merger of $i$ and $j$ to $l$. The fact that these combined edges again give rise to multiplicative merging is the key property which makes such processes amenable to analysis. This is not 
the case if the merging mechanism is different (e.g.~exchangeable, additive, or more complicated).
\end{remark}

\begin{proof}[Proof of Theorem~\ref{the_finiteness_of_fourth_moment}] %
  Our argument by contradiction is analogous to ``finite modification'' reasoning in percolation theory.\\
  Let us assume that there exist $t>0$ and $\bx \in l^2$ such that 
  \begin{equation} 
  \label{equ_assumptin_about_divergence}
    \E \left\|\MCv_{t}\left(\bx;\bA,R^*\right)\right\|^4=+\infty.
  \end{equation}
%
  We take $m,M \in \N$ sufficiently large so that the vector
  \begin{equation}
  \label{def_x_grinded}
    \bx^g=\left( \frac{ x_1 }{ M },\dots,\frac{ x_1 }{ M },\frac{ x_2 }{ M },\dots,\frac{ x_2 }{ M },\dots,\frac{ x_m }{ M },\dots,\frac{ x_m }{ M },x_{m+1},x_{m+2}, \dots \right)\!,
  \end{equation}
  obtained by ``grinding'' the first $m$ components (blocks) of $\bx$ each into $M$ new components (blocks) of equal mass, has sufficiently small $l^2$ norm. More precisely, we take $m,M \in \N$ so that
  \[
    t\left\|\bx^g\right\|^2= t\left(\frac{ x_1^2 }{ M }+ \frac{ x_2^2 }{ M }+\dots+\frac{ x_m^2 }{ M }+ x_{m+1}^2+x_{m+2}^2+\dots\right)< \frac{1}{ 2 }.
  \]
  Then $\E \left\|\MCv_{2t}\left(\bx^g;\bA,R^*\right)\right\|^4< +\infty$ due to Proposition~\ref{pro_finiteness_of_fourth_moment}.

  We next consider the event
  \[
    A=\bigcap_{ l=0 }^{ m-1 } \left\{ (lM+1)\leftrightarrow_{\bA}\dots\leftrightarrow_{\bA} (lM+M)\right\}.
  \]
 In words, the grinding done in \eqref{def_x_grinded} is reversed 
 in $G_{t}\left(\bx;\bA,R^*\right)$
 on $A$.
 It is clear that $A$ has positive probability. 
 Other edges may (and typically will) be open but this can only help the chain of inequalities given below.
 
 Using \eqref{equ_graph_equal_dist} and induction we conclude that
\begin{align*}
&\mbox{vector of ordered component sizes of }
\tilde{G}_{t,\frac{t}{2}}\left(\bx^g;\bA,\bA',R^*\right) \mbox{ given }
A\\
&\quad \ {\buildrel d \over =} \ \ \mbox{vector of ordered component sizes of } \tilde{G}_{t,\frac{t}{2}}\left(\bx;\bA,\bA',R^*\right).
\end{align*}
Due to the reasoning of the paragraph above \eqref{equ_graph_equal_dist}, the vector of ordered connected component masses of the graph on the right-hand side is distributed as $\MCv_{\frac{3t}{ 2 }}\left(\bx;\bA,R^*\right)$.
Denote by $\bY$ the vector of order connected component sizes of 
$\tilde{G}_{t,\frac{t}{2}}\left(\bx^g;\bA,\bA',R^*\right)$, and 
observe that $\bY\ {\buildrel d \over =} \ \MCv_{\frac{3t}{ 2 }}\left(\bx^g;\bA,R^*\right) $
for the very same reason.
Therefore,
 \begin{align*}
\infty > \E\left( \left\|\bY \right\|^4\ \right) &\geq 
\E \left[\E\left( \left\|\bY \right\|^4\ \Big|\ \I_A \right) \I_A\right] = 
\E\left[\E \left\|\MCv_{ \frac{ 3t }{ 2 }}\left(\bx;\bA,R^*\right)\right\|^4\I_A\right]
\\
     &=  \p\left( A \right) \E \left\|\MCv_{ \frac{ 3t }{ 2 }}\left(\bx;\bA,R^*\right)\right\|^4 = \infty,
  \end{align*}
a contradiction. 
\end{proof}

\providecommand{\bysame}{\leavevmode\hbox to3em{\hrulefill}\thinspace}
\providecommand{\MR}{\relax\ifhmode\unskip\space\fi MR }
\providecommand{\MRhref}[2]{%
  \href{http://www.ams.org/mathscinet-getitem?mr=#1}{#2}
}
\providecommand{\href}[2]{#2}


\ACKNO{The first author is very grateful to the Mathematics Department at the University of Strasbourg for their hospitality and for providing him with a friendly and stimulating work environment during his visit in February and March 2020. We wish to thank the anonymous referee for their careful reading of the paper, and in particular for spotting an important gap and a mistake in the previous arguments in Section 2.}


\end{document}